\title{The Law of Large Numbers and CLT for Non-stationary Markov Jump Processes Exhibiting Time-of-Day Effects}
\author{Monte Fischer\thanks{m0nte@stanford.edu} }
\author{Peter W.~Glynn}
\affil{Department of Management Science \& Engineering, Stanford University, Stanford, CA 94305, USA}
\date{October 13, 2025}
\newcommand{\ceil}[1]{\left\lceil #1 \right\rceil}
\theoremstyle{plain}
\newtheorem{theorem}{Theorem}
\newtheorem{proposition}{Proposition}
\theoremstyle{definition}
\numberwithin{condition}{section}
\theoremstyle{remark}
\newtheorem{remark}{Remark}
\newtheoremstyle{assumptionstyle}
  {\topsep}   
  {\topsep}   
  {\normalfont}
  {0pt}       
  {\bfseries} 
  {}          
  {  }         
  {\thmname{#1}~\thmnumber{#2}~} 
\theoremstyle{assumptionstyle}
\newtheorem{assumption}{Assumption}
\newcommand{\p}{\mathbb{P}}
\newcommand{\E}{\mathbb{E}}
\newcommand{\Var}{\mathrm{Var}}
\newcommand{\R}{\mathbb{R}}
\newcommand{\Z}{\mathbb{Z}}
\newcommand{\GG}{\mathscr{G}}
\newcommand{\HH}{\mathscr{H}}
\newcommand{\KK}{\mathscr{K}}
\newcommand{\bigO}{O}
\newcommand{\convp}{\overset{P}{\to}}
\newcommand{\as}{\text{a.s.}}
\newcommand{\convas}{\overset{\as}{\longrightarrow}}
\newcommand{\eqd}{\overset{d}{=}}
\newcommand{\dd}[1]{\frac{d}{d {#1}}}
\newcommand{\pp}[1]{\frac{\partial}{\partial {#1}}}
\newcommand{\ubar}[1]{\underaccent{\bar}{#1}}
\newcommand{\exitrate}{\lambda}
\newcommand{\maxexitrate}{{\bar \lambda}}
\newcommand{\rankonestoch}{\Psi}
\begin{document}

\maketitle

\vspace{-1em}
\noindent\textbf{Keywords:} Markov jump processes with non-stationary transition rates; jump processes with periodic rates; law
of large numbers; central limit theorem; martingales; Poisson's equation; cumulative and lump-sum rewards; resetting models; service operations

\begin{abstract}
In this paper, we develop a general law of large numbers and central limit theorem for cumulative reward processes associated with finite state Markov jump processes with non-stationary transition rates. Such models commonly arise in service operations and manufacturing applications in which time-of-day, day-of-week, and secular effects are of first-order importance in predicting system behavior. Our theorems allow for non-stationary reward environments that continuously accumulate reward, while also including contributions from non-stationary lump-sum rewards of random size that are collected at either jump times of the underlying process, jump times of a Poisson process modulated by the underlying process, or scheduled deterministic times. As part of our development, we also obtain a new central limit theorem for the special case in which the jump process transition rates and reward structure are periodic (as may occur over a weekly time interval), as well as for jump process models with resetting. We include a simulation study illustrating the quality of our CLT approximations for several non-stationary stochastic models.
\end{abstract}

\section{Introduction}
In many service operations settings, time-of-day effects play a first-order role in determining customer quality-of-service and other managerially significant performance objectives. Such settings lead naturally to Markov jump process (or, equivalently, continuous-time Markov chain) models $X = (X(t): t \ge 0)$ in which the transition rates are non-stationary and depend on time-of-day, day-of-week, or seasonal effects. Furthermore, there may be secular effects present that generate long-term changes in customer demand for a product (perhaps induced by a targeted marketing campaign) or changes in the product mix handled by a service center. These effects are represented in the model via a non-stationary specification of the transition rates $Q(t,x,y)$ describing the rate at which $X$ jumps from $x$ to $y$ at time $t$ (for $x \ne y$). This leads to a non-stationary specification of the rate matrices $(Q(t): t \ge 0)$ that describe the transition dynamics of $X$ as a function of time $t$. The applied importance of this non-stationary environment is reflected in the survey of \cite{whitt2018time}, focused on theory relevant to the analysis of queueing models.

In this paper, we develop the law of large numbers (LLN) and central limit theorem (CLT) for general cumulative reward processes associated with $X$. These results provide managerially useful approximations to the distribution of the total reward for a system accumulated over a given long decision horizon $[0,t]$, so that, for example, one can compute the probability that total profit will exceed some break-even value. We permit the rate $r(t,x)$ at which reward accumulates in state $x$ to depend on $t$ itself. We also allow for ``jump time'' lump-sum rewards that are received when $X$ jumps from state $x$ to state $y$ at time $T$, yielding an instantaneous random reward of $\mathscr G(T,x,y)$. In addition, we permit ``scheduled rewards'' that are collected at deterministic times (e.g.~dividends or quarterly payments) through the use of random variables (rv's) $\HH(t_i, x)$ to denote the lump-sum reward received at time $t_i$ when $X$ occupies state $x \in S$. Finally, we let $N = (N(t): t \ge 0)$ be a non-stationary Poisson process driven by $X$ with intensity $(\beta(t, X(t)): t \ge 0)$ and let $\KK(\Lambda, x)$ be an ``external reward'' collected at the Poisson event time $\Lambda$ when $X$ occupies state $x \in S$. The rv's $\GG(T,x,y)$, $\HH(t_i,x)$, and $\KK(\Lambda,x)$ have corresponding cumulative distribution functions (cdf's) $G(T,x,y,\cdot)$, $H(t_i,x,\cdot)$, and $K(\Lambda,x,\cdot)$. Because we allow $r(t,x)$, $\GG(t,x,y)$, $\HH(t,x)$, and $\KK(t,x)$ to depend on $t$, our LLN and CLT will provide new approximations even for Markov jump processes with stationary transition rates, for which there is a single rate matrix $Q$ such that $Q(t) \equiv Q$ for $t \ge 0$. By permitting the cdf $G(T, x, y, \cdot)$ to possess a jump discontinuity at 0, we can model situations in which jump-time rewards are generated stochastically at a subsequence of the jump times of $X$.

Let $T_j$ be the time of the $j$th jump of $X$ and set $Y_j = X(T_j)$ for $j \geq 1$, with $Y_0 = X(0)$; we take $X$ to be right continuous throughout this paper. We develop approximations for the cumulative reward
\begin{equation}
\label{eqn:reward_functional}
    R(t) \triangleq \int_0^t r(s, X(s)) ds + \sum_{j=1}^{J(t)} \GG(T_j, Y_{j-1}, Y_j) + \sum_{i=1}^{n(t)} \HH(t_i, X(t_i)) + \sum_{k=1}^{N(t)} \KK(\Lambda_k, X(\Lambda_k)),
\end{equation}
where $J(t)$ is the total number of jumps of $X$ over $[0,t]$, $N$ has event times $(\Lambda_k : k \ge 1)$, and $0 = t_0 < t_1 < t_2 < \dots < t_{i-1} < t_i < \dots$ is a sequence of deterministic times for which $t_i \to \infty$, with $n(t) \triangleq \sup\{ i \ge 0: t_i \le t\}$. 

Our law of large numbers (Theorem \ref{thm:slln}) shows that
\begin{equation}
\label{eqn:intro_lln}
    \frac{R(t)}{\E R(t)} \convas 1
\end{equation}
as $t \to \infty$. Our central limit theorem (Theorem \ref{thm:clt}) refines $\eqref{eqn:intro_lln}$ to 
\begin{equation}
\label{eqn:intro_clt}
    \frac{R(t) - \E R(t)}{\sqrt{\Var R(t)}} \Rightarrow \mathcal{N}(0,1)
\end{equation}
as $t \to \infty$, where $\mathcal{N}(0,1)$ is a standard normal rv with mean 0 and unit variance, and $\Rightarrow$ denotes weak convergence. In view of \eqref{eqn:intro_clt}, we can then approximate the distribution of $R(t)$ as
\begin{equation*}
    R(t) \overset{D}{\approx} \E R(t)+ \sqrt{\Var R(t)} \mathcal{N}(0,1)
\end{equation*}
when $t$ is large, and $\overset{D}{\approx}$ denotes ``has approximately the same distribution as'' and has no rigorous meaning other than that carried by \eqref{eqn:intro_clt}. This result is intended to support the view that even in the non-stationary setting, one can reasonably expect, in great generality, that cumulative rewards are approximately normal when the time horizon is large.

A significant mathematical complication in the non-stationary setting is that one cannot typically expect that $\E R(t) \sim mt$ or $\Var R(t) \sim \sigma^2 t$ as $t \to \infty$ for some values $m$ and $\sigma^2$, where $a(t) \sim b(t)$ as $t \to \infty$ means that $a(t) / b(t) \to 1$ as $t \to \infty$. For example, $Q(t)$ could alternate between $Q_1$ on intervals of the form $[2^{2i}, 2^{2i+1}]$ and $Q_2$ on $[2^{2i+1}, 2^{2i+2}]$ for $i \geq 0$. However, our assumptions will ensure that $\sigma^2(t) = \Theta(t)$ as $t \to \infty$, where $a(t) = \Theta(b(t))$ as $t \to \infty$ means that there exist constants $0 < c_1, c_2, t_0 < \infty$ such that $c_1 |b(t)| \le |a(t)| \le c_2 |b(t)| $ for all $t \ge t_0$. We allow this flexibility in order to communicate the ``model-free'' nature of the LLN and CLT we derive (i.e., we make no assumptions on how $Q(t)$ varies as a function of $t$). We formulate our theory for finite state (rather than infinite state) Markov jump processes. This choice covers many application settings and avoids the technical complications that arise in the infinite state non-stationary setting owing to, for instance, the possibility of alternating periods of recurrence and transience. Our framework accommodates a highly general, non-stationary reward structure, including lump-sum rewards of random size at both jump times and deterministic time points.

The main contributions of this paper are:
\begin{enumerate}[label=(\arabic*)]
\item a general law of large numbers (Theorem \ref{thm:slln}) and central limit theorem (Theorem \ref{thm:clt}) for non-stationary finite state Markov jump processes with non-stationary continuously accruing rewards and lump-sum rewards that accumulate at jump times, external times, and scheduled times, with only measurability assumptions on how $Q(t)$ depends on $t$;

\item a new martingale representation (Section \ref{sec:mtg}) for the cumulative reward $R(t)$;

\item a new CLT (Theorem \ref{thm:clt_periodic}) for periodic Markov jump processes and a discussion of how to compute the centering and scaling constants for the periodic CLT.

\end{enumerate}
Our new contributions also include a discussion of the CLT for non-stationary Markov jump processes with resetting; see Section \ref{sec:reset}. Such an extension is intended to cover non-stationary applied settings in which the work is cleared regularly (e.g. an airport security checkpoint with limited hours where no work is pushed forward from one day into the next). Some recent work has investigated resetting mechanisms as part of server-side control or performance optimization strategies, as in \cite{bonomo2022mitigating, roy2024queues}, but these models treat resetting as a strategic stochastic intervention rather than as a structural feature of the system.

Dobrushin \cite{dobrushin1956centrali,dobrushin1956centralii} developed an early and important contribution to the CLT for non-stationary Markov chains in discrete time, also within the context of ``model-free'' assumptions on the one-step transition probabilities. Notably, these papers introduced the well-known Dobrushin coefficient as a tool for studying the ergodic properties of Markov chains. By contrast, we develop our theory in continuous time and we work with a much richer reward structure than do \cite{dobrushin1956centrali,dobrushin1956centralii}. These papers assume that the Markov chain is real-valued and that the rewards are given by the state values, so that unlike in our setting there is no time dependence permitted in the reward structure itself. In addition, lump-sum rewards do not appear in this discrete time theory. The recent paper \cite{vassiliou2020laws} developed a LLN (but not a CLT) in the discrete-time non-stationary finite state setting. However, \cite{vassiliou2020laws} assumed that the one-step transition matrices converge asymptotically, so that the Markov chain is ``asymptotically stationary''. This is a very strong requirement, especially relative to our model-free assumptions on our rate matrices $Q(t)$, and is often referred to as \textit{strong ergodicity}; see \cite{iosifescu1972two} and \cite{johnson1988conditions} for further discussion. The works \cite{breuer2002markov,vassiliou2020laws} provide LLNs for periodic Markov jump processes and Markov chains, respectively. Previous work by \cite{vassiliou1986asymptotic} analyzed asymptotic (in time) cross-sectional variability in a periodic Markov population process, but did not consider auto-correlation effects across time, as we do. Thus the present work, to our knowledge, provides the first CLT (with associated variance constant) for finite state periodic Markov models, whether in discrete time or continuous time. (We develop our Theorem 5 in continuous time, but its discrete time variant can be analogously derived.)

    To develop our martingale representation, we need to show that the non-stationary Markov jump process we consider ``loses memory'' in the sense that $X(t+s)$ becomes asymptotically independent of $X(t)$ when $s$ is large. Our result (Proposition \ref{prop:mixing}) is a non-stationary analog to the extensive theory on mixing that appears in the literature on stationary stochastic processes; see \cite{bradley2005mixing} for an accessible survey in the discrete-time stationary setting. Similar upper bounds on the rate at which asymptotic loss of memory (also known as \textit{weak ergodicity}) occurs in the setting of finite state non-stationary Markov jump processes have been studied in \cite{zeifman2015two}. These results are non-constructive and require verifying a condition for every rate matrix $Q(t)$, $t \ge 0$. In contrast, Proposition \ref{prop:mixing} supplies an explicit decomposition of the transition matrices with terms that can be evaluated via recursion and may be stated in terms of the uniform lower bounds $\ubar q$ on the rates in our assumption \ref{a3}. 
    
     In order to provide maximum modeling flexibility in our theory (so as to include cases where $Q(t)$ is not continuously differentiable or even continuous in t, as may be required at shift changes in a service center), we develop our theory at a maximum level of generality with regard to the behavior of $Q(t)$ as a function of $t$. We require only measurability (see our assumption \ref{a1}). Foundational analytical questions in the non-stationary continuous-time setting have only recently seen full resolution: the classical work of \cite{feller1940integro} provided conditions under which the Kolmogorov forward and backward equations hold for Markov jump processes with denumerable state spaces. More recently, \cite{ye2008existence,feinberg2014solutions} significantly weakened these conditions by showing existence results that require only measurability of the rate matrices.

    By contrast with the non-stationary theory developed in this paper, there is a large literature on the LLN and CLT for Markov chains and processes when the transition probabilities and rewards are stationary in time.  An early contribution to this literature was that of \cite{gordin1969central}, who showed how martingale ideas can be used to develop CLT's for stationary processes. This was followed by \cite{maigret1978theoreme}, who applied these ideas to Harris recurrent Markov chains in discrete time and made clear the central role of Poisson's equation in this setting. A recent treatment of limit theorems for stationary continuous-time Markov processes, with a focus on the uncountable state space case, was given in \cite{komorowski2012fluctuations}; see also \cite{kulik2017ergodic}. Lyapunov function assumptions guaranteeing the CLT can be found in \cite{glynn2024solution}, under close to minimal conditions. As far as we are aware, there is no prior literature on Markov models with stationary transition probabilities, but non-stationary rewards.

    In order to apply our LLN and CLT results in practical settings, it is necessary to compute the moments $\E R(t)$ and $\Var R(t)$. Vector-valued ordinary differential equations (ODEs) for the computation of $\E [R(t) \mid X(0)=x]$, $\Var [R(t) \mid X(0)=x]$ for $x \in S$ in the case of piecewise continuous non-stationary rates and rewards were developed in \cite{norberg1995differential}. Matrix-valued ODEs to compute moments of the form $\E [R^k(t) I(X(t)=y) \mid X(0)=x]$ for $x,y \in S$ were given in \cite{bladt2020matrix}, covering piecewise continuous non-stationary rates and rewards. Previous literature addressing the case of stationary rates can be found in \cite{glynn1984some,grassmann1987means,bladt2002distributions}. By contrast, our results provide integral equation representations for $\E R(t)$ and $\E R^2(t)$ in the case of measurable non-stationary rates and random lump-sum rewards.

    The computation of the exact distribution of $R(t)$ (as opposed to the asymptotic distribution as expressed through the CLT) for Markov jump processes with stationary transition rates was discussed in \cite{reibman1989markov,sericola1990closed,souza1998algorithm,tijms2000fast}. Such methods involve much more computation than does our CLT. Expressions for the mean and variance of $R(t)$ in the stationary setting were provided in \cite{bladt2002distributions}, covering both time-integrated and jump-time lump-sum rewards, along with two additional numerical methods for computing the exact distribution of $R(t)$ when the reward functional consists only of  time-integrated rewards. In \cite{bladt2020matrix}, these results were extended to cover non-stationary, piecewise continuous rates and rewards. An integro-PDE representation for the reward $R(t)$ in the non-stationary setting with deterministic lump-sum rewards, allowing for scheduled lump-sum rewards, was developed in \cite{hesselager1996probability}; we extend to an integro-PDE for stochastic lump-sum rewards in Section \ref{sec:computational_considerations}.

    This paper is organized as follows. In Section \ref{sec:mixing}, we state the basic assumptions of our theory, and study the mixing properties of the non-stationary Markov jump process $X$. Section \ref{sec:mtg} applies this theory to develop a martingale representation for $R(t)$ based on Poisson's equation that generalizes a representation that is known in the stationary setting. In Sections \ref{sec:lln} and \ref{sec:clt}, we state and prove our main results, namely the LLN and CLT for $R(t)$. We discuss the computation of $\E R(t)$ and $\Var R(t)$ for our general non-stationary reward structure in Section \ref{sec:computation_mean_var}, and discuss computational considerations involved in computing either the approximate (as related to the CLT) or exact distribution of $R(t)$ in Section \ref{sec:computational_considerations}. Section \ref{sec:periodic} focuses on the important special case of non-stationary jump processes in which the rates and rewards are periodic, and the associated simplifications of the asymptotic theory. We discuss non-stationary Markov jump processes with resetting in Section \ref{sec:reset}. The paper concludes with a simulation study that focuses on consideration of the quality of our CLT approximations for several non-stationary stochastic models in Section \ref{sec:numerics}.

\section{Mixing for Markov Jump Processes}
\label{sec:mixing}

We start by stating the assumptions that underlie our theory. Let $X \triangleq (X(t): t \geq 0)$ be an $S$-valued Markov jump process for which $|S| = d  < \infty$. We denote by $Q(t) \triangleq (Q(t, x, y): x,y \in S)$ the rate matrix describing the dynamics of $X$ at time $t$. 
Given $X$, let $N \triangleq (N(t) : t \ge 0)$ be a Poisson counting process with intensity $(\beta(t,X(t)) : t \ge 0)$ and event times $(\Lambda_k : k \ge 1)$. Conditional on $X$ and $N$, we assume that $\big((\GG(T_k, Y_{k-1}, Y_k), \HH(t_k, X(t_k)), \KK(\Lambda_k, X(\Lambda_k))\big) : k \ge 1)$ is a collection of independent rv's for which 
\begin{align*}
    \p(\GG(T_k, Y_{k-1}, Y_k) \le z \mid X, N) &= G(T_k, Y_{k-1}, Y_k, z), \\
    \p(\HH(t_k, X(t_k)) \le z \mid X,N) &= H(t_k, X(t_k), z),\\
    \p(\KK(\Lambda_k, X(\Lambda_k)) \le z \mid X,N) &= K(\Lambda_k, X(\Lambda_k), z),
\end{align*}
where $(G(t,x,y,\cdot) : t \ge 0, x,y \in S)$, $(H(t,x,\cdot) : t \ge 0, x \in S)$, and $(K(t,x,\cdot) : t \ge 0, x \in S)$ are given families of cdf's.
We make the following assumptions about the rate matrices $Q(t)$, the reward rate function $r(t) \triangleq (r(t,x): x \in S)$, the intensity function $\beta$ of the external lump-sum rewards, and the lump-sum reward distributions $G$, $H$, and $K$. Throughout this paper, we adopt the convention that real-valued functions defined on $S$ (such as $r(t, \cdot)$) are column vectors, while probability distributions defined on $S$ are row vectors. 
\begin{assumption}
\label{a1}
    For $x,y \in S$ and $z \in \R$, $Q(\cdot, x,y)$, $r(\cdot, x)$, $G(\cdot,x,y,z)$, $K(\cdot, x,z)$, and $\beta(\cdot, x)$ are measurable.
\end{assumption}
\begin{assumption}
\label{a2}
    $\mathcal{B} \triangleq \{(x,y):  Q(t,x,y) > 0\}$ is independent of $t$ and $Q(1)$ (and hence $Q(t)$) is an irreducible rate matrix. All future references to $\mathcal{B}$ are made under this assumption.
\end{assumption}
\begin{assumption}
\label{a3}
    For each $(x,y) \in \mathcal{B}$,
    \begin{equation*}
        0 < \ubar q(x,y) \triangleq \inf_{t \geq 0} Q(t,x,y) \leq \sup_{t \geq 0} Q(t,x,y) \triangleq \bar q(x,y) < \infty.
    \end{equation*}
\end{assumption}
\begin{assumption}
\label{a4}
There exists a finite-valued function $c = (c(x) : x \in S)$ such that
\begin{align*}
    0 \le \ubar r(x) \triangleq \inf_{t \geq 0} r(t,x) &\leq \sup_{t \geq 0} r(t,x) \triangleq \bar r(x) < \infty, \displaybreak[1] \\
    0 \le \ubar \beta(x) \triangleq \inf_{t \geq 0} \beta(t,x) &\leq \sup_{t \geq 0} \beta(t,x) \triangleq \bar \beta(x) < \infty, \displaybreak[1]\\
    0 = \sup_{t \ge 0} G(t,x,y,0{\scriptstyle -}) &\le \inf_{t \ge 0} G(t,x,y,c(x)) = 1, \displaybreak[1] \\
    0 = \sup_{t \ge 0} H(t,x,0{\scriptstyle -}) &\le \inf_{t \ge 0} H(t,x,c(x)) = 1, \displaybreak[1]\\
    0 = \sup_{t \ge 0} K(t,x,0{\scriptstyle -}) &\le \inf_{t \ge 0} K(t,x,c(x)) = 1,
\end{align*}
for all $x \in S$.
Furthermore, with 
\begin{align*}
    \gamma_1(t,x) &\triangleq \sum_{(x,y) \in \mathcal B} Q(t,x,y) \int_0^\infty z G(t,x,y,dz),  \\
    \gamma_2(t,x) &\triangleq \beta(t,x) \int_0^\infty z K(t,x,dz),  \\
    \gamma(t,x) &\triangleq \gamma_1(t,x) + \gamma_2(t,x),
\end{align*}
and 
\begin{equation}
    \ubar \gamma(x) \triangleq \inf_{t \ge 0} \gamma(t,x),
\end{equation}
it holds that 
\begin{equation}
    \label{eqn:a4_positive_reward_accrual}
    \ubar r(x) + \ubar \gamma(x) > 0
\end{equation}
for all $x \in S$.

\end{assumption}

\begin{assumption}
\label{a5}
The sequence $(t_i : i \geq 0)$ is such that
\begin{equation*}
    \ubar t \triangleq \inf \{ t_i - t_{i-1} : i \geq 1 \} > 0.
\end{equation*}
\end{assumption}

The key assumptions on the dynamics of $X$ are \ref{a2} and \ref{a3}. As in the case of stationary transition probabilities, reducible systems can typically be reduced to the analysis of their irreducible sub-systems. Assumption \ref{a3} rules out settings in which some states $x$ become close to absorbing (because the outgoing rates $Q(t,x,\cdot)$ are approaching zero, or the incoming rates $Q(t, \cdot, x)$ are becoming large), and others are becoming close to transient (because their incoming rates are approaching zero). Regarding \ref{a4}, note that \eqref{eqn:a4_positive_reward_accrual} can always be achieved by simply adding a nonzero deterministic multiple of $t$ to $R(t)$. The boundedness hypotheses on the rewards are generally reasonable from a modeling perspective, and rule out pathologies (for example, in which $r(t, \cdot)$ grows quickly enough that $R(t)$ is dominated by reward from the last few time periods, precluding a normal approximation). 

Under \ref{a1}-\ref{a3}, it is known that there exists a unique family of stochastic matrices $(P(s,t): 0 \leq s \leq t < \infty)$ for which
\begin{equation}
\label{eqn:kbe_matrix}
    P(t-s, t) = I + \int_0^s Q(t-u) P(t-u, t) du
\end{equation}
for $0 \leq s \leq t$, and
\begin{equation}
\label{eqn:kfe_matrix}
    P(t, t+s) = I + \int_0^s P(t,t+u) Q(t+u) du
\end{equation}
for $s \geq 0$; see the early work of \cite{feller1940integro} for the case of continuous rates and the extension to the case of measurable rates in \cite{ye2008existence,feinberg2014solutions}. The integral equations \eqref{eqn:kbe_matrix} and \eqref{eqn:kfe_matrix} are known respectively as the \textit{Kolmogorov backward} and \textit{forward equations} associated with $X$. Furthermore, for each $x,y \in S$, $P(s,t, x, y)$ is absolutely continuous in $s$ and $t$ for $0 \leq s \leq t$, so that there exist measurable functions $\pp s P(s,t,x,y)$ and $\pp t P(s,t,x,y)$ such that 
\begin{equation*}
    P(t-r, t) - I = \int_0^r \pp s P(t-u,t) du
\end{equation*}
for $0 \leq r \leq t$, and
\begin{equation*}
    P(s, s+r) - I = \int_0^r \pp t P(s,s+u) du
\end{equation*}
for $r \geq 0$.
In addition, if there exist times $0 = v_0 < v_1 < v_2 < \dots$ with $v_i \to \infty$ such that $Q(\cdot)$ is continuous on $(v_i,v_{i+1})$ with left limits at $v_i$ and right limits at $v_{i+1}$ for $i \geq 0$, then $\pp s P(s,t)$ exists as a partial derivative at all $s \notin \mathscr{D}[0,t] \triangleq \{v_i : i \geq 0, v_i \leq t\}$, and $\pp t P(s,t)$ exists as a partial derivative at all $t \notin \mathscr{D}[t,\infty) \triangleq \{v_i : i \geq 0, v_i \geq t\}$. It follows that 
\begin{equation}
\label{eqn:kbe_diff}
    \frac{\partial}{\partial s} P(s,t) = -Q(s) P(s,t)
\end{equation}
for $s \notin \mathscr{D}[0,t]$, and
\begin{equation}
\label{eqn:kfe_diff}
    \pp t P(s,t) = P(s,t) Q(t)
\end{equation}
for $s \notin \mathscr{D}[t,\infty)$.

Our first result describes the mixing behavior of $X$ that plays a fundamental role in establishing our LLN and CLT. 
\begin{proposition}
\label{prop:mixing}
Assume \ref{a1}-\ref{a3}. For any $u_0 > 0$, there exists $\delta > 0$, positive stochastic matrices $(\rankonestoch_k(s):s \ge 0)$ with identical rows, and stochastic matrices $(\mathscr{A}_k(s): s \ge 0)$ such that
\begin{equation}
\label{eqn:mixing_equation}
    P(s, s+k u_0) = (1-(1-\delta)^k)\,\rankonestoch_k(s) + (1-\delta)^k \mathscr{A}_k(s)
\end{equation}
for $s \ge 0$ and $k \in \Z_+$.
\end{proposition}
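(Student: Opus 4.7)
The plan is to establish \eqref{eqn:mixing_equation} as an iterated Doeblin minorization. For $k=1$, I will show there exist $\delta > 0$ (independent of $s$) and a probability row vector $\nu_s$ such that
\begin{equation*}
    P(s, s+u_0) = \delta \, \mathbf{1} \nu_s + (1-\delta) \, \mathscr{A}_1(s),
\end{equation*}
where $\mathbf{1}$ denotes the column vector of ones and $\mathscr{A}_1(s)$ is stochastic; then $\Psi_1(s) := \mathbf{1}\nu_s$ has identical rows. The general case follows by induction on $k$ applied to the factorization $P(s, s+k u_0) = P(s, s+u_0) \, P(s+u_0, s+k u_0)$.

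For the base case, the core technical step is to show that there exists $\eta = \eta(u_0) > 0$ such that $P(s, s+u_0)(x,y) \ge \eta$ for all $s \ge 0$ and all $x, y \in S$. Using Assumption \ref{assumption:irreducible_rates}, for each pair $(x,y)$ pick a path $x = z_0 \to z_1 \to \cdots \to z_m = y$ in $\mathcal{B}$ with $m \le d - 1$. Partition $[s, s+u_0]$ into $m+1$ equal subintervals and lower bound the probability of the event that, on the $i$-th subinterval, $X$ makes exactly one jump, namely from $z_{i-1}$ to $z_i$, and stays at $y$ on the final subinterval. Assumption \ref{assumption:bounded_rates} supplies both the uniform upper bound $\bar q(z) = \sum_{y \ne z} \bar q(z, y) < \infty$ that controls the holding-time density from above and the uniform lower bound $\ubar q(z_{i-1}, z_i) > 0$ that controls the jump intensity from below, producing an explicit positive lower bound depending only on $u_0$, $\ubar q$, $\bar q$, and $m$. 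Minimizing over the finite collection of pairs yields $\eta > 0$. Setting $\delta := d\eta \le 1$ and $\nu_s$ to be the uniform distribution, the matrix $P(s, s+u_0) - \delta \mathbf{1} \nu_s$ is entrywise nonnegative (every entry is at least $\eta - \delta/d = 0$) with row sums $1 - \delta$, hence equal to $(1-\delta)$ times a stochastic matrix $\mathscr{A}_1(s)$.

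For the induction step, substituting the $k=1$ decomposition into the first factor and the $(k-1)$-decomposition into the second expands the product into four pieces. Three of these preserve the identical-row structure because $(\mathbf{1}\mu) M = \mathbf{1}(\mu M)$ for any row vector $\mu$, and $M'(\mathbf{1}\mu) = \mathbf{1}\mu$ whenever $M'$ is stochastic (since $M'\mathbf{1} = \mathbf{1}$). Collecting coefficients gives identical-row mass
\begin{equation*}
    \delta\bigl(1 - (1-\delta)^{k-1}\bigr) + \delta(1-\delta)^{k-1} + (1-\delta)\bigl(1 - (1-\delta)^{k-1}\bigr) = 1 - (1-\delta)^k,
\end{equation*}
while the remaining term $(1-\delta)^k \mathscr{A}_1(s) \mathscr{A}_{k-1}(s+u_0)$ provides $(1-\delta)^k \mathscr{A}_k(s)$, with $\mathscr{A}_k(s)$ stochastic as a product of stochastic matrices. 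Strict positivity of $\Psi_k(s)$ follows from the strict positivity of $\nu_s$ (taken uniform) and the fact that $\Psi_k(s)$ is built as a convex combination with the $\mathbf{1}\nu_{s+u_0}$ summand carrying positive weight. The main obstacle is securing the uniform lower bound in the base case; after that, the splitting and inductive algebra are routine. The delicate part of that step is ensuring that the path-and-subinterval estimates are uniform in $s$, which is precisely why Assumption \ref{assumption:bounded_rates} requires \emph{both} upper and lower bounds on the rates rather than only one side.
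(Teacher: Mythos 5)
Your proposal is correct, and it reaches the key uniform minorization by a genuinely different route than the paper. The induction step is essentially the paper's: the paper likewise defines $A(s,s+u_0)$ via $P(s,s+u_0)=\delta\Psi+(1-\delta)A(s,s+u_0)$ and multiplies these splits through the Chapman--Kolmogorov factorization, collecting the identical-row mass into $1-(1-\delta)^k$. Where you diverge is the base case $P(s,s+u_0,x,y)\ge\eta>0$ uniformly in $s$. The paper obtains this analytically: it writes $Q(t)=-\tilde\Lambda(I-B(t))$ with $B(t)\ge G$ for a fixed irreducible nonnegative matrix $G$, iterates the Kolmogorov forward integral equation to get $P(s,s+w)\ge e^{-\tilde\Lambda w}\sum_{n}(\tilde\Lambda w)^nG^n/n!$, and then uses Perron--Frobenius to compare with the transition matrix $P_*(u)$ of an auxiliary time-homogeneous irreducible chain, whose positivity for $u\ge u_0$ supplies $\eta$ and the minorizing row $\pi_*$. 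You instead argue probabilistically via a $\mathcal B$-path of length at most $d-1$ and a subinterval-by-subinterval jump construction, with $\bar q$ controlling holding times from above and $\ubar q$ controlling jump intensities from below. Both routes give a $\delta$ explicit in $\ubar q$, $\bar q$, $d$, $u_0$ and uniform in $s$; yours is more elementary and avoids Perron--Frobenius, while the paper's buys the explicit comparison with a stationary chain $P_*$ and its stationary distribution as the minorizing measure (your uniform $\nu_s$ works just as well). Three small points to tighten: handle $x=y$ separately (empty path; the no-jump probability is at least $e^{-\tilde\Lambda u_0}$); note that the jump-and-hold computation of the path probability is legitimate for merely measurable rates by the existence results the paper cites; and, in the positivity of $\Psi_k(s)$ for $k\ge 2$, the summand built from $\nu_s\mathscr{A}_{k-1}(s+u_0)$ need not be strictly positive, but the convex combination still is because the summands involving $\Psi_{k-1}(s+u_0)$ are strictly positive and carry positive weight.
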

\begin{remark} We observe that Proposition \ref{prop:mixing} implies that $X$ exhibits weak ergodicity, in the sense that for each $x,y \in S, \|P(s,s+ku_0, x, \cdot) - P(s,s+ku_0, y, \cdot)\|_{\mathrm{tv}} \to 0$ as $k \to \infty$, where $\|v_1 - v_2\|_{\mathrm{tv}} \triangleq \sum_x |v_1(x)-v_2(x)|$ is the total variation distance between two probability mass functions $v_1, v_2$ on $S$. This asymptotic loss of memory will yield the LLN and CLT for $R(t)$.
\end{remark}

\begin{proof}[Proof of Proposition \ref{prop:mixing}]
Put $\exitrate(t,x) \triangleq -Q(t,x,x)$ and let $\maxexitrate \triangleq \sup\{\exitrate(t,x) : x \in S, t \geq 0\}$. Assumption \ref{a3} ensures that $\maxexitrate < \infty$. Hence, for $t \geq 0$, we may write $Q(t) = -\maxexitrate(I - B(t))$ where $B(t) = I + \maxexitrate^{-1} Q(t)$ is stochastic. We have by \ref{a2} that $B(t) \geq G$ for $G$ an irreducible non-negative matrix.

Equations \eqref{eqn:kbe_matrix} and \eqref{eqn:kfe_matrix} imply that for a.e.~$w \ge 0$,
\begin{equation*}
    \pp t P(s,s+w) = P(s,s+w) Q(s+w),
\end{equation*}
so that
\begin{equation*}
    e^{\maxexitrate r} \pp t P(s,s+w) + \maxexitrate e^{\maxexitrate w} P(s,s+w) = \maxexitrate e^{\maxexitrate w} P(s,s+w) R(s+w)
\end{equation*}
for a.e.~$w \ge 0$. Consequently,
\begin{equation}
\label{eqn:mixing_exp_derivative}
    \dd w (e^{\maxexitrate w} P(s,s+w)) = \maxexitrate e^{\maxexitrate w} P(s,s+w)R(s+w)
\end{equation}
for a.e.~$w$. Since the absolute continuity of $P(s, s+\cdot)$ implies that $e^{\maxexitrate w} P(s,s+w)$ is absolutely continuous in $w$ (see, for example, \cite[p.~111]{royden1988}), \eqref{eqn:mixing_exp_derivative} yields
\begin{equation*}
    e^{\maxexitrate w} P(s,s+w) - I = \int_0^w \maxexitrate e^{\maxexitrate u} P(s,s+u)B(s+u)du
\end{equation*}
for $w \geq 0$. Consequently,
\begin{equation}
\label{eqn:mixing_iterative_base}
    P(s,s+w) = e^{-\maxexitrate w} I+ \int_0^w \maxexitrate e^{\maxexitrate(u-w)} P(s,s+u)B(s+u) du
\end{equation}

If we now iterate \eqref{eqn:mixing_iterative_base} (i.e.~use \eqref{eqn:mixing_iterative_base} to substitute for $P(s,s+u)$ on the right-hand side of \eqref{eqn:mixing_iterative_base}), we obtain
\begin{align*}
    P(s,s+w) &= e^{-\maxexitrate w} I + \int_s^{s+w} \left( e^{-\maxexitrate (u-s)} + \int_s^u P(s,v) e^{\maxexitrate (v-u)} \maxexitrate B(v) dv \right) e^{-\maxexitrate(s+w-u)}\maxexitrate B(u) du \\
    &=e^{-\maxexitrate w} I + e^{-\maxexitrate w} \int_s^{s+w} \maxexitrate B(u) du + \int_s^{s+w} \int_s^u P(s,v) e^{-\maxexitrate(s+w-v)} \maxexitrate^2 B(v) B(u) dv du.
\end{align*}
 Iterating $n$ times and sending $n \to \infty$,
 \begin{align}
     P(s,s+w) &\ge \sum_{n=0}^\infty e^{-\maxexitrate w} \maxexitrate^n \int_s^{s+w} \int_s^{u_n} \cdots \int_s^{u_2} B(u_1) B(u_2) \cdots B(u_n) du_1 \cdots du_n \nonumber \\
     &\geq \sum_{n=0}^\infty e^{-\maxexitrate w} \maxexitrate^n \int_s^{s+w} \int_s^{u_n} \cdots \int_s^{u_2} G^n du_1 \cdots du_n \nonumber \\
     &= \sum_{n=0}^\infty e^{-\maxexitrate w} \frac{(\maxexitrate w)^n}{n!} G^n. \label{eqn:mixing_sum_lower_bound}
 \end{align}
 By the Perron-Frobenius theorem (see, for example, \cite[Theorem 1.5, p.~22]{seneta2006non}), there exists $\gamma > 0$ and a strictly positive column vector $v = (v(x) : x \in S)$ such that $Gv = \gamma v$. Thus,
 \begin{equation*}
     B_*(x,y) := \frac{G(x,y) v(y)}{v(x)}
 \end{equation*}
 is stochastic and irreducible. Also,
 \begin{equation*}
     G(x,y) = \gamma \frac{ v(x) B_*(x,y)}{v(y)}
 \end{equation*}
 and
 \begin{equation*}
     G^n(x,y) = \gamma^n  \frac{v(x) (B_*)^n(x,y)}{v(y)}.
 \end{equation*}
 It follows from \eqref{eqn:mixing_sum_lower_bound} that
 \begin{align*}
     P(s,s+u,x,y) &\geq \frac{v(x)}{v(y)} e^{-\maxexitrate u} \sum_{n=0}^\infty \frac{(\maxexitrate \gamma u)^n}{n!} (B_*)^n (x,y)  \\
     &\geq \frac{v(x)}{v(y)} e^{-\maxexitrate (1-\gamma) u} \sum_{n=0}^\infty e^{-\maxexitrate \gamma u}  \frac{(\maxexitrate \gamma u)^n}{n!} (B_*)^n(x,y) \\
    &\geq \frac{v(x)}{v(y)} e^{-\maxexitrate (1-\gamma) u} P_*(u)(x,y),
 \end{align*}
 where $P_*(u) = \exp(Q_* u)$ is stochastic and corresponds to the time $u$ transition matrix for a time-homogeneous irreducible Markov jump process with rate matrix $Q_* = \maxexitrate \gamma (B_* - I)$. Note that $P_*(u) \to \rankonestoch$ as $u \to \infty$ where $\rankonestoch \triangleq e \pi_*$ is a strictly positive stochastic matrix with identical rows, with $e$ denoting the column vector of all ones and $\pi_* \triangleq (\pi_*(x) : x \in S)$ denoting the unique row vector for which $\pi_* Q_* = 0$. Since $B_*$ is irreducible, there exists $k < d$ such that $\sum_{n=1}^k B_*^n$ is strictly positive; hence also $\sum_{n=0}^k (\maxexitrate \gamma u B_*)^n/n!$ is strictly positive. Thus there exists $\eta > 0$ such that
 \begin{equation*}
     P_*(u_0) = e^{-\maxexitrate \gamma u_0} \sum_{n \ge 0} \frac{(\maxexitrate \gamma u_0)^n}{n!} B_*^n \ge \eta \rankonestoch.
 \end{equation*}
 Then
 \begin{equation*}
     P_*(u_0 + t) = P_*(u_0)P_*(t) \ge \eta \rankonestoch P_*(t) = \eta \rankonestoch,
 \end{equation*}
 since $\pi_*P_*(t) = \pi_*$ for $t \ge 0$. Consequently, $P_*(u) \geq \eta \rankonestoch$ for $u \ge u_0$. Hence,
 \begin{equation*}
     P(s,s+u_0,x,y) \geq \eta \min_{y,z \in S} \left\{\frac{v(z)}{v(y)}\right\} e^{-\maxexitrate (1-\gamma) u_0} \pi_*(y)
 \end{equation*}
 uniformly in $x \in S$ and $s \geq 0$. That is, with 
 \begin{equation*}
     \delta \triangleq \eta \min_{y,z \in S} \left\{\frac {v(z)} {v(y)} \right\} e^{-\maxexitrate(1-\gamma)u_0} > 0,
 \end{equation*}
  we define the stochastic matrix $A(s, s+u_0)$ via the relationship
\begin{equation*}
    P(s,s+u_0) = \delta \rankonestoch + (1-\delta) A(s, s+u_0).
\end{equation*}
Analogously, we define $A(s+l u_0, s + (l+1) u_0)$ for all $l \in \Z_+$. Then, 
\begin{align*}
    P(s, s+2u_0) &= P(s,s+u_0)P(s+u_0,s+2u_0) \\
    &= \delta \rankonestoch + \delta (1-\delta) \rankonestoch A(s+u_0, s+2u_0) + (1-\delta)^2 A(s,s+u_0) A(s+u_0, s+2u_0).
\end{align*}
A simple induction then establishes that
\begin{equation*}
    P(s,s+ku_0) = (1-(1-\delta)^k)\rankonestoch_k(s) + (1-\delta)^k \prod_{l=0}^{k-1} A(s+lu_0, s+(l+1)u_0)
\end{equation*}
 where $\rankonestoch_k(s)$ is a stochastic matrix with identical rows.
\end{proof}
\section[The Martingale Representation for R(t)]{The Martingale Representation for $R(t)$}
\label{sec:mtg}
We now use the mixing structure identified in Section \ref{sec:mixing} to derive a martingale representation for $R(t)$. Specifically, we establish the existence of a function $\nu : \R_+ \times S \to \R$ such that
\begin{equation}
\label{eqn:mtg_rep}
    M(t) \triangleq R(t) - \E R(t) + \nu(t, X(t))
\end{equation}
is a martingale adapted to the filtration $(\mathcal{F}_t : t \geq 0)$, where $\mathcal{F}_t = \sigma(X(s) : 0 \le s \le t)$.

Before stating and proving our result concerning $M(t)$, we introduce the relevant notation. Let $\mu \triangleq (\mu(x): x \in S)$ be a row vector corresponding to the initial distribution of $X$, and let $\p_{t,x}(\cdot)$ (and $\E_{t,x}(\cdot)$) be the probability (and expectation) on the path-space of $X$, conditional on $X(t)=x$. Then, $\p(\cdot)$ and $\E(\cdot)$ are given by
\begin{equation} \label{eqn:P}
    \p(\cdot) \triangleq \sum_{x \in S} \mu(x) \p_{0,x}(\cdot)
\end{equation}
and
\begin{equation} \label{eqn:E}
    \E(\cdot) \triangleq \sum_{x \in S} \mu(x) \E_{0,x}(\cdot).
\end{equation}
With this notation in hand, recall the definition of $\gamma(t,x)$ in \ref{a4} and put
\begin{align*}
    \tilde r(t,x) &\triangleq r(t,x) + \gamma(t,x),  \\
    r_c(x) &\triangleq r(t,x) - \E r(t, X(t)), \\
    \tilde r_c(t,x) &\triangleq \tilde r(t,x) - \E \tilde r (t, X(t)), \\
    h(t_i, x) &\triangleq \int_0^\infty z H(t_i, x, dz),\\
    \HH_c(t_i,x) &\triangleq \HH(t_i,x) - \E h(t_i, X(t_i)),
\end{align*}
and define $\nu_1(t,x)$ and $\nu_2(t,x)$ via
\begin{align*}
    \nu_1(t,x) &\triangleq \int_0^\infty  \E_{t,x} \tilde r_c(t+u, X(t+u)) du, \\
     \nu_2(t,x) &\triangleq \sum_{i : t_i > t} \E_{t,x} \HH_c(t_i, X(t_i)).
\end{align*}
We set $\nu(t,x) \triangleq \nu_1(t,x) + \nu_2(t,x)$ for $t \ge 0$ and $x \in S$. 
\begin{remark} 
When $Q(t) \equiv Q$, $\GG \equiv \HH \equiv \KK \equiv 0$ a.s. for all $k \ge 1$, $\tilde r(t) \equiv \tilde r$, and $X(0)$ has the stationary distribution of $Q$, then $\nu_1(t,x)$ does not depend on $t$, and $\nu_1(t,x) \equiv \nu_1(x)$ is given by
\begin{equation*}
\nu_1(x) = \int_0^\infty \E[\tilde r_c (X(u)) \mid X(0) = x] du.
\end{equation*}
It is well known that $\nu_1 = (\nu_1(x) : x \in S)$ then satisfies \textit{Poisson's equation} given by
\begin{equation*}
    Q \nu_1 = -\tilde r_c,
\end{equation*}
where $\tilde r_c$ is the column vector with entries $(\tilde r_c (x) : x \in S)$; see \cite{glynn2024solution}. Hence, we can view $(\nu_1(t,x) : t \ge 0, x \in S)$ as a non-stationary analog to the solution of Poisson's equation that arises in the stationary setting.
\end{remark}

We now establish the martingale representation \eqref{eqn:mtg_rep} and basic properties of $\nu(t,x)$.
\begin{theorem}
\label{thm:martingale}
    Assume \ref{a1}-\ref{a5}. Then $\sup \{ |\nu(t,x)| : t \ge 0, x \in S\} < \infty$, and $(M(t) : t \ge 0)$ is a martingale adapted to $(\mathcal{F}_t : t \ge 0)$. 
\end{theorem}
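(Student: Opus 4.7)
The plan is to handle the two assertions — boundedness of $\nu$ and the martingale identity — separately, with the mixing Proposition \ref{prop:mixing} doing the heavy lifting for the first and the Markov property doing the work for the second.

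First, for the bound $\sup_{t,x}|\nu(t,x)|<\infty$: I would use Proposition \ref{prop:mixing} to produce, for any $u\ge 0$, the uniform total-variation bound $\|P(t,t+u,x,\cdot)-P(t,t+u,x',\cdot)\|_{\mathrm{tv}}\le 2(1-\delta)^{\lfloor u/u_0\rfloor}$. This is obtained by writing $u=ku_0+r$, decomposing $P(t,t+u)=P(t,t+ku_0)P(t+ku_0,t+u)$, and noting that the rows of $\Psi_k(s)$ in \eqref{eqn:mixing_equation} are identical (so the $\Psi$-part cancels in the difference), while multiplication on the right by the stochastic matrix $P(t+ku_0,t+u)$ preserves the resulting TV bound. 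Writing
\begin{equation*}
\E_{t,x}\tilde r_c(t+u,X(t+u))=\sum_{y\in S}\tilde r(t+u,y)\sum_{x'\in S}\p(X(t)=x')\bigl[P(t,t+u,x,y)-P(t,t+u,x',y)\bigr]
\end{equation*}
and combining with the uniform boundedness of $\tilde r$ (Assumptions \ref{assumption:bounded_rates}, \ref{assumption:bounded_rewards}), one obtains $|\E_{t,x}\tilde r_c(t+u,X(t+u))|=O((1-\delta)^{\lfloor u/u_0\rfloor})$ uniformly in $(t,x)$, which integrates to a finite quantity — giving $\sup_{t,x}|\nu_1(t,x)|<\infty$. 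An identical argument applied to $\E_{t,x}\HH_c(t_i,X(t_i))$, combined with Assumption \ref{assumption:a5} (which forces $t_i-t\ge(i-i_0)\ubar t$ beyond the first index past $t$, making the defining sum a convergent geometric series), yields $\sup_{t,x}|\nu_2(t,x)|<\infty$.

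Second, for the martingale identity $\E[M(t+s)\mid\mathcal{F}_t]=M(t)$, I would compute the reward increment and the $\nu$ increment and show they cancel. Using the Markov property of $X$, the conditional independence of the lump-sums given $(X,N)$, the fact that given $X$ the point process underlying the jump-time rewards has compensator $\sum_{(x,y)\in\mathcal{B}}Q(\cdot,X(\cdot),y)\,dt$, and Campbell's formula applied to $N$ conditional on $X$, the reward increment reduces to
\begin{equation*}
\E[R(t+s)-R(t)\mid\mathcal{F}_t]=\int_t^{t+s}\E_{t,X(t)}\tilde r(u,X(u))\,du+\sum_{k:\,t<t_k\le t+s}\E_{t,X(t)}h(t_k,X(t_k)),
\end{equation*}
so that after subtracting $\E R(t+s)-\E R(t)$ it becomes $\int_t^{t+s}\E_{t,X(t)}\tilde r_c(u,X(u))\,du+\sum_{k:\,t<t_k\le t+s}\E_{t,X(t)}\HH_c(t_k,X(t_k))$. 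On the other hand, iterated conditioning and the Markov property give $\E_{t,X(t)}\nu_1(t+s,X(t+s))=\int_s^\infty\E_{t,X(t)}\tilde r_c(t+v,X(t+v))\,dv$, so $\E_{t,X(t)}\nu_1(t+s,X(t+s))-\nu_1(t,X(t))=-\int_0^s\E_{t,X(t)}\tilde r_c(t+v,X(t+v))\,dv$, and the analogous computation gives $\E_{t,X(t)}\nu_2(t+s,X(t+s))-\nu_2(t,X(t))=-\sum_{k:\,t<t_k\le t+s}\E_{t,X(t)}\HH_c(t_k,X(t_k))$. The two increments cancel, delivering the identity.

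The main obstacle I anticipate is the bookkeeping required to reduce the three structurally distinct lump-sum streams — jump-time rewards $\GG$, scheduled rewards $\HH$, and Poisson-modulated rewards $\KK$ — to the two centered quantities $\tilde r_c$ and $\HH_c$ that pair cleanly with $\nu_1$ and $\nu_2$. The $\GG$-contribution requires integrating against the compensator of the jump process of $X$ to produce the factor $Q(t,x,y)$ in $\gamma_1$, while the $\KK$-contribution requires Campbell's formula conditional on $X$ to produce the factor $\beta(t,x)$ in $\gamma_2$; only after these reductions does the telescoping between the reward and $\nu$ increments become transparent. I would also note in passing that $R(t)$ is not literally $\sigma(X(s):s\le t)$-measurable, so the intended filtration must be the natural enlargement including the realized lump-sums and $N(\cdot)$ up to time $t$; the Markov-property computations above are unaffected because, conditional on $X$, those additional variables are independent of the future of $X$.
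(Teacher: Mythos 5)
Your proposal is correct and follows essentially the same route as the paper: the mixing decomposition of Proposition \ref{prop:mixing} yields geometric decay of $\E_{t,x}\tilde r_c(t+u,X(t+u))$ uniformly in $(t,x)$ (hence boundedness of $\nu_1$ and $\nu_2$, with \ref{assumption:a5} controlling the number of $t_i$'s per unit interval exactly as you describe), and the martingale identity follows from the compensator/Campbell reductions of the $\GG$- and $\KK$-streams together with the telescoping relation $\nu_1(t,x)=\int_0^s\E_{t,x}\tilde r_c(t+u,X(t+u))\,du+\E_{t,x}\nu_1(t+s,X(t+s))$, which is precisely the paper's \eqref{eqn:nu_1_increment}; the paper merely packages the same ingredients as four separate martingales $M_1,\dots,M_4$. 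Your closing observation that $\mathcal{F}_t=\sigma(X(s):0\le s\le t)$ must be enlarged to carry the realized lump-sums and $N$ is a point the paper elides, and is worth retaining.
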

\begin{proof}
We start by proving that the integral and sum defining $\nu_1$ and $\nu_2$ respectively are finite. We set $u_0 = 1$ in Proposition \ref{prop:mixing}, so that
\begin{equation}
\label{eqn:wlln_proof_mixing}
    P(s, s+k) = (1-(1-\delta)^k) \rankonestoch_k(s) + (1-\delta)^k \mathscr{A}_k(s)
\end{equation}
where $\delta > 0$, $\rankonestoch_k(s)$ is a positive stochastic matrix with identical rows and $\mathscr{A}_k(s)$ is a stochastic matrix. For an arbitrary function $f: \R_+ \times S \to \R^d$ such that $\|f\|_\infty \triangleq \sup \{ |f(t,x)| : t \geq 0, x \in S\} < \infty$,  we may use \eqref{eqn:wlln_proof_mixing} to bound
\begin{align}
    |(&P(s,s+k) f(s+k))(x) - (P(s,s+k)f(s+k))(y)| \nonumber \\&= (1-\delta)^k |(\mathscr{A}_k(s) f(s+k))(x) - (\mathscr{A}_k(s)f(s+k))(y)| \nonumber \\
    &\leq (1-\delta)^k \|f\|_\infty. \label{eqn:mixing_proof_bound}
\end{align}
Since $\| P(s+k, s+k+w) f(s+k+w)\|_\infty \le \|f\|_\infty$, it is evident that \eqref{eqn:mixing_proof_bound} implies that
\begin{equation*}
    |(P(s,s+k+w) f(s+k+w))(x) - (P(s,s+k+w)f(s+k+w))(y)| \leq (1-\delta)^k \|f\|_\infty,
\end{equation*}
and hence
\begin{align}
    &|P(t,t+w) f(t+w) (x) - \mu P(0,t+w) f(t+w)| \nonumber \\
    &=|(P(t,t+w)f(t+w))(x) - \sum_{z \in S} \mu(z) (P(0,t)P(t,t+w)f(t+w))(z)| \nonumber \\
    &=  \Big| \sum_{z \in S} \mu(z) \sum_{y \in S} P(0,t,z,y) \big((P(t,t+w)f(t+w))(x) - (P(t,t+w)f(t+w))(y)\big) \Big| \nonumber \\
    &\le \sum_{z \in S} \mu(z) \sum_{y \in S} P(0,t,z,y) (1-\delta)^{\lfloor w \rfloor} \|f\|_\infty = (1-\delta)^{\lfloor w \rfloor} \|f\|_\infty, \label{eqn:mixing_bound_proof}
\end{align}
where $\lfloor w \rfloor$ is the greatest integer less than or equal to $w$.
By \ref{a3} and \ref{a4}, $\tilde r_c$ satisfies $\|\tilde r_c\|_\infty < \infty$. Then, by the bound \eqref{eqn:mixing_bound_proof},
\begin{align*}
    |\nu_1(t,x)| &\leq \int_0^\infty \big|\E_{t,x} \tilde r_c(t+u, X(t+u))\big|du \\
     & \leq \|\tilde r_c\|_\infty \int_0^\infty (1-\delta)^{\lfloor w \rfloor}  dw = \| \tilde r_c \|_\infty / \delta < \infty,
\end{align*}
so $\nu_1(\cdot) = (\nu_1(\cdot, x): x \in S)$ is a well-defined $\R^d$-valued bounded function.
Similarly, $\|\nu_2\|_\infty \leq \|c\|_\infty / \ubar t \delta$, as at most $1/\ubar t$ points $t_i$ fall within the interval $[n,n+1)$ for any $n \ge 0$.

We now prove that the process $M = (M(t) : t \geq 0)$ defined by \eqref{eqn:mtg_rep} is a martingale. Set
\begin{align*}
    M_1(t) &= \int_0^t \tilde r_c (s,X(s)) ds + \nu_1(t,X(t)), \\
    M_2(t) &= \sum_{i=1}^{n(t)} \HH(t_i, X(t_i)) + \nu_2(t,X(t)), \\
    M_3(t) &= \sum_{j=1}^{J(t)} \GG(T_j,Y_{j-1},Y_j) - \int_0^t \gamma_1(s,X(s)) ds, \\
    M_4(t) &=\sum_{k=1}^{N(t)} \KK(\Lambda_k, X(\Lambda_k)) - \int_0^t \gamma_2(s,X(s))ds,
\end{align*}
so that $M(t) = \sum_{\ell=1}^4 M_\ell(t)$. We will prove that each $M_\ell$ is a martingale adapted to $(\mathcal{F}_t : t \ge 0)$. To start with, $M_1(\cdot)$ is clearly integrable and adapted. Also, observe that for $s, t \ge 0$,
\begin{equation*}
\begin{split}
\E[M_1(t+s) \mid \mathcal{F}_t] = M_1(t) + \E \Big[ &\int_0^s \tilde r_c (t+u, X(t+u)) du  \\
 &+ \nu_1(t+s, X(t+s)) - \nu_1(t,X(t)) \mid \mathcal{F}_t \Big].
\end{split}
\end{equation*}
But it is clear that
\begin{equation}
\label{eqn:nu_1_increment}
    \nu_1(t,x) = \int_0^s \E_{t,x} \tilde r_c(t+u, X(t+u)) du + \E_{t,x} \nu_1(t+s,X(t+s)),
\end{equation}
and hence $\E[M_1(t+s) \mid \mathcal{F}_t] = M_1(t)$, verifying the martingale property for $M_1(\cdot)$. A similar argument validates that $M_2(\cdot)$ is a martingale. That $M_3(\cdot)$ and $M_4(\cdot)$ are martingales is standard; see, for example, \cite{jacod1974multivariate}. This completes the proof.
\end{proof}

\section[The Law of Large Numbers for R(t)]{The Law of Large Numbers for $R(t)$}
\label{sec:lln}

Because $\nu(t,X(t))$ is uniformly bounded in $t$ (see Proposition \ref{prop:mixing}), proving the LLN for $R(t)$ is equivalent to establishing that $M(t) / \E R(t)$ converges to 0 as $t \to \infty$. Our first step in the analysis of $M(t)$ is computing the \textit{quadratic variation} $[M](t)$ of this martingale, where
\begin{equation}
\label{eqn:qv_def}
    [M](t) \triangleq \lim_{\|\mathcal P\| \to 0} \sum_i \big(M(\mathcal P_{i+1}) - M(\mathcal P_{i})\big)^2
\end{equation}
where the limit is taken over finite partitions $\mathcal P$ of $[0,t]$ such that $0 = \mathcal P_0 < \mathcal P_1 < \cdots < \mathcal P_n = t$, for some $n$, and $\|\mathcal P\| \triangleq \max\{\mathcal P_{i+1} - \mathcal P_{i} : 0 \leq i < n\}$.

For $t \ge 0$, let $\tilde r_c(t) \triangleq (\tilde r_c (t,x) : x \in S)$, $\gamma(t) \triangleq (\gamma(t,x): x \in S)$, $h_c(t) \triangleq (h_c(t,x) : x \in S)$, $\nu_1(t) \triangleq (\nu_1(t,x) : x \in S)$, and $\nu_2(t) \triangleq (\nu_2(t,x):  x \in S)$. Furthermore, for $i \in \{1,2\}$, let $\Delta \nu_i(T_j) \triangleq \nu_i(T_j, Y_{j}) - \nu_i(T_j,Y_{j-1})$ and $\Delta \nu(T_j) \triangleq \Delta \nu_1(T_j) + \Delta \nu_2(T_j)$. Finally, let  $h_c(t_i) \triangleq (h_c(t_i,x) : x \in S)$ where $h_c(t_i,x) \triangleq h(t_i,x) - \E h(t_i, X(t_i))$. In the following, we write $b(t) = \bigO(a(t))$ when there exist $c, t_0 < \infty$ such that $|b(t)| \le c a(t)$ for $t \ge t_0$.

\begin{proposition}
\label{prop:qv}
    Under \ref{a1}-\ref{a5},
    \begin{equation}
    \label{eqn:m_qv}
        [M](t) = \sum_{j=1}^{J(t)} \big(\GG(T_j, Y_{j-1},Y_j) + \Delta \nu(T_j)\big)^2 + \sum_{k=1}^{N(t)} \KK(\Lambda_k, X(\Lambda_k))^2.
    \end{equation}
\end{proposition}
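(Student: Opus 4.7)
The plan is to compute $[M](t)$ via the sum-of-squared-jumps representation of quadratic variation for pure-jump semimartingales, after isolating the absolutely continuous component of $M$ and showing it contributes nothing to the partition limit in \eqref{eqn:qv_def}.

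Starting from the decomposition $M=M_1+M_2+M_3+M_4$ established in the proof of Theorem~\ref{thm:martingale}, I would separate each $M_\ell$ into a continuous component and a jump component. The Lebesgue integrals $\int_0^t \tilde r_c(s,X(s))\,ds$, $\int_0^t\gamma_1(s,X(s))\,ds$, and $\int_0^t\gamma_2(s,X(s))\,ds$ are absolutely continuous in $t$ with uniformly bounded integrands under \ref{assumption:bounded_rates} and \ref{assumption:bounded_rewards}, so each contributes zero to the quadratic variation. The piecewise-deterministic processes $t\mapsto\nu_i(t,X(t))$ vary smoothly in $t$ between events along sample paths; smoothness of $t\mapsto\nu_i(t,x)$ for fixed $x$ follows from dominated convergence in the defining integral/sum combined with the uniform geometric bound supplied by Proposition~\ref{prop:mixing}, while \ref{assumption:a5} controls the density of the scheduled times relevant to $\nu_2$.

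The next step is to enumerate the jumps of $M$. At each jump time $T_j$ of $X$, the $\GG$-term in $R(t)$ contributes $\GG(T_j,Y_{j-1},Y_j)$, and $\nu(\cdot,X(\cdot))$ jumps by $\nu(T_j,Y_j)-\nu(T_j,Y_{j-1})=\Delta\nu(T_j)$; since $t\mapsto \E R(t)$ is continuous at the random time $T_j$, the total jump of $M$ is $\Delta M(T_j)=\GG(T_j,Y_{j-1},Y_j)+\Delta\nu(T_j)$. At each Poisson event time $\Lambda_j$, only the $\KK$-term contributes (since $X$ does not jump at $\Lambda_j$ almost surely, and the deterministic functions $\E R$ and $\nu$ are continuous in $t$ at such times), giving $\Delta M(\Lambda_j)=\KK(\Lambda_j,X(\Lambda_j))$. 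Squaring and summing over each family, and using that $\{T_j\}$ and $\{\Lambda_j\}$ are almost surely disjoint so there are no cross-contributions, yields the two sums in \eqref{eqn:m_qv}.

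The step I expect to demand the most care is the rigorous reduction from the mesh-zero partition limit \eqref{eqn:qv_def} to the sum-of-squared-jumps formula in this mixed setting. Concretely, one must control $\sum_i\bigl(M(\mathcal{P}_{i+1})-M(\mathcal{P}_i)\bigr)^2$ over those partition cells containing no jump, on which $M$ varies only through absolutely continuous components of uniformly bounded density; by \ref{assumption:bounded_rates} and \ref{assumption:bounded_rewards} together with the boundedness of $\nu$ established in Theorem~\ref{thm:martingale}, this sum is bounded by a constant times $\|\mathcal{P}\|\cdot t$, which vanishes as $\|\mathcal{P}\|\to 0$. On a partition cell containing a single jump, the increment is the jump plus an $\bigO(\|\mathcal{P}\|)$ absolutely continuous correction, so a standard expansion of the square closes the identification with $\sum_{0<s\le t}(\Delta M(s))^2$.
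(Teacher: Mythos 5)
Your outline follows essentially the same route as the paper: isolate the jumps of $M$ at the $T_j$'s and $\Lambda_j$'s, show the remaining variation is Lipschitz on jump-free intervals, refine the partition by the jump times, and pass to the limit. The paper's proof is exactly this, carried out through the partition definition \eqref{eqn:qv_def} with the Lipschitz bound \eqref{eqn:qv_a} on jump-free cells.

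The one step where your sketch is genuinely thinner than what is needed is the claim that $t\mapsto\nu_i(t,X(t))$ ``varies smoothly between events by dominated convergence.'' Dominated convergence gives you continuity of $t\mapsto\nu_1(t,x)$, but continuity alone does not kill a quadratic variation contribution; you need the increment over a jump-free cell to be $\bigO(\|\mathcal P\|)$, i.e.\ a Lipschitz (or bounded-density absolutely continuous) estimate. That estimate does not come from dominated convergence: it comes from the Kolmogorov backward equation, which lets you write $P(\tau,\cdot)-P(s,\cdot)=\int_s^\tau Q(u)P(u,\cdot)\,du$ and hence bound $\nu_1(\tau,X(s))-\nu_1(s,X(s))$ by $(\tau-s)\,\tilde\Lambda\,\|\nu_1\|_\infty$ plus integral terms with uniformly bounded integrands; Proposition~\ref{prop:mixing} is what makes the tail of the defining integral summable so that this bound is finite. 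This is precisely the computation occupying most of the paper's proof, and your argument is incomplete without it (you do state the needed conclusion in your final paragraph, but you have not derived it).

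A second point, which you share with the paper's own write-up but should not pass over silently: $M$ also jumps at the scheduled times $t_k$, with jump $\HH(t_k,X(t_k))-h(t_k,X(t_k))$, because the sum $\sum_{k\le n(t)}\HH(t_k,X(t_k))$ increases by the realized reward while $\nu_2(\cdot,X(\cdot))$ and $\E R(\cdot)$ together only remove its conditional mean. When the cdf's $H(t_k,x,\cdot)$ are nondegenerate this jump is nonzero with positive probability, and a sum-of-squared-jumps argument must either include the corresponding term $\sum_{k\le n(t)}\big(\HH(t_k,X(t_k))-h(t_k,X(t_k))\big)^2$ in \eqref{eqn:m_qv} or record the assumption under which it vanishes. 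Your phrase ``varies smoothly between events'' quietly absorbs these jumps into the continuous part, which is not valid as stated.
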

\begin{proof}
    For every interval $(s,\tau]$ in which $X$ does not jump and $N$ does not experience an arrival, we have $X(u) = X(s{\scriptstyle+}) = X(s)$ and $N(u) = N(s{\scriptstyle+}) = N(s)$ for $s < u \le \tau$. Thus
    \begin{align}
    \begin{split}
        M(\tau)&-M(s) \\ &= \int_s^\tau (\tilde r_c(u) - \gamma(u))(X(s)) du + \nu_1(\tau, X(s)) - \nu_1(s, X(s)) \\ 
        & \quad+ \sum_{i=n(s)+1}^{n(\tau)} \big(\HH(t_i) -(P(s,t_i) h(t_i)) (X(s))\big) + \sum_{i=n(\tau)+1}^\infty ((P(\tau,t_i)-P(s,t_i))h_c(t_i))(X(s)) 
    \end{split} \displaybreak[1] \nonumber \\
    \begin{split}
        &= \int_s^\tau (\tilde r_c(u) - \gamma(u)) (X(s)) du + \int_\tau^\infty (P(\tau,u)\tilde r_c(u))(X(s)) du \\
        &\quad- \int_s^\infty (P(s,u) \tilde r_c(u))(X(s)) du + \sum_{i=n(s)+1}^{n(\tau)} \Big(\HH(t_i) -\int_s^{t_i} (Q(u)P(u,t_i)h(t_i))(X(s))du\Big) \\
        &\quad+ \big((P(\tau,\tau)-P(s,\tau)\big) \sum_{i=n(\tau)+1}^\infty \big(P(\tau,t_i) h_c(t_i))\big)(X(s)) 
    \end{split} \displaybreak[1] \nonumber \\
    \begin{split}
        &= \int_s^\tau (\tilde r_c(u) - \gamma (u)) (X(s)) du + (P(\tau,\tau) - P(s,\tau)) \int_0^\infty \big(P(\tau,\tau+u) \tilde r_c(\tau+u)\big)(X(s)) du \\
        &\quad- \int_s^\tau (P(s,u) \tilde r_c(u)) (X(s)) du + \sum_{i=n(s)+1}^{n(\tau)} \Big(\HH(t_i) -\int_s^{t_i} (Q(u)P(u,t_i)h(t_i))(X(s))du\Big) \\
        &\quad+ \big((P(\tau,\tau) - P(s,\tau)) \nu_2(\tau)\big)(X(s)) 
    \end{split} \displaybreak[1] \nonumber\\
    \begin{split}
        &= \int_s^\tau (\tilde r_c(u) - \gamma(u)) (X(s)) du + \big((P(\tau,\tau)-P(s,\tau))(\nu_1(\tau)+\nu_2(\tau))\big)(X(s)) \\
        &\quad- \int_s^\tau (P(s,u)\tilde r_c(u))(X(s)) du +  \sum_{i=n(s)+1}^{n(\tau)} \Big(\HH(t_i) -\int_s^{t_i} (Q(u)P(u,t_i)h(t_i))(X(s))du\Big)
    \end{split} \displaybreak[1]\\
    \begin{split}
        &= \int_s^\tau (\tilde r_c(u) - \gamma(u)) (X(s)) du + \int_s^\tau (Q(u)P(u,\tau)(\nu_1(\tau)+\nu_2(\tau))(X(s)) du \\
        &\quad- \int_s^\tau(P(s,u)\tilde r_c(u))(X(s)) du +  \sum_{i=n(s)+1}^{n(\tau)} \Big(\HH(t_i) -\int_s^{t_i} (Q(u)P(u,t_i)h(t_i))(X(s))du\Big).
    \end{split} \displaybreak[1] \label{eqn:integral_expression_for_M_increment}
    \end{align}
It follows by taking absolute values in \eqref{eqn:integral_expression_for_M_increment} and applying the conservative upper bounds on rewards imposed by \ref{a4} and \ref{a5} that for any $0 \le s \le \tau$,
\begin{equation}
\label{eqn:qv_a}
    |M(\tau)-M(s)| \le (\tau - s) \big(2 \|\tilde r_c\|_\infty + \|\gamma\|_\infty + \maxexitrate ( \|\nu_1\|_\infty + \|\nu_2\|_\infty) + (1/\ubar t) (1+\maxexitrate)\|c\|_\infty \big).
\end{equation}

For any partition $\mathcal P$, we may construct a refinement $\mathcal P^*$ by adding the jump points $T_1, T_2, \dots, T_{J(t)}$ and $\Lambda_1, \Lambda_2, \dots, \Lambda_{N(t)}$. Note that in any subinterval of $\mathcal P^*$ at which a $T_j$ (respectively, $\Lambda_j$) appears as a right endpoint, the variation over that interval can be bounded by $| \GG(T_j,Y_{j-1},Y_j) + \Delta \nu(T_j)|$ (respectively, $|\KK(\Lambda_j, X(\Lambda_j))|)$, plus a contribution bounded by \eqref{eqn:qv_a} with $(\tau - s) \le \|\mathcal P^*\| \le \| \mathcal P\|$. Given the bound from \eqref{eqn:qv_a} over the subintervals of the partition without jumps, we obtain the upper bound
\begin{equation*}
    \sum_{j=1}^{J(t)} (\GG(T_j, Y_{j-1},Y_j) + \Delta \nu(T_j) + \bigO(\|\mathcal P\|))^2 + \sum_{k=1}^{N(t)} (\KK(\Lambda_k, X(\Lambda_k)) + \bigO(\|\mathcal P\|))^2 + \bigO\Big(\sum_i (\mathcal P_{i+1}-\mathcal P_i)^2\Big)
\end{equation*}
on \eqref{eqn:qv_def}, and an obvious lower bound of
\begin{equation*}
    \sum_{j=1}^{J(t)} (\GG(T_j, Y_{j-1},Y_j) + \Delta \nu(T_j))^2 + \sum_{k=1}^{N(t)} \KK(\Lambda_k, X(\Lambda_k))^2.
\end{equation*}
Sending $\|\mathcal P\| \to 0$, we obtain the desired result.
\end{proof}

Under \ref{a1}-\ref{a5}, $\E R(t) = \Theta(t)$ as $t \to \infty$. Also, Proposition \ref{prop:qv} establishes that 
\begin{equation*}
    \E[M](t) = \bigO(\E J(t) + \E N(t)).
\end{equation*}
Since
\begin{align*}
    \E J(t) &= \E \int_0^t \exitrate(s,X(s)) ds, \\
    \E N(t) &= \E \int_0^t \beta(s,X(s)) ds,
\end{align*}
(see, e.g., \cite{jacod1974multivariate}), we have that $\E J(t) = \bigO(t)$ and $\E N(t) = \bigO(t)$ due to \ref{a3} and \ref{a4}. Because $\E (M(t)-M(0))^2 = \E[M](t)$ (see \cite[p.~73]{protter2005stochastic}), we have
\begin{equation*}
    \p\left( \Big| \frac{M(t)}{\E R(t)} \Big| > \epsilon \right) \le \frac{\E [M](t)}{\epsilon^2 (\E R(t) )^2} \to 0,
\end{equation*}
proving that $M(t) / \E R(t) \convp 0$ as $t \to \infty$. Hence,
\begin{equation*}
    \frac{R(t)}{\E R(t)} \convp 1
\end{equation*}
as $t \to \infty$. We now strengthen this result to a.s.~convergence.
\begin{theorem}
\label{thm:slln}
    Under \ref{a1}-\ref{a5},
    \begin{equation*}
        \frac{R(t)}{\E R(t)} \to 1 \quad \text{a.s.}
    \end{equation*}
    as $t \to \infty$.
\end{theorem}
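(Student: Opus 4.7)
The plan is to bootstrap the convergence in probability $R(t)/\E R(t) \convp 1$ established just above to almost-sure convergence by the standard subsequence-plus-interpolation route. Since $\|\nu\|_\infty < \infty$ (Theorem \ref{thm:martingale}) and $\E R(t) = \Theta(t)$, it is enough to show that $M(n)/n \to 0$ almost surely along the integers and then to control the oscillation of $R$ on each unit sub-interval.

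For the subsequence step, I would upgrade the second-moment bound used in the in-probability argument to a fourth-moment bound via the Burkholder--Davis--Gundy inequality, which gives $\E M(n)^4 \le c_4 \E [M](n)^2$ for the square-integrable martingale $M$. The explicit representation of $[M](n)$ in Proposition \ref{prop:qv}, together with the uniform bounds $\GG(T_j,Y_{j-1},Y_j) \le \|c\|_\infty$ and $\KK(\Lambda_j,X(\Lambda_j)) \le \|c\|_\infty$ from \ref{assumption:bounded_rewards} and the bound $\|\nu\|_\infty < \infty$, produces a deterministic constant $C$ with $[M](n) \le C (J(n) + N(n))$. Since \ref{assumption:bounded_rates} and \ref{assumption:bounded_rewards} stochastically dominate $J(n)$ and $N(n)$ by homogeneous Poisson processes with rates $\tilde\Lambda$ and $\|\bar\beta\|_\infty$ respectively, the second moment of $J(n)+N(n)$ is $O(n^2)$, whence $\E M(n)^4 = O(n^2)$. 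Markov's inequality then gives $\p(|M(n)|/n > \epsilon) = O(1/n^2)$, which is summable, so Borel--Cantelli yields $M(n)/n \to 0$ a.s., and consequently $R(n)/\E R(n) \to 1$ a.s.

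The second step interpolates between integers via the monotonicity of $R$, a direct consequence of the positivity of $r$ and the non-negativity of $\GG, \HH, \KK$ built into \ref{assumption:bounded_rewards}. For $n \le t \le n+1$, monotonicity gives
\begin{equation*}
\frac{R(n)}{\E R(n+1)} \le \frac{R(t)}{\E R(t)} \le \frac{R(n+1)}{\E R(n)}.
\end{equation*}
The increment $\E R(n+1) - \E R(n)$ is uniformly bounded in $n$ (using \ref{assumption:bounded_rates}, \ref{assumption:bounded_rewards}, and the fact that by \ref{assumption:a5} at most $1/\ubar t$ of the $t_i$ fall in any unit interval), while $\E R(n) = \Theta(n)$, so both $\E R(n+1)/\E R(n)$ and $\E R(n)/\E R(n+1)$ tend to $1$. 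Combined with the subsequence result, the squeeze delivers $R(t)/\E R(t) \to 1$ a.s.

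The main obstacle is the fourth-moment estimate on $M(n)$: while Proposition \ref{prop:qv} makes $[M](n)$ a transparent finite sum, the estimate $\E M(n)^4 = O(n^2)$ requires both the uniform almost-sure boundedness of each summand (which needs the support bound $c(\cdot)$ from \ref{assumption:bounded_rewards} and the boundedness of $\nu$) and the quadratic growth of $\E(J(n)+N(n))^2$ (which needs domination of the non-stationary counting processes by homogeneous Poisson processes). Once these two inputs are in place, BDG provides the clean passage from $[M]$ to $M$ in $L^4$, and the remainder of the argument is routine.
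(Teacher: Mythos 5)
Your argument is correct, and while it shares the paper's overall architecture (reduce to the martingale $M$, prove convergence along integer times, then interpolate), both key steps are executed with different tools. For the integer subsequence, the paper stays at the level of second moments: since $\E(M(n)-M(n-1))^2 = \E[M](n)-\E[M](n-1) = \bigO(1)$, the series $\sum_i \E(M(i)-M(i-1))^2/i^2$ converges, the martingale convergence theorem gives a.s.\ convergence of $\sum_i (M(i)-M(i-1))/i$, and Kronecker's lemma delivers $M(n)/n \to 0$ a.s. Your route instead upgrades to fourth moments via BDG together with $[M](n) \le C(J(n)+N(n))$ and $\E(J(n)+N(n))^2 = \bigO(n^2)$, then applies Markov and Borel--Cantelli; this works but requires the extra (routine) moment computation on the counting processes, whereas the paper's argument needs only what Proposition \ref{prop:qv} already provides. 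For the interpolation, the paper controls $\sup_{0\le s\le 1}|M(n+s)-M(n)|$ via BDG and Borel--Cantelli, while you bypass the martingale entirely by observing that $R$ is nondecreasing (which \ref{assumption:bounded_rewards} does guarantee, since $\ubar r > 0$ and the lump-sum rewards are a.s.\ nonnegative) and squeezing $R(t)/\E R(t)$ between $R(n)/\E R(n+1)$ and $R(n+1)/\E R(n)$, using that the increments of $\E R$ are uniformly bounded while $\E R(n)=\Theta(n)$. Your monotonicity squeeze is more elementary, but note it exploits the sign conventions on the rewards, whereas the paper's oscillation bound would survive if one relaxed the nonnegativity assumptions; one minor point to make explicit is that BDG bounds $M(n)-M(0)$ rather than $M(n)$, which is harmless here since $M(0)=\nu(0,X(0))$ is bounded.
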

\begin{proof}
In view of \eqref{eqn:mtg_rep} and the boundedness of $\nu(t,X(t))$, it is sufficient to prove that $M(t) / \E R(t) \to 0$ a.s.~as $t \to \infty$. Since $\E (M(n) - M(n-1))^2 = \E [M](n) - \E [M](n-1) = \bigO(1)$ by Proposition \ref{prop:qv}, we have that
\begin{equation*}
    \sum_{i=1}^\infty \frac{\E(M(i) - M(i-1))^2}{i^2} = \bigO(1) \sum_{i=1}^\infty \frac{1}{i^2} < \infty.
\end{equation*}
Hence, the Martingale Convergence Theorem \cite[p.~17]{hall1980martingale} implies that
\begin{equation*}
    \sum_{i=1}^n \frac{M(i) - M(i-1)}{i}
\end{equation*}
converges a.s.~as $n \to \infty$. Kronecker's lemma \cite[p.~31]{hall1980martingale} then implies that
\begin{equation}
\label{eqn:qv_b}
    \frac{1}{n} M(n) \to 0 \quad \text{a.s.}
\end{equation}
as $n \to \infty$. Also, the Burkholder-Davis-Gundy (BDG) inequality \cite[p.~266]{protter2005stochastic} and Proposition \ref{prop:qv} yield the inequality
\begin{align*}
    P\big( \sup_{0 \le t \le 1} \big|M(n+t)-M(n)\big| > n\big) &\le \E \Big(\sup_{0 \le t \le 1} \big|M(n+t)-M(n)\big|^2 \Big) / n^2 \\
    &= \bigO( \E [M](n+1) - \E[M](n) ) / n^2 \\
    &= \bigO(1/n^2),
\end{align*}
so the Borel-Cantelli lemma shows that
\begin{equation}
\label{eqn:qv_c}
\frac{1}{n} \sup_{0 \le t \le 1} \big| M(n+t) - M(n)\big| \to 0 \quad \text{a.s.}
\end{equation}
as $n \to \infty$. Of course, \eqref{eqn:qv_b} and \eqref{eqn:qv_c} prove that $M(t) / t \to 0$ a.s.~as $t \to \infty$, so that $M(t) / \E R(t) \to 0$ a.s.~as $t\to \infty$, proving our strong law.
\end{proof}

We will discuss the numerical computation of $\E R(t)$ in Section \ref{sec:computation_mean_var}, enabling the approximation $R(t) \overset{D}{\approx} \E R(t)$ for $t$ large.

\section[The Central Limit Theorem for R(t)]{The Central Limit Theorem for $R(t)$}
\label{sec:clt}

The natural CLT-based normal approximation for $R(t)$ is
\begin{equation}
\label{eqn:clt_51}
R(t) \overset{D}{\approx} \E R(t) + \sqrt{\Var R(t)}\,\mathcal{N}(0,1),
\end{equation}
when $t$ is large. The rigorous justification for \eqref{eqn:clt_51} draws upon a limit theorem of the form
\begin{equation}
\label{eqn:clt_52}
    \frac{R(t) - \E R(t)}{\sqrt{\Var R(t)}} \Rightarrow \mathcal{N}(0,1)
\end{equation}
as $t \to \infty$. A starting point for \eqref{eqn:clt_52} is to establish that $\Var R(t)$ grows at least at linear rate as $t \to \infty$, that is,
\begin{equation}
\label{eqn:qv_53}
    \liminf_{t \to \infty} \frac{\Var R(t)}{t} > 0.
\end{equation}
To prove \eqref{eqn:qv_53}, we assume:
\begin{assumption}
\label{a6}
    At least one of the following conditions holds:
    \begin{enumerate}[label=\roman*)]
        \item \label{cond:G_not_pathological}
        \begin{equation*}
            \ubar \eta \triangleq \inf_{t \ge 0} \sum_{(x,y) \in \mathcal{B}}  \int_0^\infty \big(z + \nu(t,y) - \nu(t,x)\big)^2 G(t,x,y,dz) > 0;
        \end{equation*}
        \item \label{cond:K_nonzero} 
        for each $x \in S$,
        \begin{equation*}
            \ubar \kappa(x) \triangleq \inf_{t \ge 0} \int_0^\infty z K(t,x,dz) > 0,
        \end{equation*} 
        and $\ubar \beta(x) > 0$.
    \end{enumerate}
\end{assumption}
\begin{remark}
    Observe that if $\inf_{t \ge 0} \sum_{(x,y) \in \mathcal B}  \Var \GG(t,x,y) > 0$, then condition \ref{cond:G_not_pathological} of \ref{a6} is valid.
\end{remark}
\begin{remark}
    Assumption \ref{a6} is an assertion that $R(t)$ has positive asymptotic variance, and is expected to be satisfied in all but pathological situations.
\end{remark}
\begin{proposition}
\label{prop:asymptotic_variance}
    Under \ref{a1}-\ref{a5},
    \begin{equation}
    \label{eqn:qv_54}
        \Var R(t) \sim \E [M](t)
    \end{equation}
    as $t \to \infty$. Furthermore, if \ref{a6} also holds,
    \begin{equation}
    \label{eqn:qv_55}
        \liminf_{t \to \infty} \frac{\E [M](t)}{t} > 0,
    \end{equation}
\end{proposition}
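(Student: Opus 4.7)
My plan for \eqref{eqn:qv_54}: The martingale identity \eqref{eqn:mtg_rep} gives $R(t) - \E R(t) = M(t) - \nu(t, X(t))$, and since the left side has mean zero,
\[
\Var R(t) = \E(M(t) - \nu(t, X(t)))^2.
\]
Expanding the square, the term $\E \nu(t, X(t))^2$ is $O(1)$ by the boundedness $\sup_{t,x} |\nu(t,x)| < \infty$ from Theorem \ref{thm:martingale}, and Cauchy--Schwarz gives $|\E[M(t) \nu(t, X(t))]| \le \|\nu\|_\infty \sqrt{\E M(t)^2}$. For the main term, I would decompose $M(t) = (M(t) - M(0)) + M(0)$; the martingale property kills the cross term $\E[M(0)(M(t) - M(0))]$ via the tower property, and $\E(M(t) - M(0))^2 = \E[M](t)$ is the standard quadratic-variation identity quoted earlier in the paper. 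Hence $\E M(t)^2 = \E[M](t) + O(1)$, and collecting the pieces yields $\Var R(t) = \E[M](t) + O(\sqrt{\E[M](t)}) + O(1)$, which gives $\Var R(t) \sim \E[M](t)$ once $\E[M](t) \to \infty$, a property supplied by \eqref{eqn:qv_55}.

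For \eqref{eqn:qv_55}, I would take expectation in the quadratic-variation formula from Proposition \ref{prop:qv}. Conditioning on $\sigma(X, N)$ to handle the random lump rewards and using standard compensator identities for the jumps of $X$ (rates $Q(s, X(s), \cdot)$) and of $N$ (intensity $\beta(s, X(s))$) yields
\begin{multline*}
\E[M](t) = \E \int_0^t \sum_{y:(X(s),y) \in \mathcal B} Q(s, X(s), y) \int_0^\infty (z + \nu(s, y) - \nu(s, X(s)))^2 G(s, X(s), y, dz)\, ds \\
+ \E \int_0^t \beta(s, X(s)) \int_0^\infty z^2 K(s, X(s), dz)\, ds.
\end{multline*}
Rewriting each expectation as $\sum_x \p(X(s) = x) [\cdots]$, I would apply Proposition \ref{prop:mixing} with $k=1$ to the decomposition $P(0,s) = P(0, s - u_0) P(s-u_0, s)$ to obtain the uniform lower bound $\p(X(s) = x) \ge \delta \pi_\ast(x) > 0$ for every $s \ge u_0$ and every $x \in S$ (using that the row $\pi_\ast$ of the matrix $\Psi$ in Proposition \ref{prop:mixing} is strictly positive by construction). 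Under \ref{cond:G_not_pathological}, combining this with $Q(s, x, y) \ge \min_{(x, y) \in \mathcal B} \ubar q(x, y) > 0$ from \ref{assumption:bounded_rates} converts the first term's integrand, for each $s \ge u_0$, into a positive multiple of $\sum_{(x,y) \in \mathcal B} \int_0^\infty (z + \nu(s,y) - \nu(s,x))^2 G(s,x,y,dz) \ge \ubar \eta$, giving $\E[M](t) \ge c_1 (t - u_0)$. Under \ref{cond:K_nonzero}, Jensen's inequality applied to the probability measure $K(s, x, \cdot)$ gives $\int z^2 K(s, x, dz) \ge (\int z K(s, x, dz))^2 \ge \ubar k(x)^2$, and combined with $\beta(s, x) \ge \ubar \beta(x) > 0$ and the same mixing bound, the second term grows at linear rate.

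The main obstacle is converting the structural bound $\ubar \eta$ (an infimum over $(x, y) \in \mathcal B$, uniform in $t$) into a genuine lower bound on the expected integrand, which weights each pair $(x, y)$ by the time-$s$ marginal probability $\p(X(s) = x)$. In the non-stationary setting these marginals need not converge, so the step cannot invoke any stationary distribution; it requires a uniform-in-$s$ strict positive lower bound on every marginal once $s$ is moderately large. Proposition \ref{prop:mixing} is precisely what supplies this bound, and once it is in hand the remaining work---splitting the time integral into $[0, u_0]$ and $[u_0, t]$ and treating the two clauses of \ref{assumption:a6} separately---is routine.
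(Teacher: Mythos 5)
Your proposal is correct and follows essentially the same route as the paper: the lower bound \eqref{eqn:qv_55} is obtained by taking expectations in Proposition \ref{prop:qv} via the compensator identities and then using the uniform marginal bound $\p(X(s)=x) \ge \delta\pi_*(x)$ supplied by Proposition \ref{prop:mixing}, and \eqref{eqn:qv_54} follows from the boundedness of $\nu$ together with $\E(M(t)-M(0))^2 = \E[M](t)$ and the divergence of $\E[M](t)$. The only differences are cosmetic (you expand the square and apply Cauchy--Schwarz where the paper uses the $L^2$ triangle inequality, and you make explicit the Jensen step $\int z^2 K(s,x,dz) \ge \ubar k(x)^2$ that the paper leaves implicit).
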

\begin{proof}
    We first prove \eqref{eqn:qv_55}. Note that by Proposition \ref{prop:qv},
    \begin{align*}
        \E [M](t)  &= \E \sum_{j=1}^{J(t)} (\GG(T_j, Y_{j-1}, Y_j) + \Delta \nu(T_j))^2 + \E \sum_{k=1}^{N(t)} \KK(\Lambda_k, X(\Lambda_k))^2 \displaybreak[1]\\
        &= \sum_{(x, y) \in \mathcal B}\E \sum_{j=1}^{J(t)} (\GG(T_j, x, y) + \nu(T_j, y) - \nu(T_j, x))^2 I((Y_{j-1}, Y_j) = (x,y)) \displaybreak[1]\\
        &\quad+\sum_{x \in S} E\sum_{k=1}^{N(t)} \KK(\Lambda_k, x)^2 I(X(\Lambda_k)=x) \displaybreak[1]\\
        &\ge \ubar \eta \sum_{(x,y) \in \mathcal B} \E \sum_{j=1}^{J(t)} I((Y_{j-1},Y_j) = (x,y)) + \sum_{x \in S} \ubar \kappa^2(x) \E \sum_{k=1}^{N(t)} I(X(\Lambda_k)=x) \displaybreak[1]\\
        &= \ubar \eta \sum_{(x,y) \in \mathcal B}  \int_0^t P(X(u)=x) Q(u,x,y)du + \sum_{x \in S} \ubar \kappa^2(x) \int_0^t P(X(u)=x) \beta(u,x)du
    \end{align*}
    where the last equality follows as a consequence of standard theory, e.g.~\cite{jacod1974multivariate}. Let $u_0 > 0$. Then by virtue of Proposition \ref{prop:mixing}, there exists $\delta > 0$ such that for $t \ge u_0$,
    \begin{align*}
        \E [M](t) &\ge \ubar \eta \int_{u_0}^t  \sum_{(x,y) \in \mathcal B} \delta \pi_*(x) \inf_{w \ge 0} Q(w, x, y) du +  \sum_{x \in S} \ubar \kappa^2(x) \int_{u_0}^t \delta \pi_*(x) \inf_{u \ge 0} \beta(u,x) dw \\
        &= \delta (t-{u_0}) \Big( \ubar \eta \sum_{(x,y) \in \mathcal B} \pi_*(x) \ubar q(x,y) + \sum_{x \in S} \ubar \kappa^2(x) \ubar \beta(x) \pi_*(x) \Big),
    \end{align*}
    with $\pi_*$ as in the proof of Proposition \ref{prop:mixing}.
    It follows from \ref{a3} and \ref{a6} that \eqref{eqn:qv_55} holds. As for \eqref{eqn:qv_54}, note that \eqref{eqn:mtg_rep} implies 
    \begin{equation}
    \label{eqn:qv_56}
        \E^{1/2} (M(t) - M(0))^2 \le \sqrt{\Var R(t)} + \|\nu\|_\infty
    \end{equation}
    and
    \begin{equation}
        \label{eqn:qv_57}
        \sqrt{\Var R(t)} \le \E^{1/2}(M(t) - M(0))^2 + \| \nu\|_\infty.
    \end{equation}
    Since $\E [M](t) = \E (M(t) - M(0))^2$, \eqref{eqn:qv_55}, \eqref{eqn:qv_56}, and \eqref{eqn:qv_57} yield the conclusion \eqref{eqn:qv_54}.
\end{proof}

It is now convenient to verify the LLN for $([M](t) : t \ge 0)$ that is suggested by Proposition \ref{prop:asymptotic_variance}.
\begin{proposition}
    \label{prop:slln_qv}
    Under \ref{a1}-\ref{a6},
    \begin{equation}
        \label{eqn:clt_58}
        \frac{[M](t)}{\Var R(t)} \to 1 \quad \as
    \end{equation}
    as $t \to \infty$.
\end{proposition}
\begin{proof}
    The key observation is that 
    \begin{equation*}
        [M](t) = \sum_{j=1}^{J(t)} \tilde \GG(T_j, Y_{j-1}, Y_j) + \sum_{k=1}^{N(t)} \tilde\KK(\Lambda_k, X(\Lambda_k)),
    \end{equation*}
    where $\tilde \GG(T_j,Y_{j-1},Y_j) \triangleq (\GG(T_j,Y_{j-1},Y_j) + \Delta \nu(T_j))^2$ and $\tilde \KK (\Lambda_k, X(\Lambda_k)) \triangleq \KK(\Lambda_k, X(\Lambda_k))^2$, so that $[M](t)$ is a special case of \eqref{eqn:reward_functional} with $r \equiv 0$ and $\HH \equiv 0$ (meaning that each $\HH(t_i, X(t_i))$ is deterministically zero). Hence, Theorem \ref{thm:slln} applies, thereby validating \eqref{eqn:clt_58} in view of \eqref{eqn:qv_54}.
\end{proof}

We turn next to the CLT itself.
\begin{theorem}
\label{thm:clt}
    Under \ref{a1}-\ref{a6},
    \begin{equation}
    \label{eqn:clt}
        \frac{R(t) - \E R(t)}{\sqrt{\Var R(t)}} \Rightarrow \mathcal{N}(0,1)
    \end{equation}
    as $t \to \infty$.
\end{theorem}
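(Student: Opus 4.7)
The plan is to establish the CLT for $R(t)$ by transferring it to a CLT for the martingale $M$ from Theorem \ref{thm:martingale} and then applying a standard martingale CLT. Since $\nu$ is uniformly bounded by Theorem \ref{thm:martingale} and $\Var R(t) \sim \E [M](t) = \Theta(t) \to \infty$ by Proposition \ref{prop:asymptotic_variance} together with \ref{assumption:a6}, the decomposition $R(t) - \E R(t) = M(t) - \nu(t, X(t)) + \nu(0, X(0))$ shows that it suffices to prove
\begin{equation*}
    \frac{M(t)}{\sqrt{\E [M](t)}} \Rightarrow \mathcal{N}(0,1)
\end{equation*}
as $t \to \infty$. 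Moreover, $\E [M](t) - \E [M](\lfloor t \rfloor) = O(1)$ and $\E(M(t) - M(\lfloor t \rfloor))^2 = O(1)$ uniformly in $t$, so this reduces to proving the CLT along integer times: $M(n)/s_n \Rightarrow \mathcal{N}(0,1)$, with $s_n^2 \triangleq \E [M](n) = \Theta(n)$.

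I would then apply the discrete-time martingale central limit theorem from \cite{hall1980martingale} to the martingale-difference sequence $X_k \triangleq M(k) - M(k-1)$, $k \ge 1$, with respect to $(\mathcal F_k)_{k \in \Z_+}$. Two conditions need verification:
\begin{enumerate}[label=(\roman*)]
    \item convergence of the conditional quadratic variation, $V_n^2 \triangleq \sum_{k=1}^n \E[X_k^2 \mid \mathcal F_{k-1}]$ satisfies $V_n^2 / s_n^2 \convp 1$;
    \item a conditional Lindeberg condition, $\sum_{k=1}^n \E[X_k^2 I(|X_k| > \epsilon s_n) \mid \mathcal F_{k-1}] / s_n^2 \convp 0$ for each $\epsilon > 0$.
\end{enumerate}

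For (i), the identity $\E[X_k^2 \mid \mathcal F_{k-1}] = \E[\langle M \rangle(k) - \langle M \rangle(k-1) \mid \mathcal F_{k-1}]$ (from $M^2 - \langle M \rangle$ being a martingale) combined with the compensation formula for marked point processes (\cite{jacod1974multivariate}) applied to the jump contributions in Proposition \ref{prop:qv} yields $\langle M \rangle(t) = \int_0^t F(u, X(u))\, du$ for a measurable, uniformly bounded $F : \R_+ \times S \to \R_+$ given by
\begin{equation*}
    F(u, x) = \sum_{y: (x,y) \in \mathcal{B}} Q(u, x, y) \int_0^\infty (z + \nu(u, y) - \nu(u, x))^2\, G(u, x, y, dz) + \beta(u, x) \int_0^\infty z^2\, K(u, x, dz).
\end{equation*}
Hence $\E[X_k^2 \mid \mathcal F_{k-1}] = G_k(X(k-1))$ with $G_k(x) \triangleq \int_{k-1}^k (P(k-1, u) F(u, \cdot))(x)\, du$ bounded uniformly in $(k, x)$. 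The mixing bound \eqref{eqn:mixing_bound_proof} from the proof of Theorem \ref{thm:martingale} yields $|\Cov(G_k(X(k-1)), G_\ell(X(\ell-1)))| = O((1-\delta)^{|k-\ell|})$, so $\Var V_n^2 = O(n) = o(s_n^4)$, and Chebyshev gives (i).

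For (ii), the individual jumps of $M$ are uniformly bounded by $C \triangleq 2\|c\|_\infty + 2\|\nu\|_\infty$, using \ref{assumption:bounded_rewards} and Theorem \ref{thm:martingale}. Within each interval $[k-1, k]$, the counts $J([k-1, k])$ and $N([k-1, k])$ are stochastically dominated by Poisson random variables with parameters depending only on $\tilde\Lambda$ and $\|\bar\beta\|_\infty$ (conditionally on $\mathcal F_{k-1}$), and the number of deterministic times in $[k-1, k]$ is at most $1/\ubar t + 1$ by \ref{assumption:a5}. This gives an almost-sure, uniform-in-$k$ bound on $\E[X_k^4 \mid \mathcal F_{k-1}]$, so $\E[X_k^2 I(|X_k| > \epsilon s_n) \mid \mathcal F_{k-1}] \le \E[X_k^4 \mid \mathcal F_{k-1}]/(\epsilon s_n)^2 = O(1/n)$ uniformly in $k$; summing over $k \le n$ and dividing by $s_n^2 = \Theta(n)$ yields (ii). The main technical obstacle is the careful identification of $\langle M \rangle$ as the compensator of the right-hand side of \eqref{eqn:m_qv}, requiring separate compensation arguments for the jumps driven by $X$ and by $N$; once this is in place, the remainder of the argument is a routine application of the mixing established in Proposition \ref{prop:mixing}.
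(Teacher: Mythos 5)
Your proposal is correct and shares the paper's overall skeleton: reduce to the martingale $M$ via the boundedness of $\nu$ and Proposition \ref{prop:asymptotic_variance}, pass to integer times, and apply a discrete-time martingale CLT from \cite{hall1980martingale} to the unit-time increments $M(k)-M(k-1)$. Where you genuinely diverge is in which pair of sufficient conditions you verify and how. For the norming condition, the paper works with the \emph{raw} quadratic variation, proving $\sum_i D_{ni}^2/\E[M](n)\to 1$ a.s.\ by the neat trick of observing that $[M](t)$ in \eqref{eqn:m_qv} is itself a reward functional of the form \eqref{eqn:reward_functional} (with $r\equiv 0$, $\HH\equiv 0$), so Theorem \ref{thm:slln} applies, followed by a martingale-SLLN correction term $\theta_i$. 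You instead work with the \emph{predictable} quadratic variation: you identify $\langle M\rangle(t)=\int_0^t F(u,X(u))\,du$ explicitly by compensating \eqref{eqn:m_qv}, and then get $V_n^2/s_n^2\convp 1$ from the mixing bound \eqref{eqn:mixing_bound_proof}, an $O((1-\delta)^{|k-\ell|})$ covariance decay, and Chebyshev. For negligibility, the paper verifies $\max_i|D_{ni}|\convp 0$ with $\E\max_i D_{ni}^2$ bounded via BDG fourth-moment estimates on $\Delta J(i)+\Delta N(i)$; you verify the conditional Lindeberg condition via a uniform a.s.\ bound on $\E[X_k^4\mid\mathcal F_{k-1}]$ obtained from Poisson domination of the jump counts (which is valid here since all rates are bounded by $\tilde\Lambda$ and $\max_x\bar\beta(x)$) --- essentially the same moment content obtained by a different tool. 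Both routes are legitimate applications of Chapter 3 of \cite{hall1980martingale}; note only that the conditional-variance/Lindeberg version requires the nesting condition on the $\sigma$-fields, which is trivially satisfied since your $\mathcal F_{n,i}=\mathcal F_i$ does not depend on $n$. Your approach buys a more self-contained and explicit treatment of the variance (no recursive appeal to the SLLN), at the cost of having to justify the compensator identification carefully; the paper's buys economy by recycling Theorem \ref{thm:slln}. One small slip: by \eqref{eqn:mtg_rep} the identity is $R(t)-\E R(t)=M(t)-\nu(t,X(t))$ (with $M(0)=\nu(0,X(0))$), not $M(t)-\nu(t,X(t))+\nu(0,X(0))$; this is immaterial since the discrepancy is uniformly bounded.
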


In order to prove Theorem \ref{thm:clt}, will use the following martingale CLT from \cite{hall1980martingale}.
\begin{proposition}[Theorem 3.4 of \cite{hall1980martingale}]
    \label{prop:hh}
    Let $(D_{ni} : n \ge 1,1 \le j \le n)$ be a family of square-integrable random variables, and suppose that $(\mathcal F_i : i \ge 1)$ is a filtration such that each $D_{ni}$ is measurable with respect to $\mathcal F_i$, and $\E[D_{ni} \mid \mathcal F_{i-1}] = 0$ a.s. Suppose that 
    \begin{enumerate}[label=\roman*)]
        \item $\max_{1 \le i \le n} |D_{ni}| \convp 0$ as $n \to \infty$, and $\E \max_{1 \le i \le n} D_{ni}^2$ is bounded in $n$; \label{cond:i}
        \item $\sum_{i=1}^n D_{ni}^2 \Rightarrow 1$ as $n \to \infty$. \label{cond:v}
    \end{enumerate}
    Then, it holds that
    $\sum_{i=1}^n D_{ni} \Rightarrow \mathcal N(0,1)$
    as $n \to \infty$.
\end{proposition}
\begin{remark}
    Theorem 3.4 of \cite{hall1980martingale} is stated in slightly greater generality than our Proposition 5 in order to account for the case when the $D_{ni}$'s are ``not quite'' martingale differences with respect to $(\mathcal F_i : i \ge 1)$. Because in our setting we will apply the theorem to $D_{ni}$'s that are genuine martingale differences, we have simplified the statement of Theorem 3.4 accordingly.
\end{remark}

\begin{proof}[Proof of Theorem \ref{thm:clt}]
    To prove the CLT, we first apply Proposition \ref{prop:hh} to the discrete time martingale $(M(n) : n \ge 1)$ at integer times. In particular, we write
    \begin{equation*}
        \frac{M(n)}{\sqrt{\E[M](n)}} =  \sum_{j=1}^n D_{nj},
    \end{equation*}
    where $D_{nj} \triangleq (M(j)-M(j-1))/\sqrt{\E [M](n)}$ for $1 \le j \le n$. In the notation of \cite[Theorem 3.4]{hall1980martingale}, we set $\mathcal{F}_{nj} \triangleq \mathcal{F}_j \triangleq \sigma(X(s): 0 \le s \le j)$ for $1 \le j \le n$, and $\mathcal{G}_n \triangleq \{\emptyset, \Omega\}$ the trivial $\sigma$-algebra, so that $\mathcal{G}_{ni} = \mathcal{F}_i$ for $1 \le i \le n$. 
    We will verify each condition of Proposition \ref{prop:hh} in order. Note that
    \begin{equation}
    \label{eqn:clt_59}
        \max_{1 \le i \le n} |D_{ni}| \le \frac{\|r_c\|_\infty + \|c\|_\infty \big( 1 / \ubar t +  \max_{1\le i\le n} (\Delta J(i) +\Delta N(i) )\big)}{\sqrt{\E [M](n)}},
    \end{equation}
    where $\Delta J(i) \triangleq J(i) - J(i-1)$ and $\Delta N(i) \triangleq N(i) - N(i-1)$ for $i \ge 1$. To bound the $\Delta J(i)$'s, set
    \begin{equation*}
        \widetilde M_i(t) \triangleq (J+N)(i-1+t)-(J+N)(i-1) - \int_{i-1}^{i-1+t} (\exitrate+\beta)(s,X(s)) ds
    \end{equation*}
    for $0 \le t \le 1$ and $i \ge 1$. Observe that $(\widetilde M_i(t) : 0 \le t \le 1)$ is a martingale adapted to $(\mathcal{F}_{i-1+t} : 0 \le t \le 1)$. Then, the quadratic variation $[\widetilde M_i](1)$ satisfies 
    \begin{equation*}
        [\widetilde M_i](1) = \Delta J(i) + \Delta N(i),
    \end{equation*}
    and
    \begin{equation*}
        \E [\widetilde M_i](1) = \int_{i-1}^i \E (\exitrate + \beta)(s, X(s)) ds \le \maxexitrate + \tilde \beta,
    \end{equation*}
    where $\tilde \beta \triangleq \max_{x \in S} \bar \beta(x)$.
    Hence, Minkowski's inequality and the BDG inequality for martingales imply that 
    \begin{align}
        &\E^{1/4} [(\Delta J(i) + \Delta N(i))^4 \mid \mathcal{F}_{i-1}] \nonumber \\
        &\le \E^{1/4} \left[ \Big( \int_{i-1}^i (\exitrate+\beta)(s, X(s)) ds\Big)^4 \mid \mathcal{F}_{i-1}\right] + \E^{1/4} [\widetilde M_i(1)^4 \mid \mathcal{F}_{i-1}] \nonumber \\
        & \le \maxexitrate + \tilde \beta + c_2^{1/4} \E^{1/4}[[\widetilde M_i](1)^2 \mid \mathcal{F}_{i-1}] \nonumber \\
        &= \maxexitrate + \tilde \beta + c_2^{1/4} \E^{1/4}[(\Delta J(i) + \Delta N(i))^2 \mid \mathcal{F}_{i-1}], \label{eqn:clt_113}
    \end{align}
    where $c_2 > 0$ is a constant appearing in the BDG inequality. But along the same lines,
    \begin{align}
        \E^{1/2} [(\Delta J(i) + \Delta N(i))^2 \mid \mathcal{F}_{i-1}] &\le \maxexitrate + \tilde \beta + \E^{1/2} [\widetilde M_i(1)^2 \mid \mathcal{F}_{i-1}] \nonumber \\
        &= \maxexitrate + \tilde \beta + \E^{1/2}[[\widetilde M_i](1) \mid \mathcal{F}_{i-1}] \le \maxexitrate + \tilde \beta + \big(\maxexitrate + \tilde \beta\big)^{1/2}. \label{eqn:clt_115}
    \end{align}
    So, $\sup_{i \ge 1} \E[(\Delta J(i) + \Delta N(i))^4 \mid \mathcal{F}_{i-1}]$ is bounded by a finite deterministic constant. Hence, \eqref{eqn:qv_55} of Proposition \ref{prop:asymptotic_variance} implies that 
    \begin{equation}
    \label{eqn:clt_510}
        \frac{1}{\E [M](n)} \E \max_{1 \le i \le n} (\Delta J(i)+\Delta N(i))^2 = \bigO(1/n) \int_0^\infty P\left(\max_{1 \le i \le n} (\Delta J(i) + \Delta N(i))^2 > x\right) dx.
    \end{equation}
    But Markov's inequality, together with our bound on $\E (\Delta J(i) + \Delta N(i))^4$, yields that
    \begin{align}
        \frac{1}{n} P\left(\max_{1 \le i \le n} (\Delta J(i) + \Delta N(i))^2 > x\right) &\le \frac{1}{n} \sum_{i=1}^n P((\Delta J(i) + \Delta N(i))^2 > x) \nonumber \\
        &\le \frac{1}{n} \sum_{i=1}^n \frac{\E (\Delta J(i) + \Delta N(i))^4}{x^2} \nonumber \\
        &\le \sup_{i \ge 1} \frac{\E (\Delta J(i) + \Delta N(i))^4}{x^2}. \label{eqn:clt_511}
    \end{align}
    With the integrable bound \eqref{eqn:clt_511}, the Dominated Convergence Theorem applied to \eqref{eqn:clt_510} shows that
    \begin{equation*}
        \frac{1}{\E[M](n)} \E \max_{1 \le i \le n} (\Delta J(i) + \Delta N(i))^2 \to 0
    \end{equation*}
    as $n \to \infty$. 
    Consequently, \eqref{eqn:clt_59} proves that $\max_{1 \le i \le n} |D_{ni}| \convp 0$ as $n \to \infty$ and $\E \max_{1 \le i \le n} D_{ni}^2$ is bounded in $n$, verifying condition \ref{cond:i}.

    Next, we verify condition \ref{cond:v}. Observe that 
    \begin{equation*}
        \E\left[D_{ni}^2 - \frac{[\widetilde M_i](1)}{\E[M](n)} \mid \mathcal F_{i-1}\right] = \frac{1}{\E[M](n)} \E\left[(M(i)-M(i-1))^2 - [\widetilde M_i](1) \mid \mathcal{F}_{i-1}\right]= 0.
    \end{equation*}
    Thus, with $\theta_i \triangleq (M(i)-M(i-1))^2 -[\widetilde M_i](1)$, we have that $\E [\theta_i \mid \mathcal{F}_{i-1}] = 0$. So,
    \begin{equation*}
        M^*(n) \triangleq \sum_{i=1}^n \theta_i
    \end{equation*}
    is a martingale adapted to $(\mathcal{F}_n : n \ge 0)$. Also,
    \eqref{eqn:clt_113} and \eqref{eqn:clt_115} imply that 
    \begin{equation*}
        \sup_{i \ge 1} \E \theta_i^2 < \infty,
    \end{equation*}
    from which it follows from Proposition \ref{prop:asymptotic_variance} that 
    \begin{equation*}
        \frac{M^*(n)}{\E[M](n)} \to 0 \quad \text{a.s.};
    \end{equation*}
    see our similar argument leading to \eqref{eqn:qv_b}. Hence, Proposition \ref{prop:slln_qv} implies that 
    \begin{equation*}
        \frac{\sum_{i=1}^n (M(i)-M(i-1))^2}{\E[M](n)} = \sum_{i=1}^n D_{ni}^2 \to 1 \quad \text{a.s.}
    \end{equation*}
    as $n \to \infty$, verifying \ref{cond:v}. We may now apply \cite[Theorem 3.4]{hall1980martingale}, thereby proving that 
    \begin{equation*}
        \frac{M(n)}{\sqrt{\E [M](n)}} \Rightarrow \mathcal{N}(0,1)
    \end{equation*}
    as $n \to \infty$, so that
    \begin{equation*}
        \frac{M(n)}{\sqrt{\Var R(n)}} \Rightarrow \mathcal{N}(0,1)
    \end{equation*}
    as $n \to \infty$ by \eqref{eqn:qv_54} of Proposition \ref{prop:asymptotic_variance}.

    Since $\E \big| M(t)-M(\lfloor t \rfloor) \big| = \bigO\big(\E (J+N)(t) - \E (J+N)(\lfloor t\rfloor)\big) = \bigO(1)$ while $\Var R(t) = \Theta(t)$, it follows that 
    \begin{equation*}
        \frac{M(t)}{\sqrt {\Var R(t)}} \Rightarrow \mathcal{N}(0,1).
    \end{equation*}
    Finally, the boundedness of $\nu(t,X(t))$ yields the CLT \eqref{eqn:clt}. 
\end{proof}
\begin{remark}
    In Section \ref{sec:computation_mean_var}, we discuss the computation of $\Var R(t)$, thereby permitting \eqref{eqn:clt} to be used to obtain distributional approximations for $R(t)$.
\end{remark}

Although the form of $\nu(t,x)$ makes \ref{a6} somewhat difficult to verify directly in the case that $\KK \equiv 0$ and the jump-time lump-sum rewards $\GG$ are deterministic, the following condition suffices to verify \ref{a6}.

\begin{assumption}
\label{a7}
    Assume that $\GG(t,x,y)$ is deterministic for all $t\ge 0$, $x,y \in S$, that is, there exists $g(t,x,y) \in \R$ such that $G(t,x,y,z) = I(g(t,x,y) \le z)$, and $\ubar g \triangleq \inf\{g(t,x,y) : t \ge 0, x,y \in S\} > 0$.
\end{assumption}
\begin{proposition}
\label{prop:nonzero_asymptotic_variance_alt}
Suppose \ref{a1}-\ref{a5}. Then \ref{a7} implies \ref{a6}.
\end{proposition}
\begin{proof}
Let $\Delta \nu(s,x,y) \triangleq \nu(s,y) - \nu(s,x)$. For any time $s \ge 0$, if $\Delta \nu(s,x,y) = 0$ for all $x,y \in S$, then
\begin{equation*}
    \sum_{(x,y) \in \mathcal B}(g(s,x,y) + \Delta \nu(s,x,y))^2 \ge \ubar g^2.
\end{equation*}
Otherwise, there is at least one pair $(x_0,y_0) \in \mathcal B$ for which $\Delta \nu(s,x,y) \ne 0$. Hence, by \ref{a2}, we may find a closed walk $x_0,x_1,x_2,\dots,x_n$ on the graph $\mathcal{B}$; that is, $x_n = x_0$ and $(x_j,x_{j+1}) \in \mathcal{B}$ for $j=0,\dots, n-1$. Necessarily, $\sum_{j=0}^{n-1} \Delta \nu(t,x_j,x_{j+1})=0$ by cancellation. Since $\Delta \nu(t,x_0,y_0) \ne 0$, there must exist some $j'$ for which $\Delta \nu(t,x_{j'}, x_{{j'}+1}) < 0$. Therefore
\begin{equation*}
    \sum_{(x,y) \in \mathcal B} (g(s,x,y) + \Delta \nu(s,x,y))^2 \ge (g(s,x_{j'},x_{j'+1}))^2 \ge \ubar g^2,
\end{equation*}
which verifies condition \ref{cond:G_not_pathological} of \ref{a6} since 
\begin{equation*}
    \inf_{s \ge 0} \int_0^\infty (z + \Delta \nu(s,x,y))^2 G(s,x,y,dz) = \inf_{s \ge 0} (g(s,x,y) + \Delta \nu(s,x,y))^2 \ge \ubar g^2 > 0
\end{equation*} 
under \ref{a7}.
\end{proof}

\begin{remark}
    The technical assumption \ref{a6} ensures that the asymptotic normalized variance is nonzero in \eqref{eqn:clt}. This condition arises because the time-varying variance term must appear in the denominator on the left-hand side of \eqref{eqn:clt} in our non-stationary setting. On the other hand, in the stationary setting, the time-average variance constant can be moved to the right-hand side of \eqref{eqn:clt}, where it appears in the numerator. As a consequence, in such stationary CLT's, one need not assume that the time-average variance constant is necessarily nonzero.
\end{remark}

\section[Computation of E R(t) and Var R(t)]{ Computation of $\E R(t)$ and $\Var R(t)$}
\label{sec:computation_mean_var}
In order to use the LLN and CLT approximations for $R(t)$, we need to compute $\E R(t)$ and $\Var R(t)$. Note that
\begin{align}
\label{eqn:comp_expectation_R}
    \E R(t) &= \int_0^t \E \tilde r(X(u))du + \sum_{i=1}^{n(t)} \E \HH(t_i, X(t_i)) \\
    &= \int_0^t \mu P(0,u) \tilde r(u) du + \sum_{i=1}^{n(t)} \mu P(0,t_i) h(t_i), \nonumber
\end{align}
recalling that $\mu = (\mu(x): x \in S)$ was defined as the distribution of $X(0)$.

We will now show that $\E_{0,x} R(t)$ can be computed as the solution of an integral (or, equivalently, differential) equation. Recall that $\E_{0,x}$ was defined above as the expectation with respect to probability measure on paths of $X$ conditional on $X(0) = x$.
For $0 \le s \le t$, we define the vector $m(s) = (m(s,x): x \in S)$ by
\begin{equation*}
    m(s) \triangleq \int_s^t P(s,u) \tilde r(u) du + \sum_{s \le t_i \le t} P(s,t_i) h(t_i).
\end{equation*}
Note that $\E_{0,x} R(t) = m(0,x)$, so that by \eqref{eqn:E}, we have that $\E R(t) = \mu m(0)$.
Let $n_0 \triangleq n(t) + 1$, and, for the purposes of this section, put $t_0 = 0$, $t_{n_0} = t$, and $h(0) \equiv h(t_n) \equiv 0$. Adding the points $t_0, t_{n_0}$ to the collection of $t_i$'s simplifies our computational discussion.

\begin{proposition}
\label{prop:integral_equations_m}
Assume \ref{a1}-\ref{a5}. For $1 \le i \le n_0$ and $t_{i-1} < s \le t_i$,
\begin{equation*}
    m(s) = \int_s^{t_i} \tilde r(u) du + \int_s^{t_i} Q(u) m(u) du + m(t_i),
\end{equation*}
where $m(t_{n_0}) = 0$.
Furthermore, $m(\cdot)$ is a.e.~differentiable, and for $0 \le s < t_i - t_{i-1}$,
\begin{equation}
\label{eqn:m_ode}
    \dd s m(t_i-s) = \tilde r(t_i-s) + Q(t_i-s)m(t_i-s) \quad \text{a.e.}
\end{equation}
and $m(t_i) = h(t_i)+m(t_i {\scriptstyle+})$.
\end{proposition}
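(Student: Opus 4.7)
The plan is to work one interval $(t_{i-1}, t_i]$ at a time, on which no deterministic time point lies strictly in the interior and the only scheduled reward in the sum defining $m(s)$ that falls at the right endpoint is $h(t_i)$. Splitting the integral and the sum at $t_i$ and invoking the semigroup property $P(s,u) = P(s,t_i) P(t_i, u)$ for $s \le t_i \le u$ to pull $P(s,t_i)$ out of every contribution from $[t_i, t]$ gives
\begin{equation*}
m(s) = \int_s^{t_i} P(s,u) \tilde r(u)\, du + P(s, t_i)\, m(t_i), \qquad s \in (t_{i-1}, t_i].
\end{equation*}
The jump identity $m(t_i) = h(t_i) + m(t_i{\scriptstyle+})$ drops out immediately from the definition by isolating the $j = i$ term $P(t_i, t_i) h(t_i) = h(t_i)$ from the sum (the convention in the definition of $m$ includes the mass at $t_i$ when $s = t_i$ but not when $s > t_i$).

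The heart of the argument is to convert the representation above into the claimed integral equation. Applying the change of variables $v = t_i - u$ in the Kolmogorov backward equation \eqref{eqn:kbe_matrix} yields $P(s, t_i) = I + \int_s^{t_i} Q(v) P(v, t_i)\, dv$ for $0 \le s \le t_i$. Substituting the representation above (taken at each $u \in (t_{i-1}, t_i]$ in place of $s$) into $\int_s^{t_i} Q(u) m(u)\, du$ and applying Fubini---which is legitimate because $Q$, $P$, and $\tilde r$ are uniformly bounded under \ref{assumption:bounded_rates} and \ref{assumption:bounded_rewards}---gives
\begin{align*}
\int_s^{t_i} Q(u) m(u)\, du
&= \int_s^{t_i} \left( \int_s^v Q(u) P(u, v)\, du \right) \tilde r(v)\, dv + \left( \int_s^{t_i} Q(u) P(u, t_i)\, du \right) m(t_i) \\
&= \int_s^{t_i} \bigl(P(s,v) - I\bigr) \tilde r(v)\, dv + \bigl(P(s, t_i) - I\bigr) m(t_i) \\
&= m(s) - \int_s^{t_i} \tilde r(v)\, dv - m(t_i),
\end{align*}
where the second equality invokes the backward equation identity above to collapse each inner integral. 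Rearranging yields the claimed integral equation.

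For the ODE form, the integral equation exhibits $m(\cdot)$ on $(t_{i-1}, t_i)$ as an indefinite integral of the bounded measurable integrand $-\tilde r(\cdot) - Q(\cdot) m(\cdot)$, so $m$ is absolutely continuous and hence a.e.~differentiable with $m'(u) = -\tilde r(u) - Q(u) m(u)$ a.e.; the substitution $u = t_i - s$ then delivers \eqref{eqn:m_ode}.

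The only non-trivial item is the Fubini/Kolmogorov-backward manipulation in the middle step; everything else is bookkeeping at the endpoints $t_i$ and the uniform boundedness already established via \ref{assumption:measurable_rates}--\ref{assumption:bounded_rewards}. I do not anticipate any substantive obstacle beyond writing that computation out carefully.
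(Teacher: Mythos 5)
Your proposal is correct and follows essentially the same route as the paper: both start from the semigroup decomposition $m(s) = \int_s^{t_i} P(s,u)\tilde r(u)\,du + P(s,t_i)m(t_i)$, substitute the Kolmogorov backward equation $P(s,u) = I + \int_s^u Q(v)P(v,u)\,dv$, and interchange the order of integration to recover $\int_s^{t_i} Q(w)m(w)\,dw$; you merely run the algebra in the opposite direction (verifying the identity for $\int Q\,m$ rather than expanding $m(s)$ forward). The endpoint bookkeeping and the absolute-continuity argument for the a.e.\ ODE form match the paper's treatment.
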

\begin{remark}
    Note that \eqref{eqn:m_ode} provides a backward recursion for solving for $m(\cdot)$. In particular, starting from $t_{n_0} = t$, we use the differential equation \eqref{eqn:m_ode} to compute $m$ over $(t_{i-1}, t]$ (with $m(t{\scriptstyle+})=0$). Given $i < n_0$, \eqref{eqn:m_ode} allows us to compute $m$ over $(t_{i-1}, t_i]$ from $m(t_i{\scriptstyle+})$ and $h(t_i)$. 
\end{remark}
\begin{proof}
    Fix $1 \le i \le n_0$. Observe that the backward equations yield, for $s \in (t_{i-1}, t_i]$,
    \begin{align*}
        m(s) &= \int_s^{t_i} P(s,u) \tilde r(u) du  + P(s,t_i) m(t_i) \displaybreak[1]\\
        &= \int_s^{t_i}\Big(P(u,u) + \int_s^u Q(v) P(v,u) dv\Big) \tilde r(u) du + \Big(P(t_i,t_i)+\int_s^{t_i} Q(u)P(u,t_i)du\Big)m(t_i)  \displaybreak[1]\\
        &= \int_s^{t_i} \tilde r(u) du + m(t_i) + \int_s^{t_i} Q(w) \Big(\int_w^{t_i} P(w,u) \tilde r(u) du + P(w,t_i)m(t_i)\Big) dw \displaybreak[1]\\
        &= \int_s^{t_i} \tilde r(u)du + \int_s^{t_i} Q(w) m(w) dw + m(t_i).
    \end{align*}
    It follows from this representation that $m(\cdot)$ is a.e.~differentiable, and satisfies \eqref{eqn:m_ode}.
\end{proof}
Suppose for this paragraph that $Q(s)$ and $\tilde r(s)$ are continuous in $s$ except at points in $\mathcal{D}[0,t]$. Without loss of generality, we may merge the points in $\mathcal{D}[0,t]$ into the $t_i$'s, setting $h(u_i)=0$ for $u_i \in \mathcal{D}[0,t]$. Under these conditions, the fundamental theorem of calculus implies that $m(\cdot)$ is differentiable on $(t_{i-1},t_i]$ with a left derivative at $t_i$, and $m(\cdot)$ satisfies the differential equation
\begin{equation*}
\label{eqn:comp_c}
    \dd s m(t_i-s) = \tilde r(t_i-s) + Q(t_i-s)m(t_i-s)
\end{equation*}
for $0 \le s < t_i - t_{i-1}$, subject to the boundary condition $m(t_i) = h(t_i)+m(t_i{\scriptstyle +})$. 

\begin{remark} \label{remark:prendiville}
    The equation \eqref{eqn:m_ode} is an inhomogeneous, non-autonomous first-order linear ODE. Therefore, an explicit solution of \eqref{eqn:m_ode} is given by the variation of constants formula, 
\begin{equation}
    \label{eqn:variation_of_constants}
    m(t_i - s) = P(t_i - s, t_i) m(t_i) + \int_{t_i-s}^{t_i} P(t_i-s, u) \tilde r(u)\,du \end{equation}
for $s \in (t_{i-1}, t_i]$, (e.g.~ \cite[p.~82]{hale1980ordinary}). 
Here, $P(w,u)$ is the matrix of transition probabilities obtained by solving the Kolmogorov equations (\ref{eqn:kbe_matrix}, \ref{eqn:kfe_matrix}) over the interval $[w,u]$. This formula clarifies that $m(\cdot)$ is available in closed form when the transition probabilities $P(w,u)$ are available in closed-form.
However, closed-form expressions for the transition probabilities $P(w,u)$ are only available in special cases, even for stationary MJPs. One such case is the two-state switching model on $S = \{0, 1\}$. Let $\lambda(\cdot)$ and $\mu(\cdot)$ be non-negative continuous functions on $\R_+$, and let $X_1$ be an MJP on $S$ with transition rates given by
\begin{equation}
    \label{eqn:two_state_Q}
    Q(t) = \begin{pmatrix}
        -\lambda(t) & \lambda(t) \\ 
        \mu(t) & -\mu(t)
    \end{pmatrix}.
\end{equation}
It is then straightforward to solve \eqref{eqn:kfe_diff} to obtain 
\begin{equation}
\label{eqn:two_state_transition_probs}
\begin{split}
    p_{01}(s,t) \triangleq P(s,t,0,0) &= 1-e^{-A(s,t)} \left( 1 + \int_s^t e^{A(s,u)} \lambda(u)\,du \right), \\ 
    p_{11}(s,t) \triangleq P(s,t,1,0) &= e^{-A(s,t)} \left( 1 + \int_s^t e^{A(s,u)} \mu(u)\,du \right),
\end{split}
\end{equation}
for $t \ge 0$, where $A(s,t) \triangleq \int_s^t (\lambda + \mu)(u)\,du$. 
When $2 < d < \infty$, finite state non-stationary MJPs for which transition probabilities are available in closed form are scarce. One nontrivial model with closed form transition probabilities is the \textit{Prendiville process} (first introduced by \cite{feller1939grundlagen}, see also \cite{takashima1956note,zheng1998note}), which may be viewed as an ensemble of independent two-state switching models in which $X(t)$ is the total number of such models that occupy $\{1\}$ at time $t$. The MJP $X_2$ on $S = \{0, \dots, d-1\}$ is therefore a Prendiville process if its transition rates are such that
\begin{equation} \label{eqn:prendiville_Q}
     Q(t, x, y) = \begin{dcases}
        (d-1-x) \lambda(t) & \text{if }y = x+1 \le d-1, \\
        x\mu(t) & \text{if }y = x-1 \ge 0, \\
        -((d-1-x) \lambda(t) + x\mu(t)) & \text{if }y = x, \\
        0 & \text{otherwise},
    \end{dcases}
\end{equation}
for non-negative integrable functions $\lambda(\cdot)$, $\mu(\cdot)$. Then, we have 
\begin{equation}
    P(s,t,x,\cdot) = \p(\text{Binom}(d-1-x, p_{01}(s,t)) + \text{Binom}(x, p_{11}(s,t)) \in \cdot)
\end{equation}
with $p_{01}(s,t)$ and $p_{11}(s,t)$ as in \eqref{eqn:two_state_transition_probs}; a discrete convolution then produces $P(s,t,x,\cdot)$ in closed form. As noted above, the value $X_2(t)$ may be interpreted as the number of two-state switching particles that occupy state 1 at time $t$, within an ensemble of $d$ such particles each independently following dynamics \eqref{eqn:two_state_Q}, when $X_2(0)$ particles are initially in state 1 at time 0. We refer to \cite{zheng1998note} for a fuller discussion of the closed forms available for the Prendiville process, and \cite{giorno2022time} for a recent extension of the model for which transient transition probabilities are also available in closed form.
For stationary models, closed-form expressions for $P(s,t) = \exp((t-s) Q)$ have been developed for several birth-death processes, for example, the $M/M/1/k$ queue (see \cite{takacs1962introduction}). 
In the majority of cases encountered in stochastic modeling, however, \eqref{eqn:m_ode} must be solved numerically, as we discuss in Section \ref{sec:computational_considerations}.
\end{remark}

We turn next to computing $\Var R(t) = \E R^2(t) - (\E R(t))^2$. We begin by deriving an ODE for $\E R^2(t)$. Observe that 
\begin{equation*}
\begin{split}
    \E R^2(t) &= \E \sum_{j=1}^{J(t)} \GG(T_j, Y_{j-1}, Y_j)^2 + \E \sum_{i=1}^{n(t)} \HH(t_i, X(t_i))^2 + \E \sum_{k=1}^{N(t)} \KK(\Lambda_k, X(\Lambda_k))^2 \\&\quad~+ 2 \E \int_0^t r(s,X(s))(R(t)-R(s)) ds \\
    &\quad~+ 2 \E \sum_{j=1}^\infty \GG(T_j, Y_{j-1}, Y_j) (R(t)-R(T_j)) I(T_j \le t) \\
    &\quad~+2 \E \sum_{i=1}^{n(t)} \HH(t_i,X(t_i)) (R(t)-R(t_i)) \\
    &\quad~+2 \E \sum_{k=1}^{\infty} \KK(\Lambda_k, X(\Lambda_k)) (R(t) - R(\Lambda_k)) I(\Lambda_k \le t).
\end{split}
\end{equation*}
But 
\begin{align*}
    \E [r(s,X(s))(R(t)-R(s) \mid \mathcal{F}_s] &= r(s,X(s))m(s,X(s)), \displaybreak[1] \\
    \E[\GG(T_j,Y_{j-1},Y_j)(R(t)-R(T_j))I(T_j \le t) \mid \mathcal{F}_{T_j}] &= \GG(T_j, Y_{j-1}, Y_j) m(T_j,Y_j) I(T_j \le t), \displaybreak[1]\\
    \E[\HH(t,X(t_i))(R(t)-R(t_i)) \mid \mathcal{F}_{t_i}] &= \HH(t_i, X(t_i)) m(t_i,X(t_i)), \displaybreak[1]\\
    \E[\KK(t,X(\Lambda_k))(R(t)-R(\Lambda_k)) \mid \mathcal{F}_{\Lambda_k}] &= \HH(\Lambda_k, X(\Lambda_k)) m(\Lambda_k,X(\Lambda_k)). \displaybreak[1]
\end{align*}
Hence
\begin{equation}
\label{eqn:comp_B_R2}
    \E R^2(t) = \int_0^t \E \varphi(s,X(s)) ds + \sum_{i=1}^{n(t)} \E \tilde h(t_i, X(t_i))
\end{equation}
where for $0 \le s \le t$ and $x \in S$, we define
\begin{equation*}
\begin{split}
    \varphi(s,x)& \triangleq 2r(s,x)m(s,x) + \sum_{(x,y) \in \mathcal B} Q(s,x,y) \Big(\int_0^\infty z^2G(s,x,y,dz) + 2 m(s,y) \int_0^\infty z G(s,x,y, dz)\Big) \\
    &\quad+ \beta(s,x) \Big(\int_0^\infty z^2 K(s,x,dz) + 2 m(s,x) \int_0^\infty z K(s,x,dz)\Big),
\end{split}
\end{equation*}
and
\begin{equation*}
    \tilde h(t_i, x) \triangleq \int_0^\infty z^2 H(t_i,x,dz) + 2m(t_i,x) h(t_i,x).
\end{equation*}
For $0 \le s \le t$, we define the vector $v(s) = (v(s,x) : x \in S)$ by 
\begin{equation*}
    v(s) \triangleq \int_s^t P(s,u)\varphi(u)du + \sum_{s \le t_i \le t} P(s,t_i) \tilde h(t_i).
\end{equation*}
Note that $\E_{0,x} R^2(t) = v(0,x)$, so by \eqref{eqn:E}, $\E R^2(t) = \mu v(0)$.
In view of the fact that \eqref{eqn:comp_B_R2} is identical to \eqref{eqn:comp_expectation_R} with $\varphi$ substituted for $\tilde r$ and $\tilde h$ substituted for $h$, we arrive at the following result.
\begin{proposition}
\label{prop:integral_equations_v}
    Assume \ref{a1}-\ref{a5}. For $1 \le i \le n_0$ and $t_{i-1} < s \le t_i$,
    \begin{equation*}
        v(s) = \int_s^{t_i} \varphi(u)du + \int_s^{t_i} Q(u)v(u) du + v(t_i),
    \end{equation*}
    where 
    $v(t_{n_0}) = 0$.
    Furthermore, $v(\cdot)$ is a.e.~differentiable, and for $0 \le s < t_i - t_{i-1}$,
    \begin{equation} \label{eqn:v_ode}
        \dd s v(t_i-s) = \varphi(t_i-s) + Q(t_i-s) v(t_i-s) \quad \text{a.e.},
    \end{equation}
    and $v(t_i) = \tilde h(t_i) + v(t_i{\scriptstyle +})$ for $1 \le i \le n_0$.
\end{proposition}
\noindent When $Q(\cdot)$ is continuous on $(t_{i-1},t_i]$ with a left limit at $t_{i-1}$, $m(\cdot)$ and $v(\cdot)$ are differentiable on $(t_{i-1},t_i]$ and satisfy a joint linear system of ordinary differential equations, analogous to \eqref{eqn:comp_c}.

Computing the variance by $\Var R(t) = \E R^2(t) - (\E R(t))^2 = v(0) - m(0)^2$ is subject to ``catastrophic cancellation'' of significant digits (see \cite[p.~9]{higham2012accuracy}), as $v(0)$ and $m(0)^2$ are typically much larger than $\Var R(t)$. It is more numerically stable to derive integral equations for $\Var R(t)$ directly. For $0 \le s \le t$, we define the scalar quantity
\begin{equation} \label{eqn:V}
    V(s) \triangleq \mu v(s) - (\mu m(s))^2,
\end{equation}
so that $V(0) = \Var R(t)$ and $V(t_{n_0}) = 0$.
\begin{proposition} \label{prop:integral_equations_c}
    Assume \ref{a1}-\ref{a5}. For $1 \le i \le n_0$ and $t_{i-1} < s \le t_i$,
    \begin{equation}
        V(s) = \int_s^{t_i} \big(\mu \varphi(u) + \mu Q(u) v(u) - 2 \mu m(u) (\mu \tilde r(u) + \mu Q(u) m(u)) \big) du + V(t_i),
    \end{equation}
    where $V(t_{n_0}) = 0$. Furthermore, $V(\cdot)$ is a.e.~differentiable, and for $0 \le s < t_i - t_{i-1}$,
    \begin{equation} \label{eqn:ode_V}
        \frac{d}{ds} V(t_i - s) = \mu \varphi(t_i - s) + \mu Q(t_i - s) v(t_i - s) - 2 \mu m(t_i - s) (\mu \tilde r(t_i - s) + \mu Q(t_i - s) m(t_i - s)) \quad \text{a.e.},
    \end{equation}
    where $V(t_i) = \mu \tilde h(t_i) - (\mu h(t_i))^2  + V(t_i {\scriptstyle +})$ for $1 \le i < n_0$.
\end{proposition}
\noindent We omit the proof of Proposition \ref{prop:integral_equations_c}, as it is straightforward and follows from the method used to prove Propositions \ref{prop:integral_equations_m} and \ref{prop:integral_equations_v}.

\begin{remark}
Differential equations for $m$ and $v$ are known in the life insurance literature (see \cite{norberg1992hattendorff, norberg1995differential}) under the name of the Thiele and Hattendorff differential equations, respectively. 
In particular, \cite{norberg1995differential} derived ODEs for $m$ and $v$, that allow for non-stationary rates, deterministic rewards, jump-time lump-sum deterministic rewards, and scheduled lump-sum deterministic rewards, subject to the requirement that $Q(\cdot)$, $r(\cdot)$, $g(\cdot)$ are piecewise continuous. Similar ODEs can be found in \cite{bladt2020matrix}, again requiring piecewise continuous rates. Propositions \ref{prop:integral_equations_m} and \ref{prop:integral_equations_v} generalize these results to cover random lump-sum rewards arriving at jump, ``external'', and ``scheduled'' times, and provides linear integral equation representations that apply to the case of measurable rates.
\end{remark}

\section{Computational Considerations}
\label{sec:computational_considerations}
We now discuss the complexity associated with computing $\p(R(t) > z)$ when $t$ is large and $z$ lies within a small integer multiple of $\sqrt{\Var R(t)}$ from $\E R(t)$. Suppose that our error tolerance in computing $\p(R(t) > z)$ is $\epsilon$. Based on existing Berry-Esseen and Edgeworth expansion theorems for finite state Markov chains with stationary transition probabilities \cite{nagaev1957limit,mann1996berry,dolgopyat2023berry},
we are led to expect that
$|\p(R(t) > z) - \p\big(\mathcal N(0,1) > (z - \E R(t)) / \sqrt{\Var R(t)}\big)| = \Theta ((\Var R(t))^{-1/2})$ as $t \to \infty$. Thus, we expect that the CLT approximation to $\p(R(t) > z)$ only is sensible when $\epsilon$ is of the order of $t^{-1/2}$ or larger. To compute $\p(\mathcal N(0,1) > (z - \E R(t))/\sqrt{\Var R(t)})$ to error tolerance $\epsilon$, it is easy to see by a Taylor expansion argument that $\E R(t)$ and $\Var R(t)$ must be computed to error $\Theta (\epsilon)$.

For the non-stationary setting under consideration in this paper, the two main approaches for computing $\E R(t)$ and $\Var R(t)$ to a required accuracy $\Theta(\epsilon)$ are numerically solving differential equations and Monte Carlo simulation. We discuss these two approaches in order and compare their time complexities.

Suppose that $Q(\cdot)$ is $k$-times continuously differentiable on each interval $(u_{i-1},u_i)$, with all $k$ derivatives having finite-valued right and left derivatives at $u_{i-1}$ and $u_i$, respectively. Then, we can apply a $k$th order Runge-Kutta method to solve the ODEs for $m(\cdot)$, and $v(\cdot)$ over each such interval, so that the difference increment $h$ used in the associated time-stepping solver induces an associated numerical error of order $h^k$ over each interval; see \cite[p.~160]{hairer1987solving} for a theorem on the global error analysis for $k$th order Runge-Kutta methods. To achieve an error tolerance of $\epsilon$, we must therefore take $h = \Theta(\epsilon^{1/k})$. Thus, we use $\Theta(t\epsilon^{-1/k})$ time steps to numerically compute $m(\cdot)$ and $v(\cdot)$ over $[0,t]$. At an error tolerance $\epsilon = \Theta(t^{-1/2})$, this implies $\Theta(t^{1 + 1/2k})$ time steps. Each time-step involves a bounded number of matrix-vector multiplications, leading (in the absence of sparsity) to a per step complexity of $\Theta(d^2)$ floating point operations (``flops'') and a total complexity of $\bigO(d^2 t^{1+1/2k})$ flops for our normal approximation to $\p(R(t) > z)$. If the rate matrices $Q(\cdot)$ are sufficiently sparse that the cost of matrix-vector multiplication is $\bigO(d)$, a total complexity of $\bigO(d t^{1+1/2k})$ is achievable. The complexity of computing $V(\cdot)$ by solving \eqref{eqn:ode_V} depends on the sparsity of the initial distribution $\mu$. If $\mu$ contains $O(1)$ nonzero entries, the complexity of solving \eqref{eqn:ode_V} is $O(t^{1+1/2k})$, while if $\mu$ contains $O(d)$ nonzero entries, this complexity matches that of solving for $m(\cdot)$ and $v(\cdot)$.

This can be easily compared to Monte Carlo simulation as a means of computing $\p(R(t)>z)$ to error tolerance $\epsilon = \Theta(t^{-1/2})$. The number of independent simulations of $X$ over $[0,t]$ should be of order $\Theta(\epsilon^{-2})$, in view of the square root convergence rate of the Monte Carlo method; see \cite{asmussen2007stochastic}. For our analysis here, we additionally assume that the cost of simulating the random variable $X(T_k)$ at a jump time $T_k$ is $\bigO(1)$, as is the case in many practical settings of interest (for example, birth-death processes, batch Markov arrival processes, and bulk queues). Thus each simulation over $[0,t]$ requires $\Theta((\bar \lambda + \bar \beta) t)$ computational effort (measured in flops). The factor $(\bar \beta + \bar \lambda)$ appears because the time complexity of a simulation is proportional to the number of jumps plus the number of exogenous arrivals that must be generated per unit of ``wall clock'' time. Therefore the total complexity of the Monte Carlo method needed to compute $\p(R(t) > z)$ to error $\Theta(t^{-1/2})$ is $\Theta ((\bar \lambda + \bar \beta) t \epsilon^{-2}) = \Theta((\bar \lambda + \bar \beta) t^2)$, at $\epsilon = \Theta (t^{-1/2})$. It follows that when $\epsilon = \Theta(t^{-1/2})$ and $d$ is of moderate size, our CLT approximation is considerably faster at computing $\p(R(t) > z)$ to a reasonable error tolerance (of order $t^{-1/2}$ or larger) as compared to Monte Carlo simulation. Furthermore, Monte Carlo simulation becomes slow when $(\bar \lambda + \bar \beta)$ is large. 
As an additional advantage, our computation of $m(t)$ and $v(t)$ yields $\E[R(t) \mid X(0)=x]$ and $\E[R^2(t) \mid X(0)=x]$ for each $x \in S$, whereas to compute these conditional quantities, a crude implementation of the Monte Carlo approach would require $\bigO(t^2)$ independent simulations for each starting point $x \in S$.

Alternatively, one may be interested in directly computing the quantity $\p(R(t) > z)$, for $z \in \R$. A system of linear integral equations and corresponding linear first-order integro-PDE for computing the distribution of $R(t)$, first developed in \cite{hesselager1996probability} in the case of deterministic lump-sum reward sizes, is derived as follows. 
For $0 \le s \le t$, $x \in S$, and $z \in \R$, put
\begin{equation*}
    u(s,x,z) = \p_{t-s,x}(R(t)-R(t-s)>z).
\end{equation*}
We observe that the distribution of $R(t)-R(t-s)$ is a mixture of components, one of which is a point mass corresponding to the case that the process $X$ does not jump on the interval $(t,t+s]$. Note
\begin{equation*}
    R(t)-R(t-s) = \int_{t-s}^t r(v,x) dv \triangleq \mathcal{I}(s,x)
\end{equation*}
on $\{(J+N)(t) - (J+N)(t-s)= 0 \}$, so
\begin{align*}
    u(s,x,\mathcal I(s,x)) - u(s,x,\mathcal I(s,x){\scriptstyle -}) &= -\p_{t-s,x}((J+N)(t) - (J+N)(t-s)=0) \\
    &= -\exp\Big(-\int_{t-s}^t (\exitrate + \beta)(v,x) dv\Big).
\end{align*}
For $z \ne \mathcal I(s,x)$,
\begin{align*}
    &\p_{t-s,x}(R(t)-R(t-s) > z) \displaybreak[1]\\
    &=\E_{t-s, x} u\Big(s-h,X(t-s+h), \\ &\qquad\qquad\quad z-\int_{t-s}^{t-s+h} r(v,X(v)) dv - \sum_{j=J(t-s)+1}^{J(t-s+h)} \GG(T_j, Y_{j-1}, Y_j) - \sum_{k=N(t-s)+1}^{N(t-s+h)} \KK(\Lambda_k, X(\Lambda_k))\Big) \displaybreak[1]\\
    &= \E_{t-s, x} u\big(s-h, x, z-\int_{t-s}^{t-s+h} r(v,x) dv\big) I((J+N)(t-s+h)=(J+N)(t-s)) \displaybreak[1]\\
    &\quad+\E_{t-s, x} u\big(s-h,X(t-s+h), z-\int_{t-s}^{t-s+h} r(v,X(v)) dv - \sum_{j=J(t-s)+1}^{J(t-s+h)} g(T_j, Y_{j-1}, Y_j)\big) \displaybreak[1]\\ & \quad \qquad \qquad \cdot I\big((J+N)(t-s+h)\ge (J+N)(t-s)+ 1\big) \displaybreak[1]\\
    &= \big(u(s,x,z) - h \pp s u(s,x,z) - h r(t-s,x) \pp z u(s,x,v)\big) \big(1-(\exitrate+\beta)(t-s,x) h + o(h)\big) \displaybreak[1]\\
    &\quad+h\sum_{(x,y) \in \mathcal B} Q(t-s,x,y) \int_\R u(s,x,z - w) G(t-s,x,y,dw) \\
    &\quad+h \beta(t-s,x) \int_\R u(s,x,z-w) H(t-s,x,dw) + o(h),
\end{align*}
where we write $f(h) = o(h)$ if $f(h) / h \to 0$ as $h \to 0$.
Thus, 
\begin{equation}
\label{eqn:integro_pde}
\begin{split}
    \pp s u(s,x,z) &= \sum_y Q(t-s,x,y) \int_\R \big(u(s,x,z-w)-u(s,x,z)\big) G(t-s,x,dw) \\
    &\quad+\beta(t-s,x) \int_\R \big(u(s,x,z-w) - u(s,x,z)\big) H(t-s,x,dw)\\ &\quad- r(s,x) \pp z u(s,x,z)
\end{split}
\end{equation}
for $z \ne \mathcal I(s,x)$, where we set $g(v,x,x)=0$ for $x \in S$, $v \ge 0$. To determine $u$, this equation is solved subject to
\begin{equation*}
    u(0,x,z) = \begin{dcases}
        1 \quad \text{ if } z < 0, \\ 0 \quad \text { if } z \ge 0,
    \end{dcases}
\end{equation*}
for $x \in S$. Then, $u(t,x,z) = \p_{0,x}(R(t)>z)$. A computational scheme for approximating $u$ proceeds from a discretization of \eqref{eqn:integro_pde}; for the computational issues involved we refer the reader to the discussion in \cite[section 4, pp.~39--40]{hesselager1996probability}. 

\section{The Periodic Case}
\label{sec:periodic}
An especially important setting for non-stationary models arises when it is possible to ignore secular trends affecting the system dynamics and safely assume that $Q(\cdot)$ is periodic. For example, in a customer service setting, one may model the system as periodic over one week intervals. To address this special case, we may without loss of generality assume that the period equals one time unit:
\begin{assumption}
    \label{assumption:periodic}
    For all $t \ge 0$, $Q(t+1) = Q(t)$, $r(t+1)=r(t)$, $\beta(t+1)=\beta(t)$, $G(t+1) = G(t)$, $H(t+1) = H(t)$, and $K(t+1)=K(t)$. Furthermore, the $t_i$'s are periodic with period 1, and the $m$ points $t_1, \dots, t_m$ lying in $(0,1]$ satisfy $0 < t_1 < \cdots < t_m < 1$.
\end{assumption}
\noindent We work under \ref{assumption:periodic} for the remainder of this section. We note that it is not restrictive to select the beginning of the period (i.e.~$t=0$) such that $0 < t_1 < t_m < 1$. Under \ref{assumption:periodic}, note that
\begin{equation*}
    P(n+t,n+t+s) = P(t,t+s)
\end{equation*}
for $0 \le t \le 1$, $s \ge 0$, $n \in \Z_+$. Furthermore, since $|S| < \infty$ and $Q(\cdot)$ is irreducible, $P(0,1)$ has a unique stationary distribution $\pi(0)$. In addition, $P(t,t+1)$ has a unique stationary distribution $\pi(t)$ for $0 \le t \le 1$, and $\pi(t) = \pi(0) P(0,t)$.
In this periodic setting, the martingale representation of Section 3 simplifies significantly. 

Put
\begin{align}
\label{eqn:triple_star_periodic}
    \alpha &\triangleq \int_0^1 \pi(s) \tilde r(s) ds + \sum_{i=1}^m \pi(t_i)h(t_i), \\
    \Delta R(n) &\triangleq R(n) - R(n-1) \nonumber \\
    &= \int_{n-1}^n r(s,X(s)) ds + \sum_{j=J(n-1)+1}^{J(n)} \GG(T_j, Y_{j-1}, Y_j) \nonumber \\
    &\quad+ \sum_{n-1 < t_i \le n} \HH(t_i, X(t_i)) + \sum_{k=N(n-1)+1}^{N(n)} \KK(\Lambda_k, X(\Lambda_k)), \nonumber \\
    r^* &\triangleq \int_0^1 P(0,s) \tilde r(s) ds + \sum_{i=1}^m P(0,t_i) h(t_i).\nonumber
\end{align}
Suppose $k$ is a solution to Poisson's equation for the Markov chain $(X(n): n \ge 0)$, namely
\begin{equation}
\label{eqn:double_star_periodic_poisson}
    (P(0,1)-I)k = -(r^* - \alpha e).
\end{equation}
Then, because $P(0,1)$ is aperiodic under \ref{a1}, \ref{a2} (since Proposition 1 implies it is strictly positive), $P(0,1)^n = P(0,n) \to \Pi(0)$, where $\Pi(0)$ has identical rows given by $\pi(0)$, and $k$ can be taken as
\begin{equation*}
    k = \sum_{n=0}^\infty P(0,n) r_c^*,
\end{equation*}
with $r_c^* \triangleq r^* - \alpha e$. For $t \ge 0$, put
\begin{align*}
    \rho(t) &\triangleq \int_t^{\ceil t } P(t,u)(r(u)+\gamma(u)) du + \sum_{i=n(t)+1}^{n( \ceil t)} P(t,t_i) h(t_i) - \alpha(\ceil t - t) + \sum_{j=0}^\infty P(t, \ceil t +j) r^*_c \\
    &= \int_t^{\ceil t } P(t,u) (r(u) + \gamma(u)) du + \sum_{i=n(t)+1}^{n(\ceil t)} P(t,t_i)h(t_i) - \alpha (\ceil t - t) + P(t, \ceil t)k,
\end{align*}
and note that $\rho(t) = \rho(t+1)$ for $t \ge 0$ and $\rho(n) = k$ for $n \ge 0$.
\begin{theorem}
\label{thm:clt_periodic}
    Suppose \ref{a1}-\ref{a5} and \ref{assumption:periodic}. Then,
    \begin{equation*}
        R(t) - \alpha t + \rho(t, X(t))
    \end{equation*}
    is a martingale adapted to $(\mathcal F_t : t \ge 0)$. Furthermore,
    \begin{equation*}
        \frac{R(t) - \alpha t}{\sqrt t} \Rightarrow \sigma \mathcal N(0,1)
    \end{equation*}
    as $t \to \infty$, where 
    \begin{equation*}
        \sigma^2 = \pi(0) \int_0^1 P(0,t) \xi(t) dt,
    \end{equation*}
    and $\xi(t) = (\xi(t,x): x \in S)$ has entries given by
    \begin{equation*}
        \xi(t,x) \triangleq \sum_{(x,y) \in \mathcal B} Q(t,x,y) \int_0^\infty (z + \rho(t,y) - \rho(t,x))^2 G(t,x,y,dz) + \beta(t,x) \int_0^\infty z^2 K(t,x,dz).
    \end{equation*}
\end{theorem}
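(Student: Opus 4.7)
My plan is to (i) verify directly that $Z(t) := R(t) - \alpha t + \rho(t, X(t))$ is an $(\mathcal F_t)$-martingale, and (ii) obtain the CLT from Theorem \ref{thm:clt} after matching the centering $\alpha t$ to $\E R(t)$ and identifying $\Var R(t) \sim \sigma^2 t$.

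For step (i), I would start at integer times. Since $\rho(n, \cdot) = k$ for $n \in \Z_+$, the increment $\E[Z(n+1) - Z(n) \mid \mathcal F_n]$ at $X(n) = x$ reduces via Chapman--Kolmogorov and periodicity ($P(n, n+u) = P(0, u)$) to $r^*(x) - \alpha + (P(0,1) k)(x) - k(x)$, which vanishes precisely by the Poisson equation \eqref{eqn:double_star_periodic_poisson}. To handle non-integer increments, I would show that $\rho$ satisfies the backward equation $\partial_s \rho(s) = \alpha e - \tilde r(s) - Q(s) \rho(s)$ (a.e.\ $s$) on each subinterval disjoint from the $t_i$'s, with jumps $\rho(t_i{\scriptstyle-}) - \rho(t_i{\scriptstyle+}) = h(t_i)$ at the scheduled reward times. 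This can be verified by differentiating the defining formula of $\rho$ under the Kolmogorov backward equation \eqref{eqn:kbe_diff} and using $Q(s) e = 0$ to annihilate the $-\alpha(\ceil s - s) e$ contribution. Once $\rho$ is identified as this periodic analog of the solution to Poisson's equation, the martingale property of $Z$ follows by the same Dynkin--It\^o argument that proves Theorem \ref{thm:martingale}, with $\rho$ replacing $\nu$ and $\alpha$ replacing $\E \tilde r(t, X(t))$.

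For step (ii), taking expectations in the martingale identity gives $\E R(t) - \alpha t = \E \rho(0, X(0)) - \E \rho(t, X(t))$, which is $\bigO(1)$ because $\rho$ is uniformly bounded on $\R_+ \times S$ (the series defining $k$ converges geometrically by Proposition \ref{prop:mixing}). Hence $(R(t) - \alpha t)/\sqrt t$ and $(R(t) - \E R(t))/\sqrt t$ share the same weak limit. Next, $\Var R(t) \sim \E[M_Z](t)$ with $M_Z(t) := Z(t) - Z(0)$ by a direct rerun of Proposition \ref{prop:asymptotic_variance}, and Proposition \ref{prop:qv} (whose proof uses only the martingale property) applies verbatim with $\nu$ replaced by $\rho$, yielding
\begin{equation*}
[M_Z](t) = \sum_{j=1}^{J(t)} (\GG(T_j, Y_{j-1}, Y_j) + \Delta \rho(T_j))^2 + \sum_{j=1}^{N(t)} \KK(\Lambda_j, X(\Lambda_j))^2,
\end{equation*}
whose compensator equals $\int_0^t \xi(s, X(s))\, ds$ by Jacod's formula \cite{jacod1974multivariate}. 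Periodic ergodicity then completes the variance asymptotics: writing $s = n + u$ with $u \in [0,1)$, Proposition \ref{prop:mixing} yields $\mu P(0, n+u) = \mu P(0, n) P(0, u) \to \pi(0) P(0, u)$ uniformly in $u$, so Cesaro averaging combined with bounded convergence gives $t^{-1} \int_0^t \E \xi(s, X(s))\, ds \to \pi(0) \int_0^1 P(0, u) \xi(u)\, du = \sigma^2$. With $\Var R(t) \sim \sigma^2 t$ in hand and $\sigma^2 > 0$ under \ref{assumption:a6} (as in Proposition \ref{prop:asymptotic_variance}), Theorem \ref{thm:clt} delivers the CLT.

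The main obstacle is the formal verification of the backward equation for $\rho$ in step (i): one must differentiate through the sum over scheduled times $t_i \in (t, \ceil t]$ in the definition of $\rho(t)$, which introduces boundary contributions at each $t_i$, and handle the $P(t, \ceil t) k$ term, which produces the $-\alpha$ drift via the Poisson equation applied across integer boundaries. Once $\rho$ is identified as this ``periodic non-stationary solution of Poisson's equation,'' the remainder of the argument is essentially assembly of pieces already developed in Sections \ref{sec:mtg}--\ref{sec:clt}.
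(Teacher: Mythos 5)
Your construction matches the paper's: the martingale identity for $\rho$ via the Poisson equation \eqref{eqn:double_star_periodic_poisson} and the semigroup/tower property, the quadratic variation formula with $\Delta\rho$ in place of $\Delta\nu$, its compensator $\int_0^t \xi(s,X(s))\,ds$, and the periodic ergodic average producing $\sigma^2 = \pi(0)\int_0^1 P(0,u)\xi(u)\,du$ are all exactly the ingredients of the paper's argument. (Incidentally, the step you flag as the ``main obstacle'' --- the backward equation for $\rho$ --- can be bypassed entirely: the identity $\rho(t,x) = \E_{t,x}[R(t+s)-R(t)-\alpha s] + (P(t,t+s)\rho(t+s))(x)$ follows directly from the definition of $\rho$, Chapman--Kolmogorov, and the tower property, first for $t+s \le \ceil t$ and then across integer boundaries via the Poisson equation; no differentiation through the $t_i$'s is needed.)

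There is, however, one genuine gap in your final step. Theorem \ref{thm:clt_periodic} assumes only \ref{assumption:measurable_rates}--\ref{assumption:a5} and \ref{assumption:periodic}; it does \emph{not} assume \ref{assumption:a6}. You close the argument by invoking Theorem \ref{thm:clt}, which requires \ref{assumption:a6}, and your appeal to Proposition \ref{prop:asymptotic_variance} for $\Var R(t) \sim \E[M_Z](t)$ likewise leans on the linear growth \eqref{eqn:qv_55}, which is where \ref{assumption:a6} enters. So your chain proves the theorem only when $\sigma^2 > 0$. The paper avoids this by applying the martingale CLT directly to $M_p$, using the almost sure convergence $[M_p](t)/t \to \sigma^2$ (which follows from the LLN applied to $[M_p]$ as a reward functional, plus the compensator identity), so the degenerate case is handled uniformly. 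To repair your version, either do the same, or add the separate observation that when $\sigma^2 = 0$ one has $\E M_p(t)^2 = \E[M_p](t) = o(t)$, hence $M_p(t)/\sqrt t \convp 0$ and, by boundedness of $\rho$, $(R(t)-\alpha t)/\sqrt t \convp 0 = \sigma\,\mathcal N(0,1)$. This is a small fix, but as written the proposal does not cover the full hypothesis set of the theorem.
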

\begin{proof}
    We note that for $t, s \ge 0$,
    \begin{equation*}
        \begin{split}
        \rho(t,x) &= \E[R(t+s) - R(t) - \alpha s \mid X(t) = x] + (P(t,t+s)\rho(t+s))(x).
        \end{split}
    \end{equation*}
    As with the role of \eqref{eqn:nu_1_increment} in the proof of Theorem \ref{thm:martingale}, this implies that
    \begin{equation*}
        M_p(t) \triangleq R(t) - \alpha t + \rho(t,X(t))
    \end{equation*}
    is an $\mathcal F_t$-martingale. Furthermore,
    \begin{equation*}
        [M_p](t) = \sum_{j=1}^{J(t)} (\GG(T_j,Y_{j-1},Y_j) + \rho(T_j,Y_j)-\rho(T_j,Y_{j-1}))^2 + \sum_{k=1}^{N(t)} \tilde \KK (\Lambda_k, X(\Lambda_k)),
    \end{equation*}
    and $[M_p](t) - \int_0^t \xi(s,X(s)) ds$ is also a martingale, so that
    \begin{equation*}
        \frac{1}{t} [M_p](t) \to \sigma^2 \quad \text{a.s.}
    \end{equation*}
    as $t \to \infty$. An easy application of the martingale CLT then yields the CLT for $R(t)$.
\end{proof}
\begin{remark}
    A key difference between the martingale used here and that introduced in Section \ref{sec:mtg} is that $M_p(t)$ centers $R(t)$ by $\alpha t$, whereas $M(t)$ centers $R(t)$ by $\E R(t)$. In particular, unlike $\nu(t,x)$, we cannot define
    \begin{equation*}
        \rho(t,x) = \lim_{s \to \infty} \E_{t,x} [R(t+s)-R(t)-\alpha s]
    \end{equation*}
    because this limit does not exist in general due to the periodicity of $Q(\cdot)$. Rather, we have defined $\rho(t,x)$ so that
    \begin{equation*}
        \rho(t,x) = \E_{t,x} [R(\ceil t) - R(t) - \alpha(\ceil t - t)] + (P(t, \ceil t)k)(x).
    \end{equation*}
\end{remark}
\begin{remark}
    The function $(\rho(t): t \ge 0)$ is periodic, hence it is characterized by $(\rho(t) : 0 \le t \le 1)$. Also,
    \begin{equation}
    \label{eqn:double_star_periodic_rho}
        \rho(t,x) = \E_{t,x} [R(1 - t) - R(0) - \alpha(1-t)] + (P(t,1) \rho(0))(x)
    \end{equation}
    for $0 \le t \le 1$, $x \in S$. As with $m(\cdot)$ in Section \ref{sec:computation_mean_var}, we add $t_0,t_{m+1}$ to $0 = t_0 < t_1 < \cdots t_m < t_{m+1} = 1$. We specify a function $\bar \rho$, left continuous with right limits, which satisfies an identical integral equation on each interval $(t_{i-1}, t_i]$ for $1 \le i \le m+1$, namely
    \begin{equation}
    \label{eqn:star_periodic}
        \bar \rho(t) = \int_t^{t_i} (\tilde r (u) - \alpha) du + \int_{t}^{t_i} Q(u) \bar \rho(u) du + \bar \rho(t_i)
    \end{equation}
    for $t_{i-1} < t < t_i$, where $\bar \rho(t_{m+1}) =\bar \rho(1) = \bar \rho(0)$, and $\bar \rho(t_i) = \bar \rho(t_i {\scriptstyle +}) + h(t_i)$ for $1 \le i \le m$. In other words, $\bar \rho(\cdot)$ satisfies a linear integral equation with a periodic boundary condition, and $\rho(t) = \bar \rho(t)$ for $t \notin \{t_i : i \ge 0\}$. We work with the left continuous $\bar \rho$ in order that we may numerically integrate the Kolmogorov backward equations from right to left over the intervals $(t_i, t_{i+1}]$. In addition to solving for $\bar \rho(\cdot)$, it must be recognized that \eqref{eqn:star_periodic} also includes the constant $\alpha$, which can be viewed as an unknown, along with $\bar \rho(\cdot)$, within the equation \eqref{eqn:star_periodic}. Of course, \eqref{eqn:double_star_periodic_rho} implies that
    \begin{equation*}
        \rho(0,x) = \E_{0,x} R(1) - \alpha + (P(0,1)\rho(0))(x).
    \end{equation*}
    Multiplying both sides by $\pi(0,x)$ and summing over $x$, we conclude that \eqref{eqn:double_star_periodic_rho} is solvable with $\rho(0)=\rho(1)$ only if $\alpha$ satisfies \eqref{eqn:triple_star_periodic} (since $\pi(0) \rho(0) = \pi(0) P(0,1) \rho (0)$).
\end{remark}

According to the above remark, computing $\bar \rho(\cdot)$ on $(0,1]$ along with $\alpha$ involves solving the Kolmogorov backward (integral) equations according to a periodic boundary condition, and subject to impulses at the fixed times $t_i$. To avoid the need to use a numerical solution method capable of solving ODEs with unknown parameter subject to periodic boundary conditions and impulses at fixed times (see \cite[p.~322]{ascher1988numerical}),
we instead first solve the matrix-valued backward equations
\begin{equation*}
    P(t,1) = I + \int_t^1 Q(u) P(u,1) du
\end{equation*}
for $0 \le t \le 1$. Typically this computational step is accomplished using the differential form of these equations. With $P(0,1)$ in hand, we then compute $\pi(0)$ as a probability mass function solution of $\pi(0) = \pi(0) P(0)$. We next solve for
\begin{equation*}
    m(t,x) = \E_{t,x} [R(1) - R(t)]
\end{equation*}
for $0 \le t \le 1$ and $x \in S$ subject to $m(1) = 0$; see Proposition \ref{prop:integral_equations_m}. The quantity $\alpha$ can then be computed as 
\begin{equation*}
    \alpha = \pi(0) m(0).
\end{equation*}
Noting that $r^* = m(0)$, the next step is solving \eqref{eqn:double_star_periodic_poisson} for $k$. This allows us to then calculate $\rho(t)$ over $t \in [0,1]$ via
\begin{equation*}
    \rho(t) = m(t) - \alpha(1-t) + P(t,1) k
\end{equation*}
for $0 \le t \le 1$. (If the matrices $(P(t,1) : 0 < t < 1)$ were not saved while computing $P(0,1)$, we can compute $P(t,1) k$ by solving the backward equations.) Then, $(\xi(t): 0 \le t \le 1)$ is easily calculated, after which
\begin{equation*}
    \chi(t) \triangleq \int_t^1 P(t,u) \xi(u) du
\end{equation*}
can be computed over $t \in [0,1]$ by again solving the backward equations
\begin{equation*}
    \chi(t) = \int_t^1 \xi(u) du + \int_t^1 Q(u) \chi(u)du
\end{equation*}
for $0 \le t \le 1$. Finally, $\sigma^2 = \pi(0) \chi(0)$.
\begin{remark}
    Note that computing $\alpha$ and $\sigma^2$ via this approach avoids the need to calculate $\E R(t)$ and $\Var R(t)$ as in our earlier Theorem \ref{thm:clt}. 
\end{remark}

This yields a computational complexity for computing the centering and scaling constants in our CLT approximation to $R(t)$ that is independent of $t$. The periodic approach followed in this section requires the computation of $P(0,1)$, which entails solving a matrix-valued differential equation. By contrast, Theorem \ref{thm:clt} involves solving vector-valued differential equations for $\E R(t)$ and $\Var R(t)$. Thus, if $d$ is large relative to $t$, the CLT of Theorem \ref{thm:clt} may be preferable to the periodic CLT of Theorem \ref{thm:clt_periodic}. However, when $t$ is large, the periodic approach introduced here is typically more efficient. 

\section{Service Systems with Resetting}
\label{sec:reset}
So far in this paper we have developed results that apply to service systems in which there are no regularly scheduled times at which all the work present in the system is cleared. For example, manufacturing facilities and continuously operating call centers do not deterministically clear all their accumulated work at regular intervals. This framework is relevant for many, but not all, operation settings. 

A common alternative is a service system in which the system is ``reset'' or ``cleared'' at fixed times regardless of the state it occupied immediately prior to the resetting time. For example, the checkout at a grocery store is cleared at the end of each day, as is the security area of an airport when it closes for the day. However, the dynamics of customer arrival to the checkout throughout the day may still be subject to daily periodic fluctuations. Furthermore, secular trends in consumer behavior may imply that the queueing dynamics of the grocery checkout are not identical from one day to the next. In order to address such settings, the LLN and CLT we developed for the ``always open'' setting must be modified to allow for ``resetting'' behavior in the system.

We work under \ref{a1}-\ref{a5}, and assume that $X$ is independently reset at each time $n \in \Z_+$ according to a probability mass function $\mu_n \triangleq (\mu_n(x) : x \in S)$ on the states. In this setting, $X_n = (X(n+t) : 0 \le t < 1)$ is an independent jump process with rate matrix $(Q(n+t) : 0 \le t < 1)$ for $n \ge 0$. We can then define $R(t)$ as in Section \ref{sec:mtg} of this paper. We assume that none of the $t_i$'s are integer-valued. In this case, $R(n) = R(n {\scriptstyle +}) = R(n {\scriptstyle -})$ a.s., and we can write $R(n) = \sum_{i=1}^n \Delta R(i)$, where $\Delta R(i) = R(i) - R(i-1)$ for $i \ge 1$. Furthermore, $\E R(n) = \sum_{i=1}^n \E \Delta R(i)$ and $\Var R(n) = \sum_{i=1}^n \Var \Delta R(i)$, due to the independence created by resetting.
\begin{theorem}
    Under \ref{a1}-\ref{a5} and the additional assumption that 
    \begin{equation}
    \label{eqn:positive_var_condition}
        \inf_{t \ge 0} \Big[ \sum_{(x,y) \in \mathcal B} \Var \GG(t,x,y) + \sum_{x \in S} \Var \HH(t,x) + \sum_{x \in S} \Var \KK(t,x) \Big] > 0,
    \end{equation}
    we have
    \begin{equation}
    \label{eqn:reset_clt}
        \frac{R(n) - \E R(n)}{\sqrt {\Var R(n)}} \Rightarrow \mathcal{N}(0,1)
    \end{equation}
    as $n \to \infty$.
\end{theorem}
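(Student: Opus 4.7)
The plan is to exploit the independence of the increments $\Delta R(i) = R(i) - R(i-1)$ created by the resetting mechanism. Since $X$ is reset independently at each integer time and none of the $t_k$'s are integer-valued, the blocks of the reward process over disjoint intervals $[i-1, i)$ are independent. Hence $R(n) - \E R(n) = \sum_{i=1}^n (\Delta R(i) - \E \Delta R(i))$ is a sum of independent, mean-zero, non-identically distributed random variables, and I would invoke the Lyapunov CLT for triangular arrays with exponent $\delta = 2$. This reduces the proof to (i) a uniform fourth-moment bound on $\Delta R(i) - \E \Delta R(i)$ and (ii) a linear lower bound on $\Var R(n)$.

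For (i), observe that $|\Delta R(i)| \le \|\bar r\|_\infty + \|c\|_\infty(1/\ubar t + 1 + \Delta J(i) + \Delta N(i))$, where $\Delta J(i) \triangleq J(i) - J(i-1)$ and $\Delta N(i) \triangleq N(i) - N(i-1)$. The bounds \eqref{eqn:clt_113} and \eqref{eqn:clt_115} developed in the proof of Theorem \ref{thm:clt} give $\sup_i \E(\Delta J(i) + \Delta N(i))^4 < \infty$, so that $M_4 \triangleq \sup_i \E(\Delta R(i) - \E \Delta R(i))^4 < \infty$.

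For (ii), I would apply the law of total variance conditional on the restrictions $X|_{[i-1,i]}$ and $N|_{[i-1,i]}$. The conditional variance decomposes into independent contributions from $\GG$, $\HH$, and $\KK$, and taking expectations via the intensity identities appearing in the proof of Proposition \ref{prop:asymptotic_variance} expresses $\E \Var(\Delta R(i) \mid X, N)$ as a sum of time integrals (with weights $\p(X(s)=x)Q(s,x,y)$, $\p(X(t_k)=x)$ for $t_k \in (i-1,i]$, and $\p(X(s)=x)\beta(s,x)$) of $\Var \GG$, $\Var \HH$, and $\Var \KK$ respectively. Applying Proposition \ref{prop:mixing} with some $u_0 < 1/2$ starting from the reset distribution $\mu_{i-1}$ yields $\p(X(i-1+s)=x) \ge \delta \pi_*(x)$ for $s \ge u_0$, uniformly in $i$ and in $\mu_{i-1}$. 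Combining this with \ref{assumption:bounded_rates} and the hypothesis \eqref{eqn:positive_var_condition} gives $\inf_i \Var \Delta R(i) \ge c > 0$, and hence $\Var R(n) \ge cn$. With (i) and (ii) in hand, the Lyapunov ratio satisfies $(\Var R(n))^{-2} \sum_{i=1}^n \E(\Delta R(i) - \E \Delta R(i))^4 \le nM_4/(cn)^2 = O(1/n) \to 0$, which delivers \eqref{eqn:reset_clt}.

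The main obstacle is step (ii): condition \eqref{eqn:positive_var_condition} bounds only the \emph{sum} of the three variance terms from below, while each is activated through a different mechanism---jump rates $Q(\cdot,x,y)$ for $\GG$, occurrence of scheduled times in the interval for $\HH$, and Poisson intensity $\beta$ for $\KK$. A careful case analysis, splitting on which term dominates at each $t$ and using Proposition \ref{prop:mixing} to guarantee that the required state is visited with positive probability uniformly in the reset distribution $\mu_{i-1}$, is needed to translate the variance hypothesis into the desired uniform lower bound on $\Var \Delta R(i)$.
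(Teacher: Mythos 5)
Your proposal follows the paper's proof essentially step for step: the independence of the increments $\Delta R(i)$ created by resetting reduces the claim to a Lyapunov CLT, verified by a uniform upper moment bound on the increments (you use fourth moments where the paper uses third, which is immaterial) together with a uniform lower bound $\Var \Delta R(i) \ge c > 0$. The one step you flag as delicate --- extracting that lower bound from \eqref{eqn:positive_var_condition}, which controls only the \emph{sum} of the three variance terms, each activated by a different mechanism --- is exactly the step the paper disposes of with ``it is evident,'' so your sketch is, if anything, more explicit than the published argument.
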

\begin{proof}
    Because $R(k) = \sum_{i=1}^k \Delta R(i)$, with the $\Delta R(i)$'s independent, it suffices to verify the Lyapunov condition $\lim_{k \to \infty} (\Var R(k))^{-3/2} \sum_{i=1}^k \E |\Delta R(i)|^3$; see \cite[Theorem 27.3, p.~371]{billingsley1995probability}. Using \ref{a2} and \eqref{eqn:positive_var_condition}, it is evident from the independent resetting behavior of $X$ that there exists $c > 0$ such that $\Var \Delta R(i) > c$ for all $i \ge 1$. By \ref{a3}-\ref{a5} and routine calculations, we may bound $\E |\Delta R(i)|^3< C < \infty$ for all $i \ge 1$. Then $(\Var R(k))^{-3/2} \sum_{i=1}^k \E |\Delta R(i)|^3 = \bigO( k^{-1/2})$. Therefore by Lyapunov's CLT, we conclude that \eqref{eqn:reset_clt} holds.
\end{proof}
\begin{remark}
    The computation of $\E \Delta R(n)$ and $\Var \Delta R(n)$ in this setting proceeds as in Section \ref{sec:computation_mean_var}. We observe that the independence of the $\Delta R(n)$'s makes it trivial to parallelize the computation of $\E \Delta R(n)$ and $\Var \Delta R(n)$.
\end{remark}

\section{Numerical Examples} \label{sec:numerics}
To facilitate the study of the quality of the CLT approximation for the distribution of $R(t)$, we simulate the long-time behavior of the reward functional for several non-stationary stochastic models. We consider models with secular trends, as well as fully periodic models. 
In particular, we consider the Prendiville model discussed in Remark \ref{remark:prendiville}, the $M_t / M_t / 1 / k$ queue, and the multi-server queue with staffing changes and time-varying traffic. The transition rates and reward structures for each model are chosen so that \ref{a1}-\ref{a5} are satisfied.
For each model, we simulate 10,000 iid sample paths over a fixed interval $[0,t]$ to obtain a simulated empirical distribution for $R(t)$. We also solve the system of ODEs presented in Section \ref{sec:computation_mean_var} to obtain numerical solutions to $m(0) = \E R(t)$ and $V(0) = \Var R(t)$. We then present the coverage achieved by the normal approximation of our Theorem \ref{thm:clt}, using the mean and variance calculated via numerical solution of the ODEs in Section \ref{sec:computation_mean_var}. {Finally, we compare the computational runtimes of the simulated empirical distribution approach to the ODE solving approach based on our normal approximation.}

To simplify our discussion of the practical implementation details relevant to the computation of $\E R(t)$ and $\Var R(t)$, we combine the three coupled ODEs of Section \ref{sec:computation_mean_var} into a single $(2d+1)$-dimensional vector valued ODE, namely, for $1 \le i \le n_0$ and $s \in (t_{i-1}, t_i]$,
\begin{equation} \label{eqn:f_ode}
    \dd s y(t_i-s) = f(t_i - s, y(t_i - s)),
\end{equation}
where we define $f : \R \times \R^{2d+1}$ according to (\ref{eqn:m_ode}, \ref{eqn:v_ode}, \ref{eqn:ode_V}) by
\begin{align*}
    f(t,y)_{1:d} &= \tilde r(t, y_1) + Q(t) y_{1:d}, \\
    f(t,y)_{d+1:2d} &= \varphi(t, y_1) + Q(t) y_{d+1:2d}, \\
    f(t,y)_{2d+1} &= \mu \varphi(t, y_{1:d}) + \mu Q(t) y_{d+1:2d} - 2 \mu m(t) (\mu \tilde r(t) + \mu Q(t) y_{1:d}),
\end{align*}
where, for convenience, we use the notation $x_{a:b}$ to refer to the vector in $\R^{b-a+1}$ formed by the $a$th through the $b$th entry of a vector $x \in \R^{2d+1}$. Furthermore, we have the boundary conditions
\begin{align}
    f(t_i, y)_{1:d} &= h(t_i) + y(t_i {\scriptstyle +})_{1:d}, \label{eqn:f_m1_boundary}\\
    f(t_i, y)_{d+1:2d} &= \tilde h(t_i) + y(t_i {\scriptstyle +})_{d+1:2d}, \label{eqn:f_m2_boundary} \\
    f(t_i, y)_{2d+1} &= \mu \tilde h(t_i) - (\mu h(t_i))^2 + y(t_i {\scriptstyle +})_{2d+1} \label{eqn:f_variance_boundary},
\end{align}
according to the ODEs for $m(\cdot)$, $v(\cdot)$, and $V(\cdot)$.
With this specification, we clearly have that for every $s \in [0,t]$ that $y(s)_{1:d} = m(s)$, $y(s)_{d+1:2d} = v(s)$, and $y(s)_{2d+1} = V(s)$. The system of ODEs \eqref{eqn:f_ode} does not admit a closed-form solution in general. We therefore seek a numerical approximation to the exact solution $m(0)$, $v(0)$, and $V(0)$. To this end, we consider several numerical ODE methods to solve for $m(\cdot)$, $v(\cdot)$, and $V(\cdot)$ simultaneously.

Classical global error bounds for $p$th order Runge-Kutta methods require $p$th order continuous differentiability in the function $f$; see, for example \cite[Thm.~3.2, p.~158]{hairer1987solving}.
To handle points of non-smoothness (e.g.~discontinuities in the derivatives $f^{(k)}$ for $k = 0,1,2,3,4$) in the problem data, we identify all points at which $f$ is non-smooth and add them to the collection of $t_i$'s from Section \ref{sec:computation_mean_var}. This divides the interval $[0,t]$ into sub-intervals $(t_i, t_{i+1})$, for $i = 1, \dots, n_0$, on each of which $f$ is smooth. For example, for a model with transition rates that are continuous piecewise linear functions of $t$, we would add each of the points at which the first-order derivative of the rates are discontinuous to the collection of $t_i$'s. We then solve the ODE \eqref{eqn:f_ode} on each sub-interval, and accumulate the results according to the boundary conditions defined in (\ref{eqn:f_m1_boundary}, \ref{eqn:f_m2_boundary}, \ref{eqn:f_variance_boundary}). Special care must be taken with regard to the value of the function $f$ at the endpoints of each sub-interval, however, as we will now examine.
Let $N_i \triangleq \lceil (t_{i+1} - t_i) \cdot h^{-1} \rceil$, and $h_i \triangleq (t_{i+1} - t_i) / N_i$. For each $i = 1, \dots, n_0$, let $((t_{ij}, t_{i,j+1}] : 1 \le j \le N_i)$ be the partition of $(t_i, t_{i+1}]$ into sub-intervals such that $t_{i,j+1} = t_{ij} + h_i$, $t_{i1} = t_i$, and $t_{i,N_i+1} = t_{i+1}$. 
For each sub-interval indexed by $k = N_i, N_i-1, \dots, 1$, we consider the classic fixed step size fourth-order Runge-Kutta iteration given by 
\begin{align*}
    F_1 &= \overset{\leftarrow}{f}(t_{ik} + h_i, y_{k+1}), \displaybreak[1] \\
    F_2 &= f(t_{ik} + \tfrac{1}{2} h_i, y_{k+1} - \tfrac{1}{2} h_i F_1), \displaybreak[1]  \\
    F_3 &= f(t_{ik} + \tfrac{1}{2} h_i, y_{k+1} - \tfrac{1}{2} h_i F_2), \displaybreak[1]  \\
    F_4 &= \overset{\rightarrow}{f}(t_{ik}, y_{k+1} - h_i F_3), \displaybreak[1] \\
    y_k &= y_{k+1} - \tfrac{1}{6} h_i (F_1 + 2 F_2 + 2 F_3 + F_4),
\end{align*}
where $\overset{\leftarrow}{f}$ and $\overset{\rightarrow}{f}$ correspond to the left- and right-continuous versions of $f$, respectively. This modified version of $f$ is then smooth on the sub-interval $[t_i, t_{i+1}]$.
For the first sub-problem, the initial point $y_{N_i + 1}$ is chosen to satisfy the boundary conditions of the ODEs for $m(\cdot)$, $v(\cdot)$, and $V(\cdot)$. At the right-most endpoint $t_{n_0} = t$, corresponding to the beginning of the backward integration, we set $y_{N_{n_0} + 1} = \mathbf{0} \in \mathbb R^{2d+1}$. We refer to this procedure as the \textit{discontinuity-aware}, fixed step size fourth-order Runge-Kutta method.
Figure \ref{fig:prendiville_ablation} below depicts the numerical error for the Prendiville model that results from naively using a discretization-unaware numerical ODE method which ignores the points at which $f$ is discontinuous when discretizing, relative to the performance of discretization-aware methods with the same step size. Unlike discretization-unaware methods, our discretization-aware approach typically exhibits monotonically decreasing error as the step size $h$ tends to zero.

Numerical ODE solving methods with adaptive step size are often superior to fixed step size methods in terms of the accuracy of the solutions obtained for a given computational effort. A discontinuity-aware, adaptive step size Runge-Kutta method may be implemented in much the same way as the fixed step size counterpart above: the times at which $f$ is discontinuous are added to the collection of $t_i$'s, and for each sub-interval $[t_i, t_{i+1}]$, the initial value problem is solved for a modification of $f$ that is smooth on the sub-interval. We use the Dormand-Prince Runge-Kutta 5(4) formulae of \cite{dormand1980family} to solve the initial value problem on each sub-interval in our numerical experiments below.

The numerical ODE solving methods we propose are implemented in Python at \texttt{\url{https://github.com/montefischer/markov_reward}}, along with code to simulate the rewards generated by the three models we consider. For the benefit of future research, we have designed our code to be easily extensible to new model types. The project README contains instructions for reproducing the numerical experiments whose results are reported below. All numerical experiments were carried out on a dedicated AMD EPYC 7502 processor (2.5 GHz base clock, 32 CPU cores) running CentOS Linux 7. Our Python (version 3.12) implementation depends on the standard numerical libraries NumPy (version 2.2.6) and SciPy (version 1.16.0) and uses double-precision floating-point arithmetic.

\paragraph{Prendiville model}
Consider the Prendiville model introduced in Remark \ref{remark:prendiville}. Let $S = \{0, 1, 2, \dots, 10\}$, corresponding to an ensemble of 10 two-state switching models. We specify rates and rewards to illustrate the flexibility of our theory.
We put 
\begin{equation}
    \begin{split}
        \lambda(t) &= 2 + \tfrac{1}{2} \sin(2\pi t), \\
        \mu(t) &= 3 - 2e^{-t / 4},
    \end{split}
\end{equation}
and take $Q(t)$ as given by \eqref{eqn:prendiville_Q}.
Clearly, \ref{a1}-\ref{a3} are satisfied by this specification of $(Q(t): t \ge 0)$.
Let $a(t) = 7t - \lfloor 7 t \rfloor$ be a one-periodic sawtooth function, and let $r(t,x) = x \cdot a(t) + 0.1$.
Transitions involving a jump from state $x$ to state $x+1$ generate deterministic reward $\GG(\cdot,x, x+1) \equiv 1$, while transitions involving a jump from state $x$ to state $x-1$ generate deterministic reward $\GG(\cdot,x,x-1) \equiv 5$. This deterministic jump reward structure verifies Assumption \ref{a7} for this model. 
Exogenous rewards arrive at periodic rate $\beta(t,\cdot) \equiv \frac{1}{4} (2 + \sin(2 \pi))$. The exogenous rewards are given by $\KK(\Lambda_i, x) \eqd 2 + 3 \sum_{j=1}^x Z_{kj} + 6 \sum_{j=x+1}^{10} Z_{kj}$, where $(Z_{kj} : k \ge 0, 1 \le j \le 10)$ is a family of iid random variables such that $Z_{11} \eqd \text{Beta}(2,5)$. Finally, at scheduled times $t_i = 5i$, a state-dependent reward of $\HH(t_i, x) = x$ is accrued. This reward specification clearly satisfies \ref{a4}, and our choice of scheduled times $(t_i : i \ge 0)$ satisfies the requirement \ref{a5}. 

We simulate the model over the interval $t \in [0,256]$, and report the simulation-based computation of $\p(R(t) \le z)$, as compared to the CLT-based approximation to the same probability. We implement this by choosing $z_1$, $z_2$, $z_3$, $z_4$ so that $\p(\mathcal N(\E R(t), \Var R(t)) \le z_i) = p_i$, where $p_1 = 0.01$, $p_2 = 0.1$, $p_3 = 0.9$, and $p_4 = 0.99$. If the normal approximation is accurate, the simulation-based estimates of $\p(R(t) \le z_i)$ should be close to $p_i$ for $i=1,2,3,4$. These results are reported in Table \ref{tab:coverage_two_state}, also as a function of $t$. As expected, the simulation results confirm that the approximation becomes excellent as $t$ increases. The computational cost of solving ODEs to high accuracy is highly competitive with simulation. Parallelizing simulations across 32 cores, it took 21.5 minutes to generate the 10,000 sample paths in our computational environment, while solving ODEs to obtain $\E R(768)$ and $\Var R(768)$ in the same computational environment took only 70 seconds. 

Additionally, we plot relative error convergence results in Figure \ref{fig:prendiville_error_convergence} for discontinuity-aware fixed-step size Runge-Kutta methods of orders 2 and 4, alongside the discontinuity-aware first-order Euler method, for computing the quantity $\E R(1)$. 
It is well-known that the global error for a $p$th order Runge-Kutta method with constant step size $h$ theoretically scales as $h^p$. Figure \ref{fig:prendiville_error_convergence} shows that our discontinuity-aware methods closely follow the convergence rate predicted by theory for $p = 1$, 2, and 4.

In order to show the sensitivity of numerical ODE solving to the correct treatment of discontinuities in the function $f$, we compare the relative numerical error obtained by Runge-Kutta methods with and without an explicit treatment of the discontinuities in Figure \ref{fig:prendiville_ablation}. The fixed step size method in the figure implements the fourth-order Runge Kutta method explained above for the step size $h$, while the adaptive step size method implements the formulas of \cite{dormand1980family}, subject to the condition that the maximum allowed step size is $h$. The vertical axis plots the relative numerical error on log scale. The asymptotic behavior around a relative error of $10^{-14}$ indicates the limit of floating point precision. The importance of implementing a discontinuity-aware method is apparent from the figure.

\begin{figure}[H]
    \centering
    \includegraphics[width=0.8\linewidth]{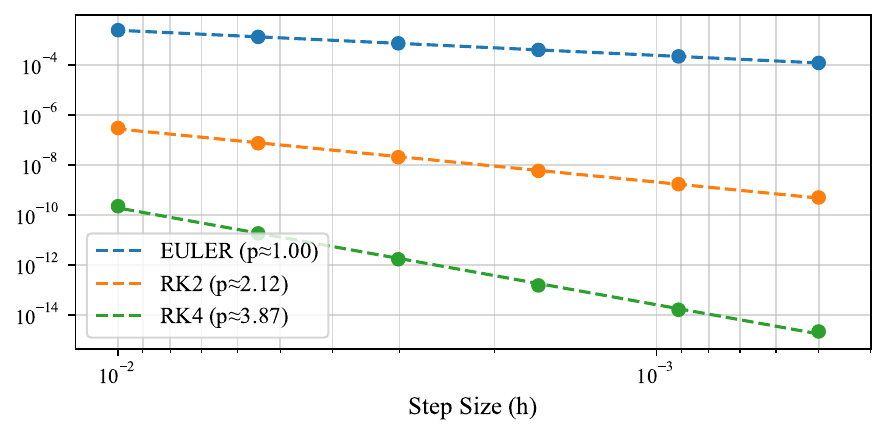}
    \caption{Error convergence plots for varying orders of numerical ODE methods for solving the ODEs for $\E R(1)$ in the Prendiville model. Dashed lines indicate linear least-squares regression lines on the log-log scale; the slopes reported in the legend, indicate the estimated order of convergence. The vertical axis plots the relative numerical error on log scale.}
    \label{fig:prendiville_error_convergence}
\end{figure}

\begin{figure}[H]
    \centering
    \includegraphics[width=0.8\linewidth]{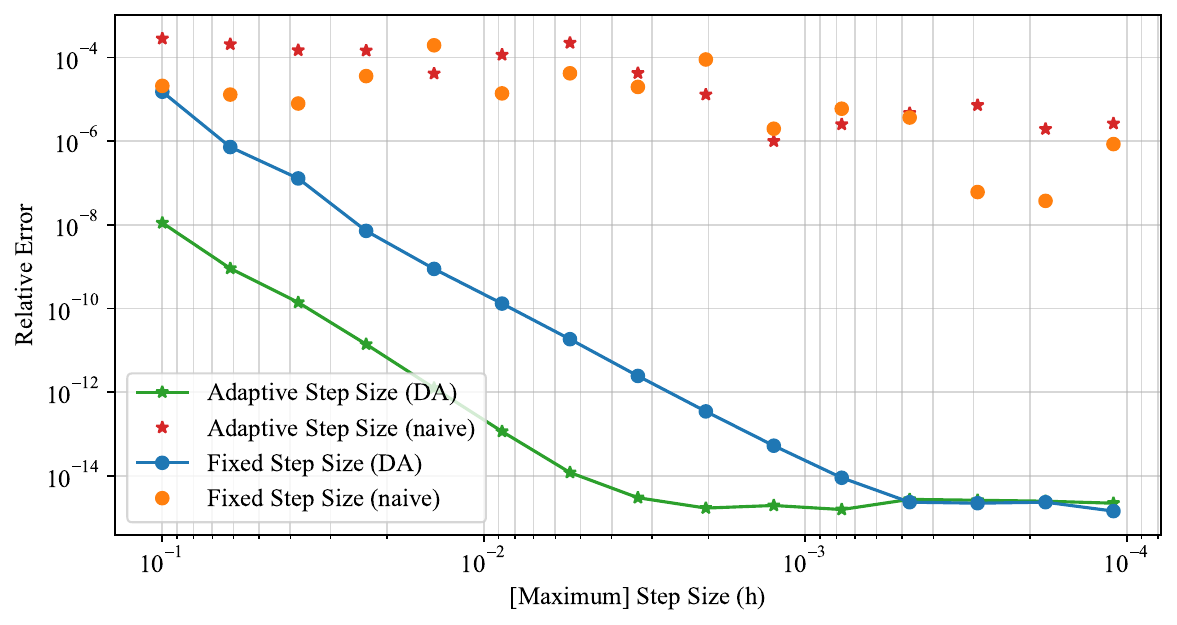}
    \caption{Comparison of Runge-Kutta methods for solving the Prendiville ODEs for $\E\,R(1)$ with and without adding the discontinuities of $\tilde r$ to the collection of $t_i$'s. The label (DA) indicates a discontinuity-aware method, whereas the label (naive) indicates a discontinuity-unaware method.}
    \label{fig:prendiville_ablation}
\end{figure}

\paragraph{Single-server queue}
We now consider the time-varying, single-server Markovian queue with finite capacity equal to 30 ($M_t / M_t / 1 / 30$ in Kendall's notation). The state space $S = \{0, 1, \dots, 30\}$ is such that the integer value of the state $x$ corresponds to the number of jobs in the system, whether in service or in the queue. 
A maximum of $30$ jobs are allowed in the system at any time. To define $Q(t)$, it suffices to specify the \textit{arrival rate} $\lambda(\cdot)$ and \textit{service rate} $\mu(\cdot)$. Let 
\begin{equation}
    \label{eqn:mm_lambda_mu}
    \begin{split}
    \lambda(t) &= 12 + 10\, \sin(\pi t), \\
    \mu(t) &= 25 + 10\, \sin(\tfrac\pi3 (t - 1/4)).
    \end{split}
\end{equation}
This specification clearly satisfies \ref{a1}-\ref{a3}. 
These dynamics describe a queueing system subject to sinusoidal arrival rate, with a sinusoidal service rate such that the model experiences periodic overloading, that is, intervals of time for which $\lambda(t) > \mu(t)$. Nonstationary queueing models with sinusoidal rates have been studied in previous work, e.g.~\cite{green1991pointwise}. 
We plot $\lambda(t)$ and $\mu(t)$ in Figure \ref{fig:mm1_queue_rates}.

\begin{figure}[H]
    \includegraphics[width=\linewidth]{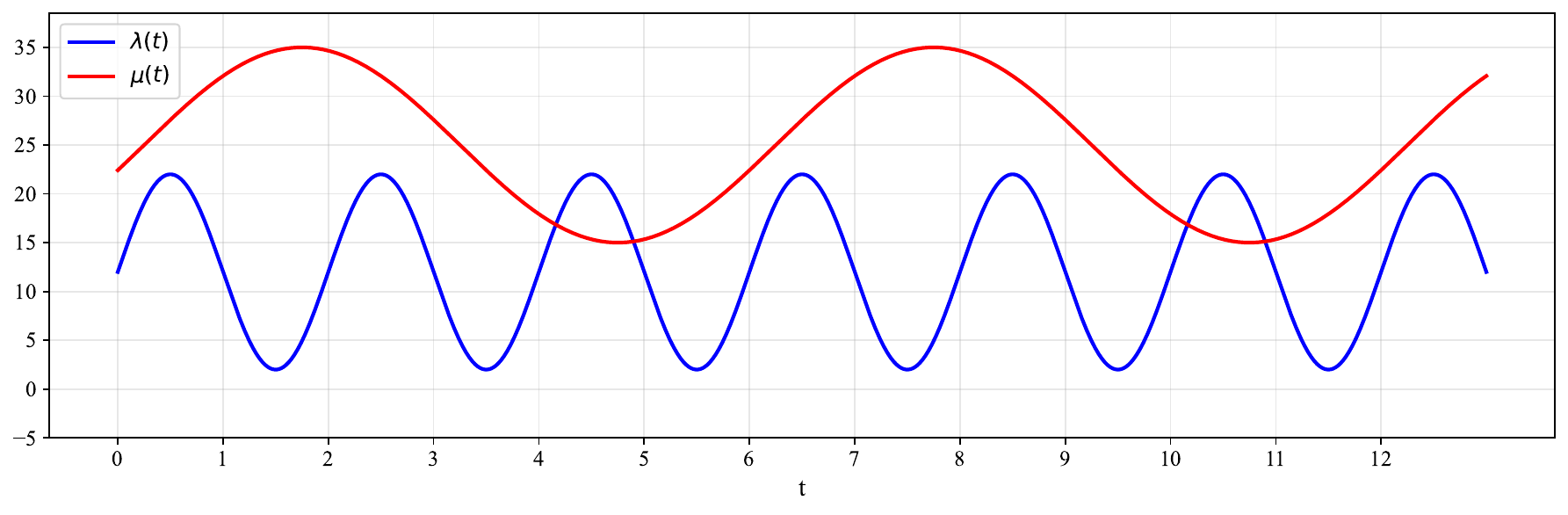}
    \caption{Rates $\lambda(t)$ and $\mu(t)$ for the $M_t / M_t / 1 / 30$ queue.}
    \label{fig:mm1_queue_rates}
\end{figure}

We specify the reward structure to correspond to a standard performance measure for queues. With $r(\cdot, x) \equiv x+1$, and all other rewards uniformly equal to zero, $R(t)/t-1$ gives the \textit{time-average number in system} over the interval $[0,t]$.

We simulate this model over the interval $t \in [0,1536]$. We simulate from an initial distribution given by a truncated geometric distribution with mean $\lambda(0) / \mu(0) \approx 0.54$, which by equation \eqref{eqn:mm_lambda_mu} approximates the stationary distribution of a time-homogeneous M/M/1 queue with arrival rate $\lambda(0)$ and service rate $\mu(0)$. In the course of running our numerical experiments, we observed that this approach, although heuristic, better illustrates our CLT result by shortening the transient period before CLT mixing is achieved relative to an initial distribution that is concentrated at the empty state. We report the comparison between the simulation-based computation of $\p(R(t) \le z)$ and the CLT-based approximation to the same probability in Table \ref{tab:coverage_mm1k}.
We note that the CLT approximation is highly accurate for large $t$. 
 Parallelizing simulations across 32 cores, generating the 10,000 sample paths took 70 minutes using our computational setup, while solving ODEs to obtain $\E R(768)$ and $\Var R(768)$ on the same processor took under 10 minutes.

\paragraph{Multi-server queue} 
In call center shift management, it can be of interest to consider multi-server queueing models that capture the degradation of agent performance over the course of a shift, with service rates recovering when the next scheduled shift arrives to refresh personnel. To this end, consider a multi-server queue with finite capacity equal to 80. For this example, we take one time unit to represent one 8 hour shift, so that the interval $[0,3]$ represents a duration of 24 hours. The transition rates $(Q(t) : t \ge 0)$ are given by a finite-capacity multi-server queue with arrival rate $\lambda(\cdot)$, and service rates $\mu(\cdot)$. We specify the arrival rate by 
\begin{equation} \label{eqn:multiserver_rates}
    \lambda(t) = 35 + 10 \cos\big(\tfrac{2\pi}{3} t\big) + 10 \cos\big(\tfrac{4\pi}{3} (t + \tfrac{3}{8})\big) + \min\{t, 36\}.
\end{equation}
This choice of $\lambda$ specifies a baseline demand subject to periodic fluctuations and a piecewise linear secular trend of 3 additional arrivals per day, which ceases at the end of the twelfth day ($t=36$). A secular trend of this form may result, for example, from a corporate merger wherein traffic from a separate call center is gradually redirected to the call center being modeled, or as the result of a forecasted increase in demand from a marketing campaign. We plot the arrival rate $\lambda(t)$ in Figure \ref{fig:multiserver_rates}. We choose per-server service rates to be
\begin{equation}
    \mu(t) = 4 - \frac{x - \lfloor x \rfloor}{3},
\end{equation} 
which specifies a linear drop in server performance over the course of each shift from an initial rate of 4 customers served per shift to a final rate of $3 \tfrac23$ served per shift. Finally, we vary the number of servers on duty by shift. The first shift has 30 servers, the second has 20, and the third shift has 25. This pattern repeats for every group of three shifts. The arrival and service rates for this model are such that at some periods of time, the queue is overloaded. We do not include abandonment effects in this model. The total effective service rate is plotted alongside $\lambda(t)$ in Figure 
\ref{fig:multiserver_rates}.
It is straightforward that \ref{a1}-\ref{a3} are verified for this choice of rates.

\begin{figure}[H]
    \includegraphics[width=\linewidth]{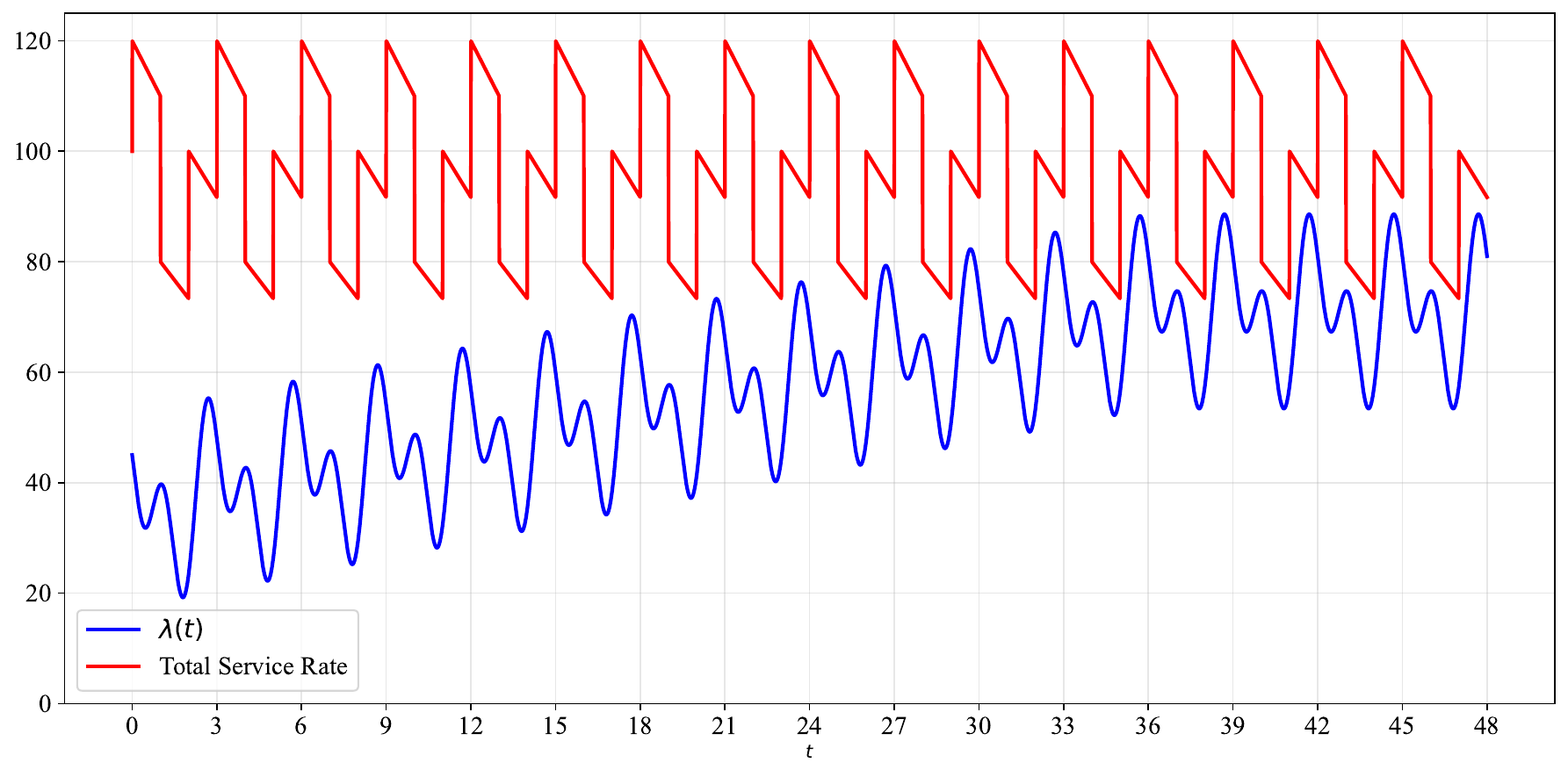}
    \caption{Arrival rate $\lambda(t)$ and total service rate for the call center multi-server queue. Each time unit corresponds to one 8-hour shift, so that e.g.~$t=3$ gives the arrival and total service rate at the end of the first 24 hours.}
    \label{fig:multiserver_rates}
\end{figure}

As with the single-server queue, we specify the reward structure to give $R(t)$ a queueing-theoretic interpretation. With $r(\cdot, x) \equiv x+1$, and all other rewards set to zero, $R(t)/t-1$ gives the time-average number in system over the interval $[0,t]$. Our assumption \ref{a4} is verified by this choice of reward; as above, we may set $t_i = i$ to satisfy \ref{a5}.

We simulate the system over the course of 256 days ($t \in [0, 768]$). Just as for the single-server queue, we simulate starting from an initial distribution corresponding to the stationary $M/M/k$ queue with arrival rate $\lambda(0)$ and service rates $\mu(0)$. We report our coverage results in Table \ref{tab:coverage_callcenter}. Parallelizing simulations across 32 cores, it took 3.6 hours to generate the 10,000 sample paths, while solving ODEs to obtain $\E R(768)$ and $\Var R(768)$ on the same processor took under 20 minutes.

These simulation studies illustrate that the approximation $R(t) \overset{D}{\approx} \mathcal N(\E R(t), \Var R(t))$ implied by Theorem \ref{thm:clt} takes effect over reasonable time scales, across a variety of non-stationary stochastic modeling scenarios. {Furthermore, comparison of the simulation cost versus the ODE solving cost in Tables \ref{tab:coverage_two_state}, \ref{tab:coverage_mm1k}, and \ref{tab:coverage_callcenter}, reveals that solving the numerical ODEs is quite computationally competitive with simulating for several models of interest.
}

\section*{Acknowledgments}
We thank the reviewers for their constructive feedback which helped to greatly enhance the presentation of this work. Some of the computing for this project was performed on the Sherlock cluster. We would like to thank Stanford University and the Stanford Research Computing Center for providing computational resources and support that contributed to these research results.


\begin{landscape}
\begin{table}[ht]
\centering
\resizebox{1.5\textwidth}{!}{%
\begin{tabular}{lcccccccc}
 & $t$ & \multicolumn{7}{c}{} \\
\cmidrule(lr){2-9}
 & 0.0625 & 0.125 & 0.25 & 1 & 4 & 16 & 64 & 256 \\
\midrule
\multicolumn{9}{l}{\textbf{Coverage probabilities } $\hat F_{t,p} = \p(R(t) \le x_{t,p})$} \\
\cmidrule(lr){1-9}
0.01 & $0.0000 \pm 0.0000$ & $0.0000 \pm 0.0000$ & $0.0000 \pm 0.0000$ & $0.0000 \pm 0.0000$ & $0.0013 \pm 0.0007$ & $0.0066 \pm 0.0016$ & $0.0075 \pm 0.0017$ & $0.0096 \pm 0.0019$ \\
0.05 & $0.0000 \pm 0.0000$ & $0.0000 \pm 0.0000$ & $0.0000 \pm 0.0000$ & $0.0013 \pm 0.0007$ & $0.0312 \pm 0.0035$ & $0.0424 \pm 0.0040$ & $0.0443 \pm 0.0041$ & $0.0530 \pm 0.0045$ \\
0.50 & $0.5914 \pm 0.0151$ & $0.6973 \pm 0.0164$ & $0.6991 \pm 0.0164$ & $0.5527 \pm 0.0146$ & $0.5283 \pm 0.0142$ & $0.5173 \pm 0.0141$ & $0.5051 \pm 0.0139$ & $0.4951 \pm 0.0138$ \\
0.95 & $0.9375 \pm 0.0190$ & $0.8822 \pm 0.0184$ & $0.9314 \pm 0.0189$ & $0.9254 \pm 0.0189$ & $0.9383 \pm 0.0190$ & $0.9457 \pm 0.0191$ & $0.9491 \pm 0.0191$ & $0.9496 \pm 0.0191$ \\
0.99 & $0.9375 \pm 0.0190$ & $0.9397 \pm 0.0190$ & $0.9729 \pm 0.0193$ & $0.9747 \pm 0.0194$ & $0.9827 \pm 0.0194$ & $0.9861 \pm 0.0195$ & $0.9868 \pm 0.0195$ & $0.9896 \pm 0.0195$ \\
\midrule
\multicolumn{9}{l}{\textbf{Computational cost} (s)} \\ 
\cmidrule(lr){1-9}
MC & $0.00118 \pm 0.00003$  & $0.00216 \pm 0.00003$  & $0.00413 \pm 0.00004$  & $0.01503 \pm 0.00006$  & $0.06071 \pm 0.00016$  & $0.25288 \pm 0.00039$  & $1.02945 \pm 0.00090$  & $4.14518 \pm 0.00227$  \\
ODE & $0.14 \pm 0.02$ & $0.19 \pm 0.01$ & $0.32 \pm 0.01$ & $0.62 \pm 0.01$ & $1.98 \pm 0.06$ & $6.60 \pm 0.42$ & $22.51 \pm 3.19$ & $70.01 \pm 3.20$ \\
\bottomrule
\end{tabular}
}
\caption{Tabulation of the quantities $\hat F_{t,p} \triangleq \p(R(t) \le x_{t,p})$ for the Prendiville model, computed by 10,000 sample Monte Carlo study, where $x_{t,p}$ is such that $\p(\mathcal N(\E R(t), \Var R(t)) \le x_{t,p}) = p$. The moments $\E R(t)$ and $\Var R(t)$ are computed by numerically solving the ODEs of Section \ref{sec:computation_mean_var} using an adaptive step size with tight relative ($10^{-11}$) and absolute ($10^{-13}$) error tolerances. Simulation cost (MC) is reported per sample. All sample quantities are reported as 95\% asymptotic confidence intervals $\hat \mu \pm 1.96 \hat \sigma / n^{1/2}$, where $\hat \mu$ is the sample mean and $\hat \sigma$ is the sample standard deviation of the reported quantity, and $n$ the number of samples. Estimates for the computational cost of ODE solving were obtained using $n=10$ solves.} 
\label{tab:coverage_two_state}
\end{table}


\begin{table}[ht]
\centering
\resizebox{1.5\textwidth}{!}{%
\begin{tabular}{lccccccc}
 & $t$ & \multicolumn{6}{c}{} \\
\cmidrule(lr){2-8}
 & 0.375 & 1.5 & 6 & 24 & 96 & 384 & 1536 \\
\midrule
\multicolumn{8}{l}{\textbf{Coverage probabilities } $\hat F_{t,p} = \p(R(t) \le x_{t,p})$} \\
\cmidrule(lr){1-8}
0.01 & $0.0000 \pm 0.0000$ & $0.0000 \pm 0.0000$ & $0.0000 \pm 0.0000$ & $0.0000 \pm 0.0000$ & $0.0021 \pm 0.0009$ & $0.0067 \pm 0.0016$ & $0.0075 \pm 0.0017$ \\
0.05 & $0.0000 \pm 0.0000$ & $0.0000 \pm 0.0000$ & $0.0000 \pm 0.0000$ & $0.0189 \pm 0.0027$ & $0.0343 \pm 0.0036$ & $0.0434 \pm 0.0041$ & $0.0478 \pm 0.0043$ \\
0.50 & $0.6393 \pm 0.0157$ & $0.6452 \pm 0.0157$ & $0.5848 \pm 0.0150$ & $0.5587 \pm 0.0147$ & $0.5345 \pm 0.0143$ & $0.5204 \pm 0.0141$ & $0.5076 \pm 0.0140$ \\
0.95 & $0.9268 \pm 0.0189$ & $0.9342 \pm 0.0189$ & $0.9267 \pm 0.0189$ & $0.9351 \pm 0.0190$ & $0.9415 \pm 0.0190$ & $0.9447 \pm 0.0191$ & $0.9481 \pm 0.0191$ \\
0.99 & $0.9617 \pm 0.0192$ & $0.9606 \pm 0.0192$ & $0.9691 \pm 0.0193$ & $0.9764 \pm 0.0194$ & $0.9820 \pm 0.0194$ & $0.9857 \pm 0.0195$ & $0.9896 \pm 0.0195$ \\

\midrule
\multicolumn{8}{l}{\textbf{Computational cost} (s)} \\ 
\cmidrule(lr){1-8}
MC & $0.00458 \pm 0.00005$  & $0.01581 \pm 0.00010$  & $0.05279 \pm 0.00018$  & $0.20970 \pm 0.00037$  & $0.83532 \pm 0.00077$  & $3.34209 \pm 0.00196$  & $13.37192 \pm 0.00551$  \\
ODE & $0.75 \pm 0.25$ & $2.00 \pm 0.33$ & $5.63 \pm 0.77$ & $17.76 \pm 0.69$ & $58.55 \pm 3.50$ & $184.61 \pm 5.23$ & $574.33 \pm 3.51$ \\
\bottomrule
\end{tabular}
}
\caption{Tabulation of the quantities $\hat F_{t,p} \triangleq \p(R(t) \le x_{t,p})$ and $x_{t,p}$ for $M_t / M_t / 1 / k$ model computed by 10,000 sample Monte Carlo study, where $x_{t,p}$ is such that $\p(\mathcal N(\E R(t), \Var R(t)) \le x_{t,p}) = p$. The moments $\E R(t)$ and $\Var R(t)$ are computed by numerically solving the ODEs of Section \ref{sec:computation_mean_var} using an adaptive step size with tight relative ($10^{-11}$) and absolute ($10^{-13}$) error tolerances. Simulation cost (MC) is reported per sample. All sample quantities are reported as 95\% asymptotic confidence intervals $\hat \mu \pm 1.96 \hat \sigma / n^{1/2}$, where $\hat \mu$ is the sample mean and $\hat \sigma$ is the sample standard deviation of the reported quantity, and $n$ the number of samples. Estimates for the computational cost of ODE solving were obtained using $n=10$ solves.}
\label{tab:coverage_mm1k}
\end{table}


\begin{table}[ht]
\centering
\resizebox{1.5\textwidth}{!}{%
\begin{tabular}{lccccccc}
 & $t$ & \multicolumn{5}{c}{} \\
\cmidrule(lr){2-7}
& 0.75 & 3 & 12 & 48 & 192 & 768 \\
\midrule
\multicolumn{7}{l}{\textbf{Coverage probabilities } $\hat F_{t,p} = \p(R(t) \le x_{t,p})$} \\
\cmidrule(lr){1-7}
0.01 & $0.0029 \pm 0.0011$ & $0.0055 \pm 0.0015$ & $0.0073 \pm 0.0017$ & $0.0059 \pm 0.0015$ & $0.0080 \pm 0.0018$ & $0.0082 \pm 0.0018$ \\
0.05 & $0.0393 \pm 0.0039$ & $0.0420 \pm 0.0040$ & $0.0523 \pm 0.0045$ & $0.0429 \pm 0.0041$ & $0.0437 \pm 0.0041$ & $0.0495 \pm 0.0044$ \\
0.50 & $0.5208 \pm 0.0141$ & $0.5110 \pm 0.0140$ & $0.5003 \pm 0.0139$ & $0.5184 \pm 0.0141$ & $0.5054 \pm 0.0139$ & $0.5078 \pm 0.0140$ \\
0.95 & $0.9423 \pm 0.0190$ & $0.9489 \pm 0.0191$ & $0.9492 \pm 0.0191$ & $0.9442 \pm 0.0190$ & $0.9502 \pm 0.0191$ & $0.9522 \pm 0.0191$ \\
0.99 & $0.9842 \pm 0.0194$ & $0.9864 \pm 0.0195$ & $0.9890 \pm 0.0195$ & $0.9834 \pm 0.0194$ & $0.9888 \pm 0.0195$ & $0.9889 \pm 0.0195$ \\
\midrule
\multicolumn{7}{l}{\textbf{Computational cost} (s)} \\ 
\cmidrule(lr){1-7}
MC & $0.02028 \pm 0.00009$  & $0.08872 \pm 0.00022$  & $0.40129 \pm 0.00051$  & $2.14621 \pm 0.00141$  & $9.90459 \pm 0.00435$  & $40.93899 \pm 0.01232$  \\
ODE & $2.36 \pm 0.08$ & $7.44 \pm 0.06$ & $26.77 \pm 0.60$ & $90.43 \pm 1.41$ & $317.40 \pm 4.11$ & $1121.67 \pm 10.80$ \\

\bottomrule
\end{tabular}
}
\caption{Tabulation of the quantities $\hat F_{t,p} \triangleq \p(R(t) \le x_{t,p})$ and $x_{t,p}$ for call center model computed by 10,000 sample Monte Carlo study, where $x_{t,p}$ is such that $\p(\mathcal N(\E R(t), \Var R(t)) \le x_{t,p}) = p$. The moments $\E R(t)$ and $\Var R(t)$ are computed by numerically solving the ODEs of Section \ref{sec:computation_mean_var} using an adaptive step size with tight relative ($10^{-11}$) and absolute ($10^{-13}$) error tolerances. Simulation cost (MC) is reported per sample. All sample quantities are reported as 95\% asymptotic confidence intervals $\hat \mu \pm 1.96 \hat \sigma / n^{1/2}$, where $\hat \mu$ is the sample mean and $\hat \sigma$ is the sample standard deviation of the reported quantity, and $n$ the number of samples. Estimates for the computational cost of ODE solving were obtained using $n=10$ solves.} 
\label{tab:coverage_callcenter}
\end{table}
\end{landscape}

\printbibliography

\end{document}